\documentclass[pdflatex,sn-mathphys-num]{sn-jnl}

\usepackage{graphicx}%
\usepackage{multirow}%
\usepackage{amsmath,amssymb,amsfonts}%
\usepackage{amsthm}%
\usepackage{mathrsfs}%
\usepackage[title]{appendix}%
\usepackage{xcolor}%
\usepackage{textcomp}%
\usepackage{manyfoot}%
\usepackage{booktabs}%
\usepackage{algorithm}%
\usepackage{algorithmicx}%
\usepackage{algpseudocode}%
\usepackage{listings}%
\usepackage{mathtools}
\usepackage{microtype}
\usepackage{booktabs}
\usepackage{epstopdf}
\usepackage{subcaption}
\usepackage{array}
\usepackage{diagbox}
\usepackage{verbatim}
\usepackage{amsmath}
\usepackage{amssymb}
\usepackage{amsfonts}
\usepackage{bm}
\usepackage{amsthm}
\usepackage{graphicx}
\usepackage{float}
\newtheorem{assumption}{Assumption}
\newtheorem{lemma}{Lemma}

\theoremstyle{thmstyleone}%
\newtheorem{theorem}{Theorem}
\theoremstyle{thmstyletwo}%
\newtheorem{example}{Example}%
\newtheorem{remark}{Remark}%

\theoremstyle{thmstylethree}%

\begin{document}

\title[Deep neural network yields regularization for ill-posed inverse problems]{Deep neural network yields regularization for ill-posed inverse problems}


\author[1]{\fnm{Qiao} \sur{Zhu}}\email{qzhu@bit.edu.cn}

\author[1]{\fnm{Lan} \sur{Wang}}\email{wanglan@bit.edu.cn}

\author*[1,2]{\fnm{Ye} \sur{Zhang}}\email{ye.zhang@smbu.edu.cn}

\affil[1]{\orgdiv{School of Mathematics and Statistics}, 
\orgname{Beijing Institute of Technology}, 
\orgaddress{\city{Beijing}, \postcode{100081}, \country{China}}}

\affil[2]{\orgdiv{MSU-BIT-SMBU Joint Research Center of Applied Mathematics}, \orgname{Shenzhen MSU-BIT University}, \orgaddress{\city{Shenzhen}, \postcode{518172}, \country{China}}}


\abstract{This paper studies the regularization of ill-posed inverse problems by deep neural networks (DNNs). We extend architecture-based regularization from shallow networks to deep models by developing a deterministic framework in which the admissible network class is enlarged adaptively and the resulting architecture complexity acts as the regularization mechanism. We propose two discrepancy-principle-driven expanding DNN algorithms to treat the cases where an explicit parameter-radius bound is available and unavailable, respectively. For both algorithms, we prove the finite termination of the adaptive expansion procedure and the convergence of the regularized solutions as the noise level vanishes. In addition, we derive explicit asymptotic bounds on the terminal network architecture, thereby quantifying how the required network complexity scales with the noise level. Numerical experiments on several representative linear and non-linear inverse problems support the theoretical findings and illustrate the practical usefulness of the proposed framework.}

\keywords{Deep neural networks, universal approximation theorems, ill-posed problems, convergence analysis, iterative regularization}


\pacs[MSC Classification]{47A52, 47J06, 65J20, 65J22, 68T07}

\maketitle

\section{Introduction}\label{sec:Introduction}

Inverse problems arise in a wide range of scientific and engineering applications, including medical imaging, non-destructive testing, and the calibration of complex physical systems \cite{engl1996regularization, isakov2006inverse, schuster2012regularization}. The primary mathematical objective is to recover an unknown target quantity $f^\dagger \in \mathcal{F} \subset \mathcal{X}_A$ from indirect observations. The underlying physical process is typically modeled by a forward operator $A:\mathcal{X}_A \to \mathcal{Y}$ between Banach spaces, leading to the operator equation
\begin{equation}\label{eq:operator_equation}
A(f)=g,
\end{equation}
where $g=A(f^\dagger)$ denotes the ideal noise-free data. In practical applications, however, the exact data $g$ is unavailable. Instead, one is given a noisy measurement $g^\delta \in \mathcal{Y}$ satisfying the deterministic noise model $\|g^\delta-g\|_{\mathcal Y}\le \delta$, where $\delta>0$ is the noise level. A defining feature of inverse problems is their ill-posedness in the sense of Hadamard: the inverse mapping $A^{-1}$ is typically discontinuous, so that small perturbations in the data may lead to large deviations in the reconstruction. Stable recovery therefore requires regularization. Classical frameworks restore well-posedness by incorporating prior knowledge via variational penalties \cite{tikhonov1977solutions, ito2014inverse}, iterative procedures \cite{gong2020new, jin2023convergence, zhang2023stochastic, jin2025regularizing}, or finite-dimensional projection methods \cite{natterer2001mathematics}.

In recent years, deep learning has led to major advances in inverse problems; see, for example, the broad overviews in \cite{mccann2017convolutional, arridge2019solving, ongie2020deep, scarlett2023theoretical}. A large part of this literature focuses on supervised and model-based deep reconstruction methods \cite{jin2017deep}, including U-Net-type architectures \cite{ronneberger2015u}, learned primal-dual schemes \cite{adler2018learned}, variational networks \cite{hammernik2018learning}, and generative-model-based approaches \cite{bora2017compressed, mardani2018deep}, all of which have demonstrated strong empirical performance in imaging applications. At the same time, the mathematical limitations of purely data-driven reconstruction pipelines have become increasingly apparent, especially with regard to instability and sensitivity to perturbations \cite{antun2020instabilities}. This has motivated another line of research that connects deep learning more directly to classical regularization theory. In this setting, neural networks define learned regularizers or data-adapted priors within variational formulations, as in the NETT framework \cite{li2020nett}, adversarial regularization \cite{lunz2018adversarial}, learned variational penalties \cite{lunz2022learned}, and total deep variation \cite{kobler2020total}. Related studies also investigate the regularizing properties of modern learning and optimization procedures for inverse problems \cite{jin2020convergence, long2024accelerated}. In parallel, physics-informed neural networks (PINNs) provide a complementary paradigm by encoding governing equations directly into the training objective \cite{raissi2019physics, karniadakis2021physics}.
Particularly relevant to the present work is the recent interest in untrained neural priors and architecture-induced regularization, which highlights the possibility that network architecture itself may contribute to the regularization of inverse problems. Starting from the Deep Image Prior (DIP) paradigm \cite{ulyanov2018deep}, a growing literature has shown that a randomly initialized network, without being
pretrained on external data, can be optimized directly against a single degraded observation and thereby act as an implicit architectural prior. Representative viewpoints include concise untrained decoders such as the Deep Decoder \cite{heckel2019deep}, regularization by architecture for inverse problems \cite{dittmer2020regularization}, deterministic recovery guarantees for unsupervised neural approaches \cite{buskulic2024convergence}, and principled stopping strategies for deep image priors \cite{wang2023early}. See also the recent survey \cite{qayyum2022untrained} for a broader overview of untrained neural priors in inverse imaging. These developments suggest that neural network architecture is not merely an implementation choice, but may itself act as a regularizing ingredient. This observation raises a natural question: rather than learning the regularization functional or relying on heuristically fixed network sizes, can one develop a deterministic regularization framework in which the admissible network architecture class is expanded adaptively and selected by a discrepancy principle?

To utilize network architecture as a regularization parameter, one must first guarantee that neural networks possess the requisite approximation capacity to represent the unknown true solution $f^\dagger$ as their structural complexity increases. This foundational assurance is provided by modern neural network approximation theory. Classical results established universal approximation for shallow networks with a single hidden layer \cite{cybenko1989approximation, hornik1989multilayer, hornik1991approximation, stinchcombe1989universal, pinkus1999approximation}, while quantitative rates were obtained for Barron-type function classes under spectral assumptions \cite{barron2002universal, bach2017breaking, li2024two}. In the deep setting, rigorous analyses have demonstrated genuine advantages of depth \cite{eldan2016power, telgarsky2016benefits, mhaskar2016deep, poggio2017and, lu2017expressive}, and quantitative approximation results for ReLU networks are now available for H\"older/continuous classes \cite{yarotsky2017error, yarotsky2018optimal, shen2019nonlinear, shen2020deep, shen2022optimal}, smooth and Sobolev-type classes \cite{yarotsky2020phase, guhring2020error, lu2021deep, hon2022simultaneous}, and piecewise smooth functions \cite{petersen2018optimal}. In addition, some recent works provide explicit parameter-constrained approximation results for deep networks, including norm-constrained ReLU networks \cite{jiao2023approximation} and constructive Kolmogorov-type deep representations \cite{schmidt2021kolmogorov}. Altogether, these results confirm that width and depth directly govern the approximation capacity available for representing the unknown solution, making them natural candidates for structural regularization parameters.

Despite this flourishing landscape of empirical success and approximation theory, an important theoretical gap remains in the context of inverse problems: \textit{how can the architectural complexity of a deep neural network be rigorously integrated into the mathematical framework of regularization theory?} In our previous work \cite{wang2025shallow}, we addressed this question in the shallow-network setting and proved that the width of a single-hidden-layer network can serve as a standalone regularization parameter selected adaptively by a discrepancy principle.

However, extending the architecture-based regularization philosophy from shallow to deep networks introduces substantial mathematical challenges. First, unlike the shallow case, the approximation capacity of deep architectures is governed jointly by width and depth, so that a stable regularization path must account for the coupled expansion of these two structural dimensions. Second, deterministic regularization theory relies on compactness to ensure existence and convergence of regularized solutions, which in turn requires optimization over hypothesis classes with explicitly bounded parameters. Although a few recent approximation results provide explicit parameter constraints, the vast majority of deep-network approximation theorems---particularly in general Sobolev-type settings---still provide error rates mainly in terms of width and depth, without yielding an explicit usable bound on the parameter radius. This disconnect prevents the direct construction of constrained network classes and poses a major theoretical obstacle to deep-network regularization.

The central objective of this paper is to bridge this gap by establishing a regularization theory for \textit{expanding deep neural networks} in an unsupervised, training-data-free setting, in the sense that the method uses only the given noisy observation $g^\delta$ and does not require external training datasets or ground-truth solution labels. Rather than relying on fixed architectures, we adaptively enlarge the admissible network class, allowing the structural complexity of the network to act as the regularization mechanism. The main contributions of this paper are summarized as follows:

\begin{itemize}
    \item We formulate a novel regularization framework for general ill-posed inverse problems in which the unknown solution is represented by a deep neural network and its architectural complexity---primarily governed by the joint scaling of width and depth---serves as the central regularizing mechanism.

    \item We develop two discrepancy-driven expanding algorithms corresponding to the cases where an explicit parameter-radius bound is available and unavailable, respectively.

    \item We prove the existence of minimizers and the finite termination of the adaptive expansion procedure. Furthermore, we establish the convergence of the regularized solutions as the noise level vanishes and derive explicit asymptotic bounds on the terminal network architecture.
\end{itemize}

In this way, the paper places the relationship between network complexity, approximation accuracy, and stability into a rigorous regularization framework, thereby extending the shallow-network theory in \cite{wang2025shallow} to deep architectures.

The remainder of this paper is organized as follows. Section~\ref{sec:Preliminaries} introduces the notation, the class of deep neural networks with bounded parameters, and the abstract approximation properties. Section~\ref{sec:framework} presents the expanding deep neural network regularization algorithms and establishes their well-posedness, termination, and convergence properties. Section~\ref{sec:simulation} reports numerical experiments on several representative inverse problems. Finally, Section~\ref{sec:conclusion} concludes the paper.

\section{Preliminaries}\label{sec:Preliminaries}
We introduce the notation, the class of deep neural networks with bounded parameters, and the abstract approximation property used in the subsequent analysis.

\subsection{Notation}

Throughout this paper, $\mathbb{N}^+$ and $\mathbb{N}$ denote the sets of positive and non-negative integers, respectively. For two quantities $X$ and $Y$, we write $X \vee Y := \max\{X,Y\}$. We use $X \lesssim Y$ (equivalently, $Y \gtrsim X$) to indicate that there exists a generic constant $C>0$, independent of the approximation parameters under consideration, such that $X \le CY$. We write $X \asymp Y$ if both $X \lesssim Y$ and $Y \lesssim X$ hold. For any $\zeta \in \mathbb{R}$, we denote its floor and ceiling by $\lfloor \zeta \rfloor := \max\{\, i \in \mathbb{Z} : i \le \zeta\,\}$ and $\lceil \zeta \rceil := \min\{\, i \in \mathbb{Z} : i \ge \zeta\,\}$. Furthermore, for a bounded domain $\Omega \subset \mathbb{R}^d$, where
$d\in\mathbb N^+$ is the spatial dimension, we employ standard notation for
function spaces. We denote by $C(\overline{\Omega})$ the space of continuous
functions on $\overline{\Omega}$. For $s\in\mathbb N^+$,
$C^s(\overline{\Omega})$ denotes the space of functions whose partial
derivatives up to order $s$ are continuous on $\overline{\Omega}$. For
$\alpha\in(0,1]$ and $\lambda>0$, we define
\[
\mathrm{H\ddot{o}lder}(\overline{\Omega},\alpha,\lambda)
:=
\left\{
f:\overline{\Omega}\to\mathbb R
\,\middle|\,
|f(x)-f(y)|\le \lambda |x-y|_\infty^\alpha
\ \text{for all } x,y\in\overline{\Omega}
\right\}.
\]
Moreover, $L^p(\Omega)$ denotes the standard Lebesgue space. For $s>0$ and $1\le q\le\infty$, $W^{s,q}(\Omega)$ denotes the Sobolev space of smoothness $s$ and integrability index $q$, equipped with its usual norm; fractional values of $s$ are understood in the standard fractional Sobolev sense. We also use $B^s_{q,\rho}(\Omega)$, with $s>0$ and $1\le q,\rho\le\infty$, to denote the Besov space with smoothness $s$, integrability index $q$, and fine index $\rho$.

\subsection{Deep neural networks with bounded parameters}

We consider fully connected feedforward neural networks with scalar outputs defined on $\overline{\Omega}$. A specific network architecture is determined by its depth $K \in \mathbb{N}^+$ (i.e., the number of hidden layers), together with layer widths $N_0, N_1, \dots, N_{K+1} \in \mathbb{N}^+$, where $N_0=d$ is the input dimension and $N_{K+1}=1$ is the output dimension. The trainable parameter set is denoted by $\Theta = \{(W_\ell, b_\ell)\}_{\ell=1}^{K+1}$, with weight matrices $W_\ell \in \mathbb{R}^{N_\ell \times N_{\ell-1}}$ and bias vectors $b_\ell \in \mathbb{R}^{N_\ell}$. We use a continuous activation function $\sigma:\mathbb R\to\mathbb R$, applied componentwise in each hidden layer. Typical examples include the ReLU activation $\sigma(t)=\max\{0,t\}$, and the sigmoid activation. The corresponding network realization $\phi(\cdot;\Theta): \overline{\Omega} \to \mathbb{R}$ is defined recursively via
\begin{equation}\label{eq:NN_recursion}
\begin{aligned}
\widetilde{\phi}_0(x) &= x,\\
\phi_\ell(x) &= W_\ell\widetilde{\phi}_{\ell-1}(x) + b_\ell, \qquad
\widetilde{\phi}_\ell(x) = \sigma(\phi_\ell(x)), \qquad \ell=1,\dots,K,\\
\phi(x;\Theta) &= W_{K+1}\widetilde{\phi}_K(x) + b_{K+1}.
\end{aligned}
\end{equation}

To obtain hypothesis classes with suitable compactness properties, we impose a
finite norm constraint on the parameter set. More precisely, for each fixed
network architecture, we regard the parameter collection $\Theta=\{(W_\ell,b_\ell)\}_{\ell=1}^{K+1}$ as the vector consisting of all scalar weights and biases, and require
\begin{equation}\label{eq:param_bound}
\|\Theta\|\le r,
\end{equation}
where $r>0$ is a prescribed finite radius. Here $\|\cdot\|$ denotes any fixed
norm on the finite-dimensional parameter space associated with the chosen
architecture, for instance the $\ell^1$, $\ell^2$, or $\ell^\infty$ norm. Accordingly, for $N,L\in\mathbb N^+$ and $r>0$, we define
\begin{equation}\label{eq:class_linear}
\mathcal N(N,L,r)
:=
\left\{
\phi(\cdot;\Theta)\in C(\overline{\Omega})
\,\middle|\,
K\le L,\ \max_{1\le \ell\le K}N_\ell\le N,\ \|\Theta\|\le r
\right\}.
\end{equation}
By construction, these classes are nested: if $N_1\le N_2$, $L_1\le L_2$, and $r_1\le r_2$, then
$$
\mathcal N(N_1,L_1,r_1)\subseteq \mathcal N(N_2,L_2,r_2).
$$

\subsection{Approximation theorems}
Our theoretical framework relies on quantitative DNN approximation results. At its core, the essential issue is whether target functions can be efficiently approximated by networks belonging to norm-constrained classes of the form $\mathcal N(N,L,r)$. The most convenient situation is when the approximation theorem provides an explicit upper bound on the admissible parameter radius. Such a bound can be used directly to define the constrained hypothesis classes underlying the known-bound algorithm (Algorithm~\ref{alg:main_algorithm}). However, this setting is rather restrictive. Many classical approximation results, especially those in stronger topologies such as Sobolev spaces, provide approximation rates only in terms of network width and depth. The corresponding parameter radius is not given explicitly. In such cases, the known-bound algorithm is no longer directly applicable. This limitation motivates the introduction of a separate unknown-bound algorithm (Algorithm~\ref{alg:two_stage}).


We now recall one representative theorem of each type. We begin with an approximation result that yields an explicit admissible parameter radius. Since this radius depends on the target function $f$, it is not directly available in inverse problems where the target function is unknown, unless additional a priori information yields a usable bound independent of $f$.

\begin{theorem}[Explicitly bounded parameters, {\cite[Theorem~3]{schmidt2021kolmogorov}}]\label{thm:explicit_weights}
Let $p\in(1,\infty)$, $d\ge2$, and $m\in\mathbb N^+$. Assume that
$f\in \mathrm{H\ddot{o}lder}([0,1]^d,\alpha,\lambda)$ for some
$\alpha\in(0,1]$ and $\lambda>0$. Then there exists a ReLU network
$
\phi\in \mathcal N\!\left(\max\{4d,\,2^{md}+1\},\,2m+3,\,2\bigl(md\vee\|f\|_\infty\bigr)2^{m(d\vee p\alpha)}\right)
$, 
such that
$$
\|f-\phi\|_{L^p([0,1]^d)}\le 2\bigl(\lambda+\|f\|_\infty\bigr)2^{-\alpha m}.
$$
\end{theorem}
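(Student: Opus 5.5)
The plan is to follow Schmidt-Hieber's Kolmogorov-type construction. The idea is to split $f$ into an \emph{inner} map $x\mapsto$ (binary digits of $x$ up to level $m$), which is independent of $f$, and an \emph{outer} step function that encodes the values of $f$ on the dyadic grid. Partition $[0,1]^d$ into the $2^{md}$ dyadic cubes $I_{\mathbf j}$ of side $2^{-m}$, and let $x_{\mathbf j}$ be a corner of $I_{\mathbf j}$. Define the piecewise constant approximant
\[
f_m(x)=\sum_{\mathbf j} f(x_{\mathbf j})\,\mathbf 1_{I_{\mathbf j}}(x).
\]
By the H\"older condition, $|f-f_m|\le \lambda 2^{-\alpha m}$ pointwise, since $|x-x_{\mathbf j}|_\infty\le 2^{-m}$. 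So it suffices to approximate $f_m$ by a ReLU network in $L^p$ with error of order $(\lambda+\|f\|_\infty)2^{-\alpha m}$.

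\textbf{Step 1: the inner map.} First I build a ReLU subnetwork computing, for each coordinate $x_i$, its first $m$ binary digits. Each digit comes from a steep ReLU ramp $t\mapsto \sigma(2^{m'}(t-a))-\sigma(2^{m'}(t-a)-1)$ applied to the current remainder, followed by subtraction and doubling, repeated over $m$ layers. Using width about $4d$, this is exact except on a set of Lebesgue measure $\lesssim d\,m\,2^{-m'}$ near the dyadic breakpoints. I then combine the digits into the scalar address $\sum_i\sum_{k\le m} 2^{-k}$-weighted code $y(x)\in[0,1]$, giving a Kolmogorov-type single-variable encoding that identifies the cube index $\mathbf j$. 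The digit weights are bounded by about $md$ in magnitude, and this produces the $md$ factor in the radius.

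\textbf{Step 2: the outer function.} In one extra hidden layer of width $2^{md}+1$, I represent the univariate piecewise linear interpolant $g$ with $g(y_{\mathbf j})=f(x_{\mathbf j})$ on the $2^{md}$ code points. This is done as a sum of ReLUs $\sum_{\mathbf j}c_{\mathbf j}\sigma(y-y_{\mathbf j})$. The coefficients $c_{\mathbf j}$ are differences of slopes, so they are bounded by $\|f\|_\infty$ times the inverse code spacing, which gives the factor $2^{md}$. The total depth is $2m+3$ and the width is $\max\{4d,2^{md}+1\}$, matching the statement.

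\textbf{Step 3: error bound.} Off the bad set $E$, the network equals $f_m$ exactly. On $E$, the network and $f$ are both bounded by $C\|f\|_\infty$, and the clipping can be done with two extra ReLUs. Hence
\[
\|\phi-f_m\|_{L^p}\le 2\|f\|_\infty |E|^{1/p}.
\]
Choosing the ramp steepness so that $|E|\le 2^{-p\alpha m}$ gives the $2^{-\alpha m}$ rate. This choice is why the radius involves $2^{m(d\vee p\alpha)}$: the steepness $2^{mp\alpha}$ competes with the outer coefficients $2^{md}$. Adding $\lambda 2^{-\alpha m}$ from the H\"older step yields the stated bound.

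The main obstacle is the bookkeeping of the parameter radius. The weights must be bounded by $2(md\vee\|f\|_\infty)2^{m(d\vee p\alpha)}$ in the chosen norm, uniformly over all layers. This may require rescaling between layers, which is harmless by the positive homogeneity of ReLU, to balance the large steepness and outer coefficients. It also requires checking that the measure of $E$ scales as claimed. In practice I would simply cite \cite[Theorem~3]{schmidt2021kolmogorov}, since the statement is quoted from there.
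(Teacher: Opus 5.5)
Your route is the paper's route. The paper gives no proof of its own and simply quotes \cite[Theorem~3]{schmidt2021kolmogorov}, and your sketch is the construction behind that theorem: bit extraction by steep ReLU ramps, a scalar dyadic address, a univariate piecewise-linear outer layer, and a small exceptional set $E$. Several pieces of your bookkeeping are wrong, however, and must be fixed before the sketch delivers the stated constants.

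First, the factor $md$ in the radius does not come from the address weights, which are powers of $1/2$. It comes from the ramp steepness $s$. Your own estimate $|E|\le dm\,s^{-1}$ shows that $|E|\le 2^{-p\alpha m}$ forces $s\ge dm\,2^{p\alpha m}$, not $s=2^{p\alpha m}$ as in Step~3; with your choice the error gains an extra factor $(dm)^{1/p}$. The radius is then the larger of two quantities: the steepness, of order $md\,2^{p\alpha m}$, and the outer coefficients, of order $\|f\|_\infty 2^{md}$. Slope differences can reach $4\|f\|_\infty 2^{md}$, so the rescaling $c\,\sigma(y-y_{\mathbf j})=\tfrac{c}{2}\,\sigma(2y-2y_{\mathbf j})$ is genuinely needed to stay below $2(md\vee\|f\|_\infty)2^{m(d\vee p\alpha)}$.

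Second, the address must interleave the bits, $y=\sum_{i=1}^d\sum_{k=1}^m 2^{-((k-1)d+i)}a^i_k$. The literal $\sum_i\sum_{k\le m}2^{-k}a^i_k=\sum_i 2^{-m}\lfloor 2^m x_i\rfloor$ is not injective on cubes, and only the interleaved code has the spacing $2^{-md}$ that Step~2 uses.

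Third, no clipping is needed, and adding it would cost a hidden layer beyond the stated $2m+3$. The ramps take values in $[0,1]$, so even on $E$ the address lies in $[0,1-2^{-md}]$. There the interpolant is a convex combination of the values $f(x_{\mathbf j})$ and hence bounded by $\|f\|_\infty$.

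Finally, your construction bounds every weight and bias individually, i.e.\ it controls $\|\Theta\|_{\ell^\infty}$. It gives no bound of order $r$ for the $\ell^1$ or $\ell^2$ norms, which the definition of $\mathcal N(N,L,r)$ also allows. For instance, if $p\alpha\ge d$ and $md\ge\|f\|_\infty$, then $r=2md\,2^{p\alpha m}$. In that case the $2dm$ ramp weights of size at least $dm\,2^{p\alpha m}$, together with their biases, already have $\ell^2$ norm exceeding $r$. So ``in the chosen norm'' should read ``in the $\ell^\infty$ norm''.
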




We next recall a Sobolev-norm approximation result. In this case, no explicit radius bound is available, although the existence of a finite admissible radius is still guaranteed by the existence of the approximating network itself.

\begin{theorem}[Sobolev approximation, {\cite[Theorem~1.1]{hon2022simultaneous}}]\label{thm:sobolev_approx}
Suppose that $f\in C^s([0,1]^d)$ with $s\in\mathbb N$, $s>1$, satisfies  $\|\partial^{\boldsymbol{\alpha}}f\|_{L^\infty([0,1]^d)}<1$ for all $|\boldsymbol{\alpha}|\le s$. Then, for any $n,l\in\mathbb N^+$ and any $p\in(1,\infty)$, there exists a ReLU network $\phi$ with width at most $16\,s^{d+1}d(n+2)\log_2(8n)$ and depth at most $27\,s^2(l+2)\log_2(4l)$ such that
$$
\|f-\phi\|_{W^{1,p}([0,1]^d)}\le 85(s+1)^d8^s\,n^{-2(s-1)/d}l^{-2(s-1)/d}.
$$
\end{theorem}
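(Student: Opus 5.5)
This result is quoted from \cite[Theorem~1.1]{hon2022simultaneous}, and within the present paper it is simply invoked. A proof would follow the standard localization-plus-Taylor-polynomial strategy for deep ReLU approximation in the style of Lu--Shen--Yang--Zhang, upgraded to control the $W^{1,p}$ norm. The plan has three stages.

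\emph{Step 1: partition and local Taylor expansion.} Partition $[0,1]^d$ into $K^d$ cubes of side $1/K$, with $K\asymp (nl)^{2/d}$. On each cube, write $f$ as its Taylor polynomial of degree $s-1$ at the cube's corner, plus a remainder. Because $\|\partial^{\boldsymbol\alpha}f\|_\infty<1$, the remainder is $O(K^{-s})$ in $L^\infty$ and its gradient is $O(K^{-(s-1)})$. The $W^{1,p}$ error is therefore dominated by the gradient term, which produces the rate $n^{-2(s-1)/d}l^{-2(s-1)/d}$.

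\emph{Step 2: build the network components.} Three pieces are needed.
\begin{enumerate}
\item A step-function network that maps $x$ to the index of its cube. It should be built with width $O(n)$ and depth $O(l)$ by the bit-extraction technique, which stores the $K^d\asymp n^2l^2$ Taylor coefficients using only $O(nl)$-scale parameters.
\item A point-fitting network that outputs the coefficients $\partial^{\boldsymbol\alpha}f(x_\beta)$.
\item Yarotsky-type sawtooth networks approximating the products $x\mapsto x^{\boldsymbol\alpha}$. These should have error $O(e^{-cl})$ simultaneously in value and in first derivative.
\end{enumerate}
Composing these pieces and tracking widths and depths gives the stated $s^{d+1}d(n+2)\log_2(8n)$ and $s^2(l+2)\log_2(4l)$ factors.

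\emph{Step 3: the main obstacle.} The step function cannot be exactly piecewise constant with a ReLU network. It is correct only away from a thin ``trifling region'' near cube boundaries, and on that region its derivative is huge. For $L^\infty$ approximation this is usually handled by a median/shift trick. In $W^{1,p}$ that trick fails, because the gradient blows up there.

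My first attempt would be a smooth ReLU-representable partition of unity $\{g_m\}_{m\in\{1,2\}^d}$ built from shifted partitions. Each $g_m$ vanishes on the bad region of its own shifted grid. Summing $g_m\cdot\phi_m$ gives an approximant whose gradient error picks up extra terms $\nabla g_m\cdot(\phi_m-f)$. These are of size $K\cdot K^{-s}=K^{-(s-1)}$, which preserves the rate. Here $p<\infty$ matters: it lets the bad region, of measure $O(K^{-1}\cdot\text{small})$, be absorbed, since we need not control the supremum there.

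Finally, add the $L^p$ and gradient contributions and collect the constants $(s+1)^d8^s$. No explicit parameter radius emerges, because the bit-extraction weights are not controlled.
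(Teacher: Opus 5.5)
The paper gives no proof of this statement. It is imported verbatim from \cite[Theorem~1.1]{hon2022simultaneous} and used only as an input, to verify Assumption~\ref{ass:generic_approx} and to derive the Sobolev growth law \eqref{eq:scaling_sobolev}; your first sentence treats it the same way. Your supplementary sketch, however, contains a step that would fail, and it omits the property that makes the partition-of-unity mechanism work.

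\textbf{The small-measure absorption fails.} You claim that $p<\infty$ lets the bad region be absorbed by its small measure. On the transition strips the step network has slope of order $(K\delta)^{-1}$. The bit-extraction coefficient networks are controlled only at integer inputs. So $|\nabla\phi|$ is in general at least of order $(K\delta)^{-1}$ there, while the strips have total measure of order $dK\delta$. The resulting contribution $(K\delta)^{-1}(dK\delta)^{1/p}$ does not become small as $\delta\to0$ for any $p\ge1$.

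\textbf{The missing product condition.} Your partition of unity makes that absorption unnecessary, but only if two things are arranged. First, $\operatorname{supp}g_m\cap[0,1]^d$ must lie inside the set where the $m$-th step network is exactly constant. Transition strips of width a fixed fraction of $1/K$ suffice for this, so $\delta$ need not be small at all. Second, the approximate multiplication $\widetilde{\times}$ used to form $g_m\phi_m$ must satisfy $\widetilde{\times}(0,y)=0$ for all $y$. This second condition is the real crux of your Step~3. Where $g_m\equiv0$, the gradient of $\phi_m$ is huge, and any nonzero $\partial_y\widetilde{\times}(0,y)$ would pass it through. Polarization-based product networks built from an approximate square vanishing at $0$ do satisfy it, but this must be stated and checked. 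With both properties you obtain a $W^{1,\infty}$ estimate on all of $[0,1]^d$, and $p<\infty$ plays no role.

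\textbf{The explicit constants are not recovered.} The quantities $16s^{d+1}d(n+2)\log_2(8n)$, $27s^2(l+2)\log_2(4l)$ and $85(s+1)^d8^s$ do not follow from the bookkeeping you describe. Assembling the $2^d$ subnetworks $\phi_m$, in parallel or in sequence, multiplies the width or the depth by $2^d$, and no such factor appears in the stated bounds. The theorem with these constants therefore rests on the citation alone, which is all the paper relies on. Your sketch should be presented as heuristic motivation, not as a proof.
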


It is important to emphasize that Theorem~\ref{thm:explicit_weights} and Theorem~\ref{thm:sobolev_approx} serve merely as concrete illustrations. They will be employed in our simulation study (Section~\ref{sec:simulation}). The framework developed herein is modular and can accommodate other DNN approximation results, provided the approximation error is measured in a reflexive Banach space,
such as $L^p$ or $W^{s,p}$ with $s>0$ and $1<p<\infty$. While other norm-constrained network
approximation results exist, for example \cite{jiao2023approximation}, they
often rely on settings that are not directly compatible with the reflexive-space
requirement of the present framework.
More commonly, approximation rates are given in terms of network architecture,
such as width and depth, without an explicit parameter-radius bound.
Compatible results of this type are available for continuous and H\"older
functions in $L^p$ norms \cite{shen2020deep,shen2022optimal}, smooth functions
measured in Sobolev norms \cite{hon2022simultaneous}, and Sobolev or Besov
classes in $L^p$ \cite{yang2025optimal}. Table~\ref{TabRates} summarizes these
representative approximation rates over different target function classes.

\begin{table}[h]
\centering
\caption{Representative approximation results for ReLU neural networks over different target function classes.}
\label{TabRates}
\footnotesize
\renewcommand{\arraystretch}{1.32}
\setlength{\tabcolsep}{1.5pt}
\begin{tabular}{
  >{\centering\arraybackslash}p{1.25cm}
  >{\centering\arraybackslash}p{2.00cm}
  >{\centering\arraybackslash}p{1.55cm}
  >{\centering\arraybackslash}p{1.55cm}
  >{\centering\arraybackslash}p{3.65cm}
  >{\centering\arraybackslash}p{2.30cm}
}
\toprule
Reference & Function class & Width & Depth & Approximation error & Norm \\
\midrule

\cite{shen2022optimal}
& $\mathrm{H\ddot{o}lder}([0,1]^d,\alpha,\lambda)$
& $\mathcal O(N)$
& $\mathcal O(L)$
& $\mathcal O\!\left((N^2L^2\log N)^{-\alpha/d}\right)$
& $L^p([0,1]^d)$, \newline $p\in(1,\infty)$ \\

\midrule

\cite{hon2022simultaneous}
& $C^s([0,1]^d)$
& $\mathcal O(N\log N)$
& $\mathcal O(L\log L)$
& $\mathcal O\!\left(N^{-2(s-s_0)/d}L^{-2(s-s_0)/d}\right)$
& $W^{s_0,p}([0,1]^d)$, \newline $p\in(1,\infty)$, $s_0<s$ \\

\midrule

\cite{yang2025optimal}
& $W^{s,q}([0,1]^d)$ or \newline $B^s_{q,\rho}([0,1]^d)$
& $\mathcal O(N)$
& $\mathcal O(L)$
& $\mathcal O\!\left((NL)^{-2s/d}\right)$
& $L^p([0,1]^d)$, \newline $p\in(1,\infty)$ \\

\bottomrule
\end{tabular}
\end{table}

The preceding constructive approximation results motivate the following abstract assumption, cf. Assumption~\ref{ass:generic_approx}, formulated directly in terms of the network architecture parameters $N$ and $L$.

\begin{assumption}[Generic DNN approximability condition]
\label{ass:generic_approx}
Let $\mathcal X_1$ be a Banach space, and let
$\mathcal F\subset \mathcal X_1$ be a target function class. Assume that there exist minimal architecture sizes $N_{\min},L_{\min}\in\mathbb N^+$ and an error profile $\{\mathcal E_{N,L}\}_{N\ge N_{\min},\,L\ge L_{\min}}
\subset [0,\infty)$, nonincreasing in each argument and satisfying
$
\lim_{N,L\to\infty}\mathcal E_{N,L}=0,
$
such that for every $f\in\mathcal F$ and every
$N\ge N_{\min}$, $L\ge L_{\min}$, there exist a finite radius
$r>0$ and a network $\phi\in \mathcal N(N,L,r)$
satisfying
\[
\|f-\phi\|_{\mathcal X_1}
\le
\mathcal E_{N,L}.
\]
\end{assumption}

\begin{remark} 
Assumption~\ref{ass:generic_approx} is an abstract approximability condition rather than a definition of a specific network architecture. It is understood in an existential sense with respect to the admissible radius: for each $f\in\mathcal F$ and each complexity level $(N,L)$, there exists an approximating network in some class $\mathcal N(N,L,r)$ with a finite radius $r$. No uniform upper bound on the admissible radius $r$ is imposed; the radius may depend on $f$, $N$, and $L$. 
\end{remark}

\begin{remark}[Choice of the error profile $\mathcal E_{N,L}$]
The approximation theorems above determine admissible choices of
$\mathcal E_{N,L}$ by selecting the largest constructive parameters compatible
with the prescribed width and depth budgets $(N,L)$.

In Theorem~\ref{thm:explicit_weights}, the construction is indexed by $m$ and
requires $\max\{4d,2^{md}+1\}\le N,2m+3\le L$. Choosing the largest such $m=m(N,L)$ gives
\[
\mathcal E_{N,L}
=
2(\lambda+\|f\|_\infty)2^{-\alpha m(N,L)}
\asymp
\max\{N^{-\alpha/d},2^{-\alpha L/2}\},
\]
up to constants and integer rounding.

In Theorem~\ref{thm:sobolev_approx}, the parameters $n$ and $l$ control the
width and depth, respectively. Choosing the largest admissible
$n=n(N)$ and $l=l(L)$ yields
\[
\mathcal E_{N,L}
=
85(s+1)^d8^s\,
n(N)^{-2(s-1)/d}l(L)^{-2(s-1)/d}.
\]
Moreover, by the width and depth constraints, there exists a constant
$C_{s,d}>0$, depending only on $s$ and $d$, such that
\[
\mathcal E_{N,L}
\le
C_{s,d}
\left({\log N}/{N}\right)^{2(s-1)/d}
\left({\log L}/{L}\right)^{2(s-1)/d}.
\]
The resulting profile may be stepwise because the constructive parameters are
integer-valued, but this is enough for the monotonicity and convergence
requirements in Assumption~\ref{ass:generic_approx}.
\end{remark}

\section{A general framework of expanding neural network methods}\label{sec:framework}
In this section, we present a general framework for expanding neural-network regularization methods. We first state the assumptions, then introduce two algorithms corresponding to the cases with and without an a priori parameter-radius bound, and finally establish their convergence.

\subsection{Assumptions}
Throughout this subsection, the network class $\mathcal N(N,L,r)$ is understood
as the hypothesis class defined in \eqref{eq:class_linear}. We first record a
basic compactness property of this class, which will be used to verify the
abstract compactness assumption below in concrete settings. The proof of the following lemma is deferred to Appendix~\ref{appendix:lem_NN_compact}.

\begin{lemma}[Compactness of the hypothesis class]\label{lem:NN_compact_Lq}
Let $N,L\in\mathbb N^+$ be fixed, and let $0\le r<\infty$. Assume that the activation function $\sigma:\mathbb R\to\mathbb R$ is continuous. Then the network class $\mathcal{N}(N,L,r)$ is compact in $C(\overline{\Omega})$ equipped with the uniform norm $\|f\|_{C(\overline{\Omega})}:=\max_{x\in \overline{\Omega}}|f(x)|$. Consequently, for every $p\in[1,\infty)$, $\mathcal{N}(N,L,r)$ is also compact in $L^p(\Omega)$.
\end{lemma}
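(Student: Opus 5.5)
The plan is to write $\mathcal N(N,L,r)$ as a finite union of continuous images of compact parameter sets. Since $\overline{\Omega}$ is assumed bounded, it is compact. First, I would decompose the class according to architecture. An architecture is a tuple $\mathbf a=(K;N_1,\dots,N_K)$ with $1\le K\le L$ and $1\le N_\ell\le N$, and there are only finitely many such tuples. For each $\mathbf a$ the parameter space is $\mathbb R^{D(\mathbf a)}$, where $D(\mathbf a)=\sum_{\ell=1}^{K+1}N_\ell(N_{\ell-1}+1)$. Let $B_{\mathbf a}(r)=\{\Theta:\|\Theta\|\le r\}$. Since all norms on a finite-dimensional space are equivalent, $B_{\mathbf a}(r)$ is closed and bounded, hence compact by Heine--Borel. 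This holds for any of the admissible norms $\ell^1,\ell^2,\ell^\infty$, and also when $r=0$. Then
\[
\mathcal N(N,L,r)=\bigcup_{\mathbf a}R_{\mathbf a}\bigl(B_{\mathbf a}(r)\bigr),
\qquad R_{\mathbf a}(\Theta):=\phi(\cdot;\Theta).
\]
A finite union of compact sets is compact, so it suffices to show that each realization map $R_{\mathbf a}:B_{\mathbf a}(r)\to C(\overline{\Omega})$ is continuous.

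The main step is to prove continuity of $R_{\mathbf a}$ in the uniform norm by induction on the layers, using the recursion \eqref{eq:NN_recursion}. Suppose $\Theta_n\to\Theta$ in $B_{\mathbf a}(r)$. Assume inductively that $\widetilde\phi_{\ell-1}(\cdot;\Theta_n)\to\widetilde\phi_{\ell-1}(\cdot;\Theta)$ uniformly on $\overline{\Omega}$. The base case $\ell-1=0$ is trivial because $\widetilde\phi_0(x)=x$ does not depend on $\Theta$. The limits $\widetilde\phi_{\ell-1}(\cdot;\Theta)$ are continuous on a compact set, hence bounded, and the convergent sequence is then uniformly bounded, say by $M_{\ell-1}$. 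Writing
\[
W^n_\ell\widetilde\phi^n_{\ell-1}-W_\ell\widetilde\phi_{\ell-1}=(W^n_\ell-W_\ell)\widetilde\phi^n_{\ell-1}+W_\ell(\widetilde\phi^n_{\ell-1}-\widetilde\phi_{\ell-1})
\]
then gives uniform convergence of the pre-activations $\phi_\ell(\cdot;\Theta_n)\to\phi_\ell(\cdot;\Theta)$. All these values lie in a fixed compact interval $[-M,M]$, and $\sigma$ is uniformly continuous on that interval. Therefore $\sigma\circ\phi_\ell(\cdot;\Theta_n)\to\sigma\circ\phi_\ell(\cdot;\Theta)$ uniformly. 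The final affine layer is handled in the same way as the pre-activation step.

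The only delicate point is this induction. The activation $\sigma$ is merely continuous, not Lipschitz, so one cannot just multiply Lipschitz constants. The fix is to first establish the uniform bound $M$ on the pre-activations and then invoke uniform continuity of $\sigma$ on $[-M,M]$. Each realization $\phi(\cdot;\Theta)$ is continuous on $\overline{\Omega}$ as a composition of continuous maps, so $R_{\mathbf a}$ indeed takes values in $C(\overline{\Omega})$.

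Finally, for the $L^p$ statement, I would use that $\Omega$ is bounded, which gives $\|f\|_{L^p(\Omega)}\le|\Omega|^{1/p}\|f\|_{C(\overline{\Omega})}$. So the inclusion $C(\overline{\Omega})\hookrightarrow L^p(\Omega)$ is continuous, and the image of a compact set under a continuous map is compact. Hence $\mathcal N(N,L,r)$ is compact in $L^p(\Omega)$ for every $p\in[1,\infty)$.
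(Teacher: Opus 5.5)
Your proof is correct and follows the paper's argument. Both write $\mathcal N(N,L,r)$ as a finite union, over architectures, of images of Heine--Borel-compact parameter balls under the realization map, and both then pass to $L^p(\Omega)$ via the continuous embedding $C(\overline{\Omega})\hookrightarrow L^p(\Omega)$. The only difference is how continuity of $\Theta\mapsto\phi(\cdot;\Theta)$ in the uniform norm is shown: you use a layer-by-layer induction with uniform continuity of $\sigma$ on a bounded interval, whereas the paper argues in one step that $(x,\Theta)\mapsto\phi(x;\Theta)$ is jointly continuous, hence uniformly continuous, on the compact set $\overline{\Omega}\times\{\|\Theta\|\le r\}$, which avoids your explicit bounds $M_{\ell}$.
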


The approximation framework is formulated in the Banach space $\mathcal X_1$.
To establish the existence of minimizers for the network-constrained
minimization problems considered below, we require a compactness mechanism for
minimizing sequences that also preserves the admissibility of the limit. Although
Lemma~\ref{lem:NN_compact_Lq} provides compactness in $C(\overline{\Omega})$
and $L^p(\Omega)$, the ambient space $\mathcal X_1$ used in the regularization
analysis may be different. Therefore, we introduce an auxiliary Banach space
$\mathcal X_0$, endowed with a weaker topology in the sense that
$\mathcal X_1$ is continuously embedded into $\mathcal X_0$, in which the
network class $\mathcal N(N,L,r)$ is strongly compact. This two-space approach makes it possible to extract strongly convergent subsequences in $\mathcal X_0$ while retaining weak compactness in $\mathcal X_1$. In practice, the compactness of $\mathcal N(N,L,r)$ is often significantly easier to verify in a weaker topology than directly in $\mathcal X_1$, especially for nonsmooth activations (e.g., ReLU) and stronger norms. We therefore impose the following assumption.

\begin{assumption}[Topological setting]\label{ass:X1_pivot_general}
Let $\mathcal X_1$ be a reflexive Banach space of functions on $\Omega$, and let $\mathcal X_0$ be an auxiliary Banach space of functions on $\Omega$ such that:
\begin{enumerate}
\item[(i)] $\mathcal N(N,L,r)\subset \mathcal X_1$ for all $N,L\in\mathbb N^+$ and $r>0$;
\item[(ii)] the embedding $\mathcal X_1\hookrightarrow \mathcal X_0$ is continuous;
\item[(iii)] for every fixed $(N,L,r)$, the class $\mathcal N(N,L,r)$ is compact in $\mathcal X_0$.
\end{enumerate}
\end{assumption}

\begin{remark}
(a) If $\mathcal N(N,L,r)$ is already compact in the chosen space $\mathcal X_1$,
then no auxiliary space is needed. In this case, one may take
$\mathcal X_0=\mathcal X_1$, so that Assumption~\ref{ass:X1_pivot_general}(ii)
is trivial and Assumption~\ref{ass:X1_pivot_general}(iii) reduces to compactness
in $\mathcal X_1$ itself.

(b) The inclusion $\mathcal N(N,L,r)\subset\mathcal X_1$ in
Assumption~\ref{ass:X1_pivot_general}(i) is mild in many standard settings.
Typical reflexive choices for $\mathcal X_1$ include $L^p(\Omega)$ and
$W^{s,p}(\Omega)$ with $1<p<\infty$ and $s\in\mathbb N^+$. For instance, if
the activation function is locally Lipschitz, such as ReLU, then, for fixed
$(N,L,r)$, all network realizations are Lipschitz continuous on bounded domains.
Hence, by Rademacher's theorem, they belong to $W^{1,\infty}(\Omega)$. For
higher-order Sobolev spaces $W^{s,p}(\Omega)$ with $s\ge2$, stronger
smoothness assumptions on the activation function are generally needed.


(c) Assumption~\ref{ass:X1_pivot_general}(iii) can be verified in many common
situations by Lemma~\ref{lem:NN_compact_Lq}. In particular, one may take
$\mathcal X_0=L^q(\Omega)$ whenever the embedding $\mathcal X_1\hookrightarrow L^q(\Omega)$ is continuous. If a continuous embedding $\mathcal X_1\hookrightarrow C(\overline{\Omega})$ is available, then $\mathcal X_0=C(\overline{\Omega})$ is also admissible. The appropriate choice depends on the ambient space $\mathcal X_1$ and the specific application under consideration.
\end{remark}

With this topological setting, we now introduce the assumptions on the forward operator $A$ and the regularizer $\mathcal R$ that are required for the existence analysis of the network-constrained minimization problems.

\begin{assumption}[Forward operator]\label{ass:forward_full}
Let $\mathcal X_A$ be a Banach space and let $A:\mathcal X_A\to\mathcal Y$ be the forward operator. Assume that:
\begin{enumerate}
\item[(i)] \textbf{(Well-definedness)} The embedding $\mathcal X_1\hookrightarrow \mathcal X_A$ is continuous, so that $A$ is well defined on the network class $\mathcal N(N,L,r)\subset \mathcal X_1$.
\item[(ii)] \textbf{(Weak continuity)} $A$ is sequentially weak-to-weak continuous on bounded subsets of $\mathcal X_1$, i.e., $f_m\rightharpoonup f$ in $\mathcal X_1$ with $\sup_m\|f_m\|_{\mathcal X_1}<\infty$ implies $A(f_m)\rightharpoonup A(f)$ in $\mathcal Y$ as $m\to\infty$.
\item[(iii)] \textbf{(Injectivity)} The equation $A(f)=g$ admits a unique solution $f^\dagger$ in $\mathcal X_1$.
\item[(iv)] \textbf{(Local H\"older continuity)} There exist constants $\theta\in(0,1]$, $L_A>0$, and $\eta>0$ such that $\|A(f)-A(f^\dagger)\|_{\mathcal Y} \le L_A\,\|f-f^\dagger\|_{\mathcal X_1}^{\theta}$ for all $f\in\mathcal X_1$ satisfying $\|f-f^\dagger\|_{\mathcal X_1}\le \eta$.
\end{enumerate}
\end{assumption}

\begin{assumption}[Regularizer]\label{ass:R_full}
Let $\mathcal R:\mathcal X_1\to[0,\infty]$ be a proper functional. Assume that:
\begin{enumerate}
\item[(i)] \textbf{(Weak lower semicontinuity)} $\mathcal R$ is sequentially weakly lower semicontinuous on $\mathcal X_1$.
\item[(ii)] \textbf{(Coercivity)} $\mathcal R(f)\to\infty$ as $\|f\|_{\mathcal X_1}\to\infty$.
\item[(iii)] \textbf{(Finiteness on the network class)} For every $(N,L,r)$, there exists at least one network $\phi\in\mathcal N(N,L,r)$ such that $\mathcal R(\phi)<\infty$.
\item[(iv)] \textbf{(Uniform regularizer bound on approximants)} For the exact solution $f^\dagger\in\mathcal F$, there exists a constant $C_{f^\dagger}>0$, independent of $(N,L)$, such that the approximants in Assumption~\ref{ass:generic_approx} can be chosen to satisfy
\[
\mathcal R(\phi_{N,L})\le C_{f^\dagger},
\qquad \forall\, N\ge N_{\min},L\ge L_{\min}.
\]
\end{enumerate}
\end{assumption}

\begin{remark}[A canonical choice of $\mathcal R$]\label{rem:R_choice}
A natural and standard choice is $\mathcal R(f):=\|f\|_{\mathcal X_1}$. Then Assumption~\ref{ass:R_full}(iv) is satisfied. Indeed, by Assumption~\ref{ass:generic_approx}, for $f^\dagger\in\mathcal F$ and $N\ge N_{\min},L\ge L_{\min}$, there exists an approximant $\phi\in\mathcal N(N,L,r)$ for some $r>0$ such that
$\|f^\dagger-\phi\|_{\mathcal X_1}\le \mathcal E_{N,L}\le \mathcal{E}_{N_{\min},L_{\min}}.$
Hence, by the triangle inequality,
\[
\mathcal R(\phi_{N,L})
=\|\phi\|_{\mathcal X_1}
\le \|f^\dagger\|_{\mathcal X_1}+\|f^\dagger-\phi\|_{\mathcal X_1}
\le \|f^\dagger\|_{\mathcal X_1}+\mathcal E_{N_{\min},L_{\min}},
\]
where the last inequality follows from the monotonicity of $\mathcal E_{N,L}$. Therefore, one may take $C_{f^\dagger}:=\|f^\dagger\|_{\mathcal X_1}+\mathcal E_{N_{\min},L_{\min}}$,
which is independent of $(N,L)$.
\end{remark}
\subsection{Expanding neural network regularization algorithms}

We now formulate the expanding deep neural network (DNN) regularization algorithms for the ill-posed operator equation \eqref{eq:operator_equation}. The central idea is to use the network architecture itself as the regularization mechanism, so that the structural complexity of the network plays the role of a regularization parameter. Instead of fixing the architecture \emph{a priori}, we consider a nested sequence of neural network classes with gradually increasing width and depth, and at each stage solve a regularized minimization problem over the current admissible class. A discrepancy-type stopping rule selects the final architecture in a noise-dependent manner, thereby determining a stable regularized reconstruction.

For a given architecture pair $(N,L)$ and an admissible parameter radius $r$, we define the regularized objective functional
\begin{equation}\label{eq:def_J}
J_{N,L,r}^{\delta}(\phi):=\|A(\phi)-g^\delta\|_{\mathcal Y}+\beta_{N,L}\mathcal R(\phi).
\end{equation}
The first term measures consistency with the noisy data, while the regularization term $\mathcal R(\phi)$ stabilizes the reconstruction. The balance is controlled by the weight $\beta_{N,L}$, which should be tied to the approximation level of the corresponding network class. Based on the error profile $\mathcal E_{N,L}$ from
Assumption~\ref{ass:generic_approx}, we first define the extended
algorithmic approximation scale
\begin{equation}
\label{defoverline_E}
    \begin{aligned}
        \overline{\mathcal E}_{N,L}
:=
\begin{cases}
\mathcal E_{N_{\min},L_{\min}},
& \text{if } N<N_{\min}\ \text{or}\ L<L_{\min},\\[1mm]
\mathcal E_{N,L},
& \text{if } N\ge N_{\min}\ \text{and}\ L\ge L_{\min}.
\end{cases}
    \end{aligned}
\end{equation}
We then define
\begin{equation}\label{eq:beta_effective}
\beta_{N,L}:=
c_0\,\overline{\mathcal E}_{N,L}^{\theta},
\end{equation}
where $c_0>0$ is fixed and $\theta\in(0,1]$ is the exponent from
Assumption~\ref{ass:forward_full}(iv). This truncated definition ensures
that $\beta_{N,L}$ is well defined even when the algorithm visits
architectures below the minimal approximation threshold. For such smaller
architectures, $\overline{\mathcal E}_{N,L}$ is used only as a
regularization scale, not as an approximation guarantee for
$\mathcal N(N,L,\cdot)$.

To describe the architectural expansion, we prescribe two nondecreasing and unbounded sequences $\{N_k\}_{k\ge1}, \{L_k\}_{k\ge1}\subset \mathbb N^+$ to dictate the network width and depth at iteration $k$. After solving the corresponding regularized minimization problem, the algorithm evaluates a stopping criterion to decide whether a further expansion is needed.

To guarantee finite termination for this expansion mechanism, we introduce a noise-dependent target approximation level $\mathcal E_\delta$ and a corresponding target architecture $(N_{\rm tar}(\delta),L_{\rm tar}(\delta))$. The guiding principle is to ensure that for a sufficiently large radius $r$ and $N\ge N_{\min},L\ge L_{\min}$, there exists an approximant $\phi\in \mathcal{N}(N,L,r)$ such that the objective $J_{N,L,r}^\delta(\phi)$ falls below the threshold $\tau\delta$. For approximants $\phi$ satisfying $\|\phi-f^\dagger\|_{\mathcal X_1}\le \eta$, the local H\"older continuity of $A$ (Assumption~\ref{ass:forward_full}(iv)), the approximation property (Assumption~\ref{ass:generic_approx}), the uniform regularizer bound on admissible approximants (Assumption~\ref{ass:R_full}(iv)), and the triangle inequality yield:
\begin{equation}\label{eq:J_decomposition}
\begin{aligned}
J_{N,L,r}^\delta(\phi) 
&\le \|A(f^\dagger) - g^\delta\|_{\mathcal{Y}} + \|A(\phi) - A(f^\dagger)\|_{\mathcal{Y}} + \beta_{N,L} \mathcal{R}(\phi) \\
&\le \delta + L_A \|\phi - f^\dagger\|_{\mathcal{X}_1}^\theta + c_0 C_{f^\dagger} \mathcal{E}_{N,L}^\theta.
\end{aligned}
\end{equation}
By requiring each of the two architecture-dependent bias terms in \eqref{eq:J_decomposition} to not exceed $\frac{\tau-1}{2}\delta$, we naturally derive the target error level:
\begin{equation}\label{eq:E_delta}
\mathcal{E}_\delta := \min \left\{ 
\left( \frac{\tau-1}{2L_A} \delta \right)^{1/\theta}, 
\left( \frac{\tau-1}{2c_0 C_{f^\dagger}} \delta \right)^{1/\theta}, 
\eta 
\right\}.
\end{equation}
We then select target budgets $(N_{\rm tar}(\delta),L_{\rm tar}(\delta))$ with
$N_{\rm tar}(\delta)\ge N_{\min}$ and $L_{\rm tar}(\delta)\ge L_{\min}$ such that
\begin{equation}
    \label{def_NLtardelta}
    q_0\mathcal E_\delta
\le
\mathcal E_{N_{\rm tar}(\delta),L_{\rm tar}(\delta)}
\le
\mathcal E_\delta
\end{equation}
for some $q_0\in(0,1]$. Since $\mathcal E_\delta\le\eta$, every
architecture dominating $(N_{\rm tar}(\delta),L_{\rm tar}(\delta))$ lies in the admissible regime required by the stopping rule and the local stability estimate.

The target architecture $(N_{\rm tar}(\delta),L_{\rm tar}(\delta))$ introduced above plays distinct roles in the two cases. In Case~I, it serves purely as an analytical tool for the termination proof and does not enter the algorithm explicitly. More precisely, in this setting the target architecture is understood through the constructive parameterization of the underlying approximation theorem with explicit radius bounds. In Case~II, by contrast, $(N_{\rm tar},L_{\rm tar})$ is used both in the theoretical analysis and as the explicit reference scale for the transition from Phase~I to Phase~II.

\medskip
\noindent\textbf{Case I (Explicit radius bound available).}
We first consider the situation where the underlying approximation theory provides an explicit, nondecreasing upper bound for the admissible parameter radius. Although this bound generally depends on several fixed problem-specific constants, we write it as $r_{\max}(N,L)$ in order to emphasize its functional dependence on the network architecture.

At the $k$-th expansion stage, with the architecture pair $(N_k, L_k)$, we define the stage-specific radius $r_k := r_{\max}(N_k, L_k)$. The admissible class is restricted to $\mathcal N(N_k, L_k, r_k)$. The resulting approximation $f_k^\delta$ is defined as a minimizer of the regularized objective \eqref{eq:def_J} over this constrained class, namely
\begin{equation}\label{eq:f_k_case1}
f_k^{\delta} \in \operatorname*{argmin}_{\phi \in \mathcal{N}(N_k, L_k,\, r_k)} J_{N_k, L_k,\, r_k}^{\delta}(\phi).
\end{equation}

Since the approximation profile is meaningful only above the minimal
architecture threshold and the local stability estimate requires the
approximating error to lie within the $\eta$-neighborhood of
$f^\dagger$, we allow the algorithm to stop only after these admissibility
conditions are met. We evaluate
\[
S_k^{1,\delta}:=\|A(f_k^{\delta})-g^\delta\|_{\mathcal Y},
\]
and use the stopping criterion
\begin{equation}\label{eq:stopping_case1}
N_k\ge N_{\min},\qquad
L_k\ge L_{\min},\qquad
\mathcal E_{N_k,L_k}\le \eta,
\qquad
S_k^{1,\delta}\le \tau\delta .
\end{equation}
Here the Morozov discrepancy test is still imposed purely on the data residual; the first three conditions are admissibility requirements for the theoretical error analysis.

We denote by $k(\delta)$ the first iteration at which \eqref{eq:stopping_case1} is satisfied. The resulting reconstruction is then given by $f^\delta := f_{k(\delta)}^\delta$, with the selected architecture given by $(N_{k(\delta)},L_{k(\delta)})$.

The corresponding procedure is summarized in Algorithm~\ref{alg:main_algorithm}.

\begin{algorithm}[h]
\caption{Expanding DNN regularization under an explicit radius bound}
\label{alg:main_algorithm}
\begin{algorithmic}[1]
\State \textbf{Input:}
Expansion sequences $\{N_k\}, \{L_k\}$; explicit radius bound function $r_{\max}(N,L)$; discrepancy parameter $\tau>1$; exponent $\theta\in(0,1]$; local radius $\eta>0$; constant $c_0>0$; noise level $\delta$; noisy data $g^\delta$; regularizer $\mathcal R$; error profile $\overline{\mathcal E}_{N,L}$.
\State \textbf{Output:}
Reconstructed solution $f^\delta$, final architecture dimensions $N_{k(\delta)}$, $L_{k(\delta)}$, and stopping index $k(\delta)$.

\State $k\gets 0$
\Repeat
    \State $k \gets k+1$
    \State $r_k \gets r_{\max}(N_k, L_k)$
    \State $\beta_{N_k,L_k}\gets
    c_0\,\overline{\mathcal E}_{N_k,L_k}^{\theta}$
    \State Compute a minimizer $f_k^{\delta}$ via \eqref{eq:f_k_case1}
    \State Evaluate the residual:
    $S_k^{1,\delta} \gets \|A(f_k^{\delta})-g^\delta\|_{\mathcal Y}$
\Until{$N_k\ge N_{\min}$, $L_k\ge L_{\min}$,
$\mathcal E_{N_k,L_k}\le \eta$, and
$S_k^{1,\delta}\le \tau\delta$}

\State $k(\delta)\gets k$
\State $f^\delta \gets f_{k(\delta)}^{\delta}$
\State \Return $f^\delta,\, N_{k(\delta)},\, L_{k(\delta)},\, k(\delta)$
\end{algorithmic}
\end{algorithm}

\medskip
\noindent\textbf{Case II (Explicit radius bound unavailable).}
We now turn to the more general scenario where no explicit upper bound for the admissible parameter radius is available \emph{a priori}. In this setting, expanding the network architecture alone is insufficient, since a restricted parameter radius may prevent the objective functional from reaching the stopping threshold $\tau\delta$, even if the structural capacity $(N_k, L_k)$ is immense. 

To address this, we employ a two-phase strategy. Phase I simultaneously enlarges the architecture and an exploratory radius. If the stopping criterion is not met once the architecture reaches the theoretically sufficient scale $(N_{\rm tar}(\delta),L_{\rm tar}(\delta))$ defined above, Phase II freezes the architecture at this target scale and solely increases the radius to ensure termination. Because reaching $(N_{\rm tar}(\delta),L_{\rm tar}(\delta))$ already guarantees that the approximation error is well within the required theoretical bounds, any remaining obstruction to satisfying the stopping criterion at that stage is strictly attributed to an insufficient radius rather than to inadequate architectural capacity.

\textbf{Phase I} jointly expands the architecture and a prescribed exploratory radius sequence $\{r_k^{\mathrm I}\}$. At the $k$-th iteration, we introduce a secondary sub-iteration index $j=0$ to denote this base radius state, and compute the minimizer
\begin{equation}\label{eq:f_k_case2_phase1}
f_{k,0}^{\delta} \in \operatorname*{argmin}_{\phi\in\mathcal N(N_k, L_k,\,r_k^{\mathrm I})} J_{N_k, L_k,\,r_k^{\mathrm I}}^{\delta}(\phi).
\end{equation}
We stop Phase I as soon as the objective $S_{k,0}^{2,\delta} := J_{N_k, L_k,\,r_k^{\mathrm I}}^{\delta}(f_{k,0}^{\delta})$ satisfies $S_{k,0}^{2,\delta} \le \tau\delta$. If this occurs, we set the stopping index $j(\delta)=0$. Otherwise, once $N_k \ge N_{\rm tar}(\delta)$ and $L_k \ge L_{\rm tar}(\delta)$, we retain the current iteration index $k$ and enter Phase II.

In \textbf{Phase II}, the architecture is frozen at the target scale $(N_{\rm tar}(\delta),L_{\rm tar}(\delta))$, and the admissible radius is enlarged along an unbounded secondary sequence $\{r_j^{\mathrm{II}}\}_{j\ge1}$ with $r_j^{\mathrm{II}} \uparrow \infty$. At the $j$-th sub-iteration, we compute the minimizer
\begin{equation}\label{eq:f_k_case2_phase2}
f_{k,j}^{\delta} \in \operatorname*{argmin}_{\phi \in \mathcal{N}(N_{\rm tar}(\delta),L_{\rm tar}(\delta), \, r_j^{\mathrm{II}})} J_{N_{\rm tar}, L_{\rm tar}, \, r_j^{\mathrm{II}}}^{\delta}(\phi).
\end{equation}
We then evaluate the corresponding stopping quantity: $S_{k,j}^{2,\delta} := J_{N_{\rm tar}(\delta),L_{\rm tar}(\delta), \, r_j^{\mathrm{II}}}^{\delta}\bigl(f_{k,j}^{\delta}\bigr)$, and terminate Phase~II at the first index $j$ for which $S_{k,j}^{2,\delta}\le \tau\delta$. This stopping index is denoted by $j(\delta)$, while $k(\delta)$ is inherited as the iteration index at which Phase~I concluded. Since the architecture remains fixed throughout this phase, the final selected structural parameters are $N_{k(\delta)}=N_{\rm tar}(\delta)$ and $L_{k(\delta)}=L_{\rm tar}(\delta)$, and the final stable reconstruction is given by $f^\delta := f_{k(\delta),j(\delta)}^\delta$.

The complete procedure is summarized in Algorithm~\ref{alg:two_stage}.

\begin{algorithm}[h]
\caption{Two-stage expanding DNN regularization without explicit radius bound}
\label{alg:two_stage}
\begin{algorithmic}[1]
\State \textbf{Input:} Expansion sequences $\{N_k\}, \{L_k\}$; target architecture $(N_{\rm tar}(\delta),L_{\rm tar}(\delta))$; Phase~I radii $\{r_k^{\mathrm I}\}_{k\ge1}$; Phase~II radii $\{r_j^{\mathrm{II}}\}_{j\ge1}$ with $r_j^{\mathrm{II}}\uparrow\infty$; discrepancy parameter $\tau>1$; exponent $\theta\in(0,1]$; constant $c_0>0$; noise level $\delta$; noisy data $g^\delta$; regularizer $\mathcal R$; error profile $\overline{\mathcal E}_{N,L}$.

\State \textbf{Phase I: joint expansion of architecture and parameter radius}
\State $k\gets 0$, $j\gets 0$
\Repeat
    \State $k\gets k+1$
    \State $\beta_{N_k, L_k}\gets c_0\,\overline{\mathcal E}_{N_k,L_k}^{\theta}$
    \State Compute the minimizer $f_{k,0}^{\delta}$ via \eqref{eq:f_k_case2_phase1}
    \State Evaluate the full objective: $S_{k,0}^{2,\delta}\gets J_{N_k, L_k,\, r_k^{\mathrm I}}^{\delta}(f_{k,0}^{\delta})$
\Until{$S_{k,0}^{2,\delta}\le \tau\delta$ \textbf{or} ($N_k\ge N_{\rm tar}(\delta)$ \textbf{and} $L_k\ge L_{\rm tar}(\delta)$)}

\If{$S_{k,0}^{2,\delta}\le \tau\delta$}
    \State $k(\delta)\gets k$, $j(\delta)\gets 0$
    \State $f^\delta\gets f_{k(\delta),0}^{\delta}$
\Else
    \State \textbf{Phase II: freeze architecture at $(N_{\rm tar}(\delta),L_{\rm tar}(\delta))$ and enlarge radius}
    \Repeat
        \State $j \gets j+1$
        \State Compute the minimizer $f_{k,j}^{\delta}$ via \eqref{eq:f_k_case2_phase2}
        \State Evaluate the full objective: $S_{k,j}^{2,\delta} \gets J_{N_{\rm tar}(\delta),L_{\rm tar}(\delta),\, r_j^{\mathrm{II}}}^{\delta}(f_{k,j}^{\delta})$
    \Until{$S_{k,j}^{2,\delta} \le \tau\delta$}
    \State $k(\delta) \gets k, \quad j(\delta) \gets j$
    \State $N_{k(\delta)} \gets N_{\rm tar}(\delta), \ L_{k(\delta)} \gets L_{\rm tar}(\delta)$
    \State $f^\delta \gets f_{k(\delta), j(\delta)}^{\delta}$
\EndIf

\State \Return $f^\delta,\,N_{k(\delta)},\,L_{k(\delta)},\,k(\delta),\,j(\delta)$
\end{algorithmic}
\end{algorithm}

\begin{remark}
The target architecture $(N_{\rm tar}(\delta),L_{\rm tar}(\delta))$ is introduced as a theoretical device to rigorously guarantee finite termination. However, the resulting bounds for $N_{\rm tar}(\delta)$ and $L_{\rm tar}(\delta)$ are typically extremely large and conservative due to the worst-case nature of the underlying approximation theory. In practice, the algorithm usually terminates in Phase~I long before the network budgets approach this theoretical scale. For instance, in Example~\ref{ex:conv} with \(\delta=10^{-4}\) and \(\tau=1.6\), the theoretical target architecture is already of extremely large magnitude. Depending on the concrete constants used in the estimate, one obtains values on the order of \(N_{\rm tar}\sim 10^8\) and \(L_{\rm tar}\sim 10^7\), which are far beyond practical computational limits.
\end{remark}

\begin{remark}[Computational efficiency via warm start]
In the implementation of Algorithm~\ref{alg:main_algorithm} and Algorithm~\ref{alg:two_stage}, we use a warm-start strategy between consecutive
architecture levels. When passing from stage $k$ to stage $k+1$, the weights
learned at stage $k$ are embedded into the enlarged network and used to
initialize the new optimization. This reduces the computational cost and improves
the stability of the non-convex training process.
\end{remark}

\begin{remark}[Computational transition to Phase~II]
Although any unbounded sequence $\{r_j^{\mathrm{II}}\}$ guarantees eventual termination at the theoretical level, for computational efficiency it is natural to choose
$r_1^{\mathrm{II}} \ge r_{k_{\mathrm{tr}}}^{\mathrm I}$, 
where $k_{\mathrm{tr}}$ denotes the iteration index at which Phase~I terminates and Phase~II is activated. This preserves a monotonically nested parameter domain, so that the network parameters obtained at the end of Phase~I remain admissible in Phase~II. Consequently, they can be used as a warm-start initialization for the subsequent non-convex optimization, which avoids restarting from an unrelated initialization and may improve computational efficiency in practice.
\end{remark}

To establish finite termination and quantify the final architectural complexity across both algorithmic cases, we impose the following unifying structural conditions on the expansion schedules.

\begin{assumption}(Expansion schedules)\label{ass:expansion_generic}
Let $\{N_k\}_{k\ge1}$ and $\{L_k\}_{k\ge1}$ be nondecreasing and unbounded sequences in $\mathbb N^+$. Assume that:
\begin{itemize}
\item[(i)] There exist constants $C_N, C_L \ge 1$ such that $N_{k+1}\le C_N N_k$ and $L_{k+1}\le C_L L_k$ for all $k\ge1$.

\item[(ii)] Let $k_N^*(\delta):=\min\{k\in\mathbb N^+: N_k\ge N_{\rm tar}(\delta)\}$ and $k_L^*(\delta):=\min\{k\in\mathbb N^+: L_k\ge L_{\rm tar}(\delta)\}$, and define $k^*(\delta):=\max\{k_N^*(\delta), k_L^*(\delta)\}$. Once one architectural component reaches its target level, it is kept fixed until the other component also reaches its target; that is, $N_k=N_{k_N^*(\delta)}$ for all $k\in[k_N^*(\delta),k^*(\delta)]$, and $L_k=L_{k_L^*(\delta)}$ for all $k\in[k_L^*(\delta),k^*(\delta)]$.

\item[(iii)] There exists a constant $q_{\rm sc}\in(0,1]$ such that $\mathcal E_{C_N N,\;C_L L}\ge q_{\rm sc}\,\mathcal E_{N,L}$ for all $N,L\in\mathbb N^+$. 
\end{itemize}
\end{assumption}

\subsection{Convergence analysis of Algorithms}

We now state the main theorem.

\begin{theorem}\label{maintheorem}
Let $f^\delta$ denote the approximate solution to the operator equation $A(f)=g$ generated by either Algorithm~\ref{alg:main_algorithm} or Algorithm~\ref{alg:two_stage}. Under Assumptions~\ref{ass:generic_approx}-\ref{ass:expansion_generic}, the following assertions hold:

\begin{itemize}
\item[(a)] \textbf{Existence.}  
For every $\delta>0$, $N,L\in\mathbb N^+$, and every admissible radius $r>0$, the regularized minimization problem$$f_{N,L,r}^{\delta}\in \operatorname*{argmin}_{\phi\in\mathcal N(N,L,r)}
\Big(\|A(\phi)-g^\delta\|_{\mathcal Y}+\beta_{N,L}\mathcal R(\phi)\Big)$$admits at least one minimizer.

\item[(b)] \textbf{Finite termination and architecture control.}  
For every $\delta>0$, both Algorithm~\ref{alg:main_algorithm} and Algorithm~\ref{alg:two_stage} terminate after finitely many steps. In particular, the stopping index \(k(\delta)\) of Algorithm~\ref{alg:main_algorithm} and the Phase~I stopping index \(k(\delta)\) of Algorithm~\ref{alg:two_stage} satisfy $k(\delta)\le \max\bigl\{k_N^*(\delta),\, k_L^*(\delta)\bigr\}$, where $k_N^*(\delta)$, $k_L^*(\delta)$ are the target reaching indices defined in Assumption~\ref{ass:expansion_generic}(ii). Moreover, if Algorithm~\ref{alg:two_stage} enters Phase~II, then the corresponding inflation index \(j(\delta)\) is finite. The selected width and depth of both algorithms satisfy
\begin{equation}\label{eq:NLdelta_upper_pointwise}
N(\delta)\le \max\bigl\{N_1,\, C_N N_{\rm tar}(\delta)\bigr\},
\qquad
L(\delta)\le \max\bigl\{L_1,\, C_L L_{\rm tar}(\delta)\bigr\},
\end{equation}
where $N_1$ and $L_1$ are fixed initial constants, and $N_{\rm tar}(\delta),L_{\rm tar}(\delta)$ are selected according to \eqref{def_NLtardelta}.
\item[(c)] \textbf{Convergence.}
For every sequence $\delta_n\downarrow0$, we have
\[
f^{\delta_n}\rightharpoonup f^\dagger \quad\text{in }\mathcal X_1,
\qquad
A(f^{\delta_n})\to g \quad\text{in }\mathcal Y
\quad\text{as }n\to\infty.
\]
If, in addition, there exists a Banach space $\mathcal X_2$ such that
$\mathcal X_1$ is compactly embedded into $\mathcal X_2$, then
\[
f^{\delta_n}\to f^\dagger
\quad\text{in }\mathcal X_2
\quad\text{as }n\to\infty.
\]
\end{itemize}
\end{theorem}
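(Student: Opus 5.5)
The proof splits into the three assertions, handled in order.

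\textbf{Existence.} For (a) I would use the direct method. Assumption~\ref{ass:R_full}(iii) ensures the infimum $m$ of $J^\delta_{N,L,r}$ over $\mathcal N(N,L,r)$ is finite. Take a minimizing sequence $\{\phi_m\}$. Then $\mathcal R(\phi_m)$ is bounded, because $\beta_{N,L}>0$, and coercivity (Assumption~\ref{ass:R_full}(ii)) bounds $\|\phi_m\|_{\mathcal X_1}$. Reflexivity of $\mathcal X_1$ gives a weakly convergent subsequence $\phi_m\rightharpoonup\phi^*$ in $\mathcal X_1$. Compactness of $\mathcal N(N,L,r)$ in $\mathcal X_0$ (Assumption~\ref{ass:X1_pivot_general}(iii)) gives a further subsequence converging strongly in $\mathcal X_0$ to some $\psi\in\mathcal N(N,L,r)$. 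The continuous embedding $\mathcal X_1\hookrightarrow\mathcal X_0$ maps weak limits to weak limits, so $\psi=\phi^*$ and the limit is admissible. Weak-to-weak continuity of $A$ on bounded sets, together with weak lower semicontinuity of the norm on $\mathcal Y$ and of $\mathcal R$, then shows that $\phi^*$ attains $m$.

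\textbf{Termination and architecture control.} For (b) the key estimate is \eqref{eq:J_decomposition}. Suppose $N\ge N_{\rm tar}(\delta)$ and $L\ge L_{\rm tar}(\delta)$. Monotonicity of $\mathcal E$ gives $\mathcal E_{N,L}\le\mathcal E_\delta\le\eta$. The approximant $\phi_{N,L}$ from Assumptions~\ref{ass:generic_approx} and \ref{ass:R_full}(iv) then satisfies $J^\delta_{N,L,r}(\phi_{N,L})\le\delta+\tfrac{\tau-1}{2}\delta+\tfrac{\tau-1}{2}\delta=\tau\delta$, provided $r$ is at least the radius of $\phi_{N,L}$.

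\begin{itemize}
\item[] \emph{Case I.} Here this radius condition holds by $r_k=r_{\max}(N_k,L_k)$. In the paper's framework I would make explicit that the approximants can be taken in $\mathcal N(N,L,r_{\max}(N,L))$; this is the meaning of ``explicit radius bound''. The minimizer satisfies $S^{1,\delta}_k\le J(f^\delta_k)\le J(\phi_{N_k,L_k})\le\tau\delta$, and the admissibility conditions in \eqref{eq:stopping_case1} also hold. So the algorithm stops no later than $k^*(\delta)$.
\item[] \emph{Case II.} Phase I stops by $k^*(\delta)$ by its own rule. In Phase II, $r^{\rm II}_j\uparrow\infty$ eventually exceeds the finite radius of the approximant $\phi_{N_{\rm tar},L_{\rm tar}}$, and the same bound gives $S^{2,\delta}_{k,j}\le\tau\delta$. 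Hence $j(\delta)<\infty$.
\end{itemize}

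For \eqref{eq:NLdelta_upper_pointwise}, take $k(\delta)\le k^*$. If $k_N^*=1$ then $N(\delta)\le N_{k^*}=N_1$, using Assumption~\ref{ass:expansion_generic}(ii). Otherwise $N_{k_N^*-1}<N_{\rm tar}$, and Assumption~\ref{ass:expansion_generic}(i) gives $N_{k_N^*}\le C_NN_{\rm tar}$. Assumption~\ref{ass:expansion_generic}(ii) freezes $N_k$ up to $k^*$, so the bound holds. Depth is handled identically.

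\textbf{Convergence.} For (c) the approach is the standard discrepancy argument, and I expect it to be the main obstacle because it needs uniform bounds on $\mathcal R(f^{\delta_n})$. First I would show $\beta_{N(\delta),L(\delta)}\gtrsim\delta$ at the stopping index. This uses Assumption~\ref{ass:expansion_generic}(iii) together with $(N(\delta),L(\delta))\le(C_NN_{\rm tar},C_LL_{\rm tar})$, giving $\overline{\mathcal E}_{N(\delta),L(\delta)}\ge q_{\rm sc}q_0\mathcal E_\delta\asymp\delta^{1/\theta}$ once $\delta$ is small so that $\mathcal E_\delta$ is not the $\eta$ branch. I would also treat the case $N(\delta)=N_1$ directly.

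Next, minimality at the final stage gives $\beta\mathcal R(f^\delta)\le J(f^\delta)\le\tau\delta$, which holds at stop in Case II by the stopping rule. In Case I it holds when the stop occurs at the target stage via comparison with $\phi$. If Case I stops earlier, only $S^{1,\delta}\le\tau\delta$ is known, and bounding $\mathcal R$ there is the delicate point. I would try comparing with the approximant at the stopping architecture, using $\mathcal E_{N_k,L_k}\le\eta$ and the local Hölder estimate, to get $J(f^\delta_k)\le\delta+L_A\mathcal E^\theta_{N_k,L_k}+c_0C_{f^\dagger}\mathcal E^\theta_{N_k,L_k}$. Dividing by $\beta=c_0\mathcal E^\theta_{N_k,L_k}$ would then give $\mathcal R(f^\delta)\le \delta/\beta+L_A/c_0+C_{f^\dagger}$, with $\delta/\beta$ controlled by the lower bound above.

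With $\mathcal R(f^{\delta_n})$ bounded, coercivity bounds $\{f^{\delta_n}\}$ in $\mathcal X_1$. Every subsequence then has a weakly convergent sub-subsequence $f^{\delta_n}\rightharpoonup\bar f$. Since $\|A(f^{\delta_n})-g\|\le(\tau+1)\delta_n\to0$, weak continuity of $A$ gives $A(\bar f)=g$. Injectivity forces $\bar f=f^\dagger$, so the whole sequence converges weakly, and $A(f^{\delta_n})\to g$ strongly. Finally, a compact embedding $\mathcal X_1\hookrightarrow\mathcal X_2$ maps bounded weakly convergent sequences to strongly convergent ones, which gives the $\mathcal X_2$ convergence.
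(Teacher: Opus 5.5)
Your proposal is correct and follows essentially the paper's own argument. For (a) it is the direct method, identifying the $\mathcal X_0$-strong and $\mathcal X_1$-weak limits. For (b) it compares with an approximant at or above the target architecture, plus radius inflation in Phase~II. For (c) it uses exactly the paper's bounds: $\mathcal R(f^\delta)\le L_A/c_0+\delta/(c_0\mathcal E^\theta_{N_\delta,L_\delta})+C_{f^\dagger}$ in Case~I, via the approximant at the stopping architecture, and $\beta_\delta\mathcal R(f^\delta)\le\tau\delta$ in Case~II, combined with $\mathcal E_{N_\delta,L_\delta}\ge q_{\rm sc}q_0\mathcal E_\delta$ and the subsequence/injectivity argument.

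Your caution about the edge case $N(\delta)=N_1$ is well placed, since the paper only disposes of it by taking $\delta$ ``sufficiently small''. It can be closed by applying Assumption~\ref{ass:expansion_generic}(iii) a fixed number of times, since $N_1$ and $L_1$ are fixed.
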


To make the architecture control in Theorem~\ref{maintheorem}(b) explicit, we insert the target error level \(\mathcal E_\delta=\mathcal O(\delta^{1/\theta})\) into the approximation rates of Theorems~\ref{thm:sobolev_approx} and \ref{thm:explicit_weights}. This yields the following asymptotic growth laws for the stopping architecture as \(\delta\to0\):
\begin{itemize}
    \item \textbf{H\"older setting (Algorithm~\ref{alg:main_algorithm}).}
    Suppose that \(f^\dagger\) satisfies the assumptions of Theorem~\ref{thm:explicit_weights}. The stopping depth and width satisfy
    \begin{equation}\label{eq:scaling_hoelder}
        L(\delta)=\mathcal O\left(\log(1/\delta)\right),
        \qquad
        N(\delta)=\mathcal O\left(\delta^{-\frac{d}{\alpha\theta}}\right)\quad \delta\to 0.
    \end{equation}
    
    Taking \(\mathcal X_1=L^p([0,1]^d)\), we have $f^\delta \rightharpoonup f^\dagger$ in $L^p([0,1]^d)$ as $\delta \to 0$.

    \item \textbf{Sobolev setting (Algorithm~\ref{alg:two_stage}).}
    Suppose that \(f^\dagger\) satisfies the assumptions of Theorem~\ref{thm:sobolev_approx} for some integer \(s>1\). The stopping architecture satisfies
    \begin{equation}\label{eq:scaling_sobolev}
        N(\delta)L(\delta)
        =
        \mathcal O\left(
        \delta^{-\frac{d}{2(s-1)\theta}}|\log\delta|^2
        \right)\quad \delta\to 0.
    \end{equation}
    Taking \(\mathcal X_1=W^{1,p}([0,1]^d)\), we obtain $f^\delta \rightharpoonup f^\dagger$ in $W^{1,p}([0,1]^d)$ as $\delta\to 0$.
    In particular, by the compact embedding $ W^{1,p}([0,1]^d)\hookrightarrow\hookrightarrow L^p([0,1]^d)$, it follows that $f^\delta \to f^\dagger$ in $L^p([0,1]^d)$ as $\delta\to 0$.
  
\end{itemize}

\begin{remark}
These asymptotic bounds show that, as the noise level \(\delta\) decreases, the stopping architecture must become richer. In the H\"older setting, the dominant growth appears in the width, while the depth increases only logarithmically. In the Sobolev setting, the width-depth product exhibits polynomial growth in \(\delta^{-1}\), up to logarithmic corrections.
\end{remark}

We end this section by the proof of Theorem \ref{maintheorem}.
\begin{proof}
\textbf{(a) Existence.}
Fix $\delta>0$, architecture dimensions $N,L\in\mathbb N^+$, and an admissible search radius $r>0$. Define
\[
J_{N,L,r}^\delta(f):=\|A(f)-g^\delta\|_{\mathcal Y}+\beta_{N,L}\mathcal R(f),
\qquad f\in\mathcal N(N,L,r).
\]
Let $v:=\inf_{f\in\mathcal N(N,L,r)}J_{N,L,r}^\delta(f)$, and choose a minimizing sequence $\{f_n\}\subset\mathcal N(N,L,r)$ such that $J_{N,L,r}^\delta(f_n)\to v$ as $n\to\infty$. By Assumption~\ref{ass:R_full}(iii), there exists $f_0\in\mathcal N(N,L,r)$ such that $\mathcal R(f_0)<\infty$. Since $A$ is well defined on $\mathcal N(N,L,r)$ by Assumption~\ref{ass:forward_full}(i), it follows that $J_{N,L,r}^\delta(f_0)<\infty$. Passing to a tail if necessary, we may assume $J_{N,L,r}^\delta(f_n)\le J_{N,L,r}^\delta(f_0)+1$ for all $n\in\mathbb N$. As the data fidelity term is nonnegative, this gives $\beta_{N,L}\mathcal R(f_n)\le J_{N,L,r}^\delta(f_n)\le J_{N,L,r}^\delta(f_0)+1$, hence $\mathcal R(f_n)\le \beta_{N,L}^{-1}(J_{N,L,r}^\delta(f_0)+1)$ for all $n\in\mathbb N$. By the coercivity of $\mathcal R$, the sequence $\{f_n\}$ is bounded in $\mathcal X_1$.

Since $\mathcal X_1$ is reflexive, there exist a subsequence, not relabeled, and an element $f^*\in\mathcal X_1$ such that $f_n\rightharpoonup f^*$ in $\mathcal X_1$. Since $\{f_n\}\subset\mathcal N(N,L,r)$ and $\mathcal N(N,L,r)$ is compact in $\mathcal X_0$ by Assumption~\ref{ass:X1_pivot_general}(iii), there exist a further subsequence, again not relabeled, and some $\tilde f\in\mathcal X_0$ such that $f_n\to \tilde f$ strongly in $\mathcal X_0$. The continuous embedding $\mathcal X_1\hookrightarrow\mathcal X_0$ implies that
$f_n\rightharpoonup f^*$ also in $\mathcal X_0$. Since strong convergence in
$\mathcal X_0$ implies weak convergence in $\mathcal X_0$, the uniqueness of weak
limits gives $\tilde f=f^*$ in $\mathcal X_0$. Since $\mathcal N(N,L,r)$ is compact, hence closed, in $\mathcal X_0$, we conclude that $f^*\in\mathcal N(N,L,r)$.

By Assumption~\ref{ass:forward_full}(ii), since $\{f_n\}$ is bounded in
$\mathcal X_1$ and $f_n\rightharpoonup f^*$ in $\mathcal X_1$, we have
$A(f_n)\rightharpoonup A(f^*)$ in $\mathcal Y$, and therefore $A(f_n)-g^\delta\rightharpoonup A(f^*)-g^\delta$ in $\mathcal Y$. Since the norm is weakly lower semicontinuous, $\|A(f^*)-g^\delta\|_{\mathcal Y}\le \liminf_{n\to\infty}\|A(f_n)-g^\delta\|_{\mathcal Y}$. Moreover, by the weak lower semicontinuity of $\mathcal R$, we have $\mathcal R(f^*)\le \liminf_{n\to\infty}\mathcal R(f_n)$. Consequently,
\[
J_{N,L,r}^\delta(f^*)\le \liminf_{n\to\infty}J_{N,L,r}^\delta(f_n)=v.
\]
Since $f^*\in\mathcal N(N,L,r)$ and $v$ is the infimum over $\mathcal N(N,L,r)$, we also have $v\le J_{N,L,r}^\delta(f^*)$. Hence $J_{N,L,r}^\delta(f^*)=v$, so $f^*$ is a minimizer. This proves \textbf{(a)}.

\textbf{(b) Finite termination.}
Fix any $\delta>0$, and choose target budgets $N_{\rm tar}(\delta)\ge N_{\min},L_{\rm tar}(\delta)\ge L_{\min}$ such that $q_0\,\mathcal E_\delta\le \mathcal E_{N_{\rm tar}(\delta),L_{\rm tar}(\delta)}\le \mathcal E_\delta$. Since the sequences $\{N_k\}_{k\ge1}$ and $\{L_k\}_{k\ge1}$ are nondecreasing and unbounded, the indices
\[
k_N^*(\delta):=\min\{k\in\mathbb N^+: N_k\ge N_{\rm tar}(\delta)\},\qquad
k_L^*(\delta):=\min\{k\in\mathbb N^+: L_k\ge L_{\rm tar}(\delta)\}
\]
are well defined. Let $k^*(\delta):=\max\{k_N^*(\delta),k_L^*(\delta)\}$ and $(\overline N,\overline L):=(N_{k^*(\delta)},L_{k^*(\delta)})$.

We first consider Algorithm~\ref{alg:main_algorithm}. By the explicit-bounded approximation theorem underlying Case~I, there exists $\phi^{\mathrm I}\in \mathcal N\bigl(N_{\rm tar}(\delta),L_{\rm tar}(\delta),r_{\max}(N_{\rm tar}(\delta),L_{\rm tar}(\delta))\bigr)$ such that $\|f^\dagger-\phi^{\mathrm I}\|_{\mathcal X_1}\le \mathcal E_{N_{\rm tar}(\delta),L_{\rm tar}(\delta)}\le \mathcal E_\delta$ and $\mathcal R(\phi^{\mathrm I})\le C_{f^\dagger}$. Since $\mathcal E_\delta\le \eta$, Assumption~\ref{ass:forward_full}(iv) applies to $\phi^{\mathrm I}$. Moreover, by the monotonicity of $r_{\max}$ and the nestedness of the network classes, $\phi^{\mathrm I}\in \mathcal N\bigl(\overline N,\overline L,r_{\max}(\overline N,\overline L)\bigr)$. Since $\overline{N}\ge N_{\min}$ and $\overline{L}\ge L_{\min}$, the definition \eqref{eq:beta_effective} gives $\beta_{\overline N,\overline L}=c_0\overline{\mathcal E}_{\overline N,\overline L}^{\theta}=c_0\mathcal E_{\overline N,\overline L}^{\theta}$. Hence
\begin{align*}
J_{\overline N,\overline L,r_{\max}(\overline N,\overline L)}^\delta(\phi^{\mathrm I})
&=
\|A(\phi^{\mathrm I})-g^\delta\|_{\mathcal Y}
+\beta_{\overline N,\overline L}\mathcal R(\phi^{\mathrm I}) \\
&\le
\|A(\phi^{\mathrm I})-A(f^\dagger)\|_{\mathcal Y}
+\|g-g^\delta\|_{\mathcal Y}
+\beta_{\overline N,\overline L}\mathcal R(\phi^{\mathrm I}) \\
&\le
L_A\|\phi^{\mathrm I}-f^\dagger\|_{\mathcal X_1}^{\theta}
+\delta
+c_0\mathcal E_{\overline N,\overline L}^{\theta} C_{f^\dagger} \\
&\le
L_A\mathcal E_\delta^\theta+\delta+c_0\mathcal E_\delta^\theta C_{f^\dagger}.
\end{align*}
By the definition of $\mathcal E_\delta$, we have $L_A\mathcal E_\delta^\theta\le \frac{\tau-1}{2}\delta$ and $c_0C_{f^\dagger}\mathcal E_\delta^\theta\le \frac{\tau-1}{2}\delta$, and therefore
\[
J_{\overline N,\overline L,r_{\max}(\overline N,\overline L)}^\delta(\phi^{\mathrm I})
\le \frac{\tau-1}{2}\delta+\delta+\frac{\tau-1}{2}\delta
= \tau\delta.
\]
Let $f^\delta$ denote the minimizer computed by Algorithm~\ref{alg:main_algorithm} at iteration $k^*(\delta)$. Since the admissible class contains $\phi^{\mathrm I}$, the minimality of $f^\delta$ implies
\[
J_{\overline N,\overline L,r_{\max}(\overline N,\overline L)}^\delta(f^\delta)
\le
J_{\overline N,\overline L,r_{\max}(\overline N,\overline L)}^\delta(\phi^{\mathrm I})
<\tau\delta.
\]
Hence $S_{k^*(\delta)}^{1,\delta}=\|A(f^\delta)-g^\delta\|_{\mathcal Y}\le J_{\overline N,\overline L,r_{\max}(\overline N,\overline L)}^\delta(f^\delta)<\tau\delta$. Since \(N_{k^*(\delta)}\ge N_{\rm tar}(\delta)\ge N_{\min}\) and
\(L_{k^*(\delta)}\ge L_{\rm tar}(\delta)\ge L_{\min}\), and $\mathcal E_{N_{k^*(\delta)},L_{k^*(\delta)}}
\le
\mathcal E_{N_{\rm tar}(\delta),L_{\rm tar}(\delta)}
\le
\mathcal E_\delta
\le \eta$, all admissibility conditions in \eqref{eq:stopping_case1} are satisfied at
iteration \(k^*(\delta)\). Therefore Algorithm~\ref{alg:main_algorithm}
terminates no later than iteration \(k^*(\delta)\).

We next consider Algorithm~\ref{alg:two_stage}. By Assumption~\ref{ass:generic_approx} and Assumption~\ref{ass:R_full}(iv), there exist a finite radius $r_\delta^*>0$ and an approximant $\phi^{\mathrm{II}}\in \mathcal N(N_{\rm tar}(\delta),L_{\rm tar}(\delta),r_\delta^*)$ such that $\|f^\dagger-\phi^{\mathrm{II}}\|_{\mathcal X_1}\le \mathcal E_{N_{\rm tar}(\delta),L_{\rm tar}(\delta)}\le \mathcal E_\delta$ and $\mathcal R(\phi^{\mathrm{II}})\le C_{f^\dagger}$. Again, since $\mathcal E_\delta\le \eta$, Assumption~\ref{ass:forward_full}(iv) applies to $\phi^{\mathrm{II}}$. Because $r_j^{\mathrm{II}}\uparrow\infty$, there exists a finite index $j^*:=\min\{j\in\mathbb N^+: r_j^{\mathrm{II}}\ge r_\delta^*\}$ such that $\phi^{\mathrm{II}}\in \mathcal N(N_{\rm tar}(\delta),L_{\rm tar}(\delta),r_{j^*}^{\mathrm{II}})$. Moreover,
\begin{align*}
J_{N_{\rm tar}(\delta),L_{\rm tar}(\delta),r_{j^*}^{\mathrm{II}}}^\delta(\phi^{\mathrm{II}})
&=
\|A(\phi^{\mathrm{II}})-g^\delta\|_{\mathcal Y}
+\beta_{N_{\rm tar}(\delta),L_{\rm tar}(\delta)}\mathcal R(\phi^{\mathrm{II}}) \\
&\le
\|A(\phi^{\mathrm{II}})-A(f^\dagger)\|_{\mathcal Y}
+\|g-g^\delta\|_{\mathcal Y}
+\beta_{N_{\rm tar}(\delta),L_{\rm tar}(\delta)}\mathcal R(\phi^{\mathrm{II}}) \\
&\le
L_A\|\phi^{\mathrm{II}}-f^\dagger\|_{\mathcal X_1}^{\theta}
+\delta
+c_0\mathcal E_{N_{\rm tar}(\delta),L_{\rm tar}(\delta)}^{\theta} C_{f^\dagger} \\
&\le
L_A\mathcal E_\delta^\theta+\delta+c_0\mathcal E_\delta^\theta C_{f^\dagger}
<\tau\delta.
\end{align*}
If Phase~I already stops before or at iteration $k^*(\delta)$, then there is nothing to prove. Otherwise, by the definition of $k^*(\delta)$, Phase~I reaches the target budget by iteration $k^*(\delta)$ and enters Phase~II. At the inflation index $j^*$, the approximant $\phi^{\mathrm{II}}$ is admissible for the Phase~II minimization problem, so the minimizer $f^\delta$ computed at $(k^*(\delta),j^*)$ satisfies
\[
S_{k^*(\delta),j^*}^{2,\delta}
=
J_{N_{\rm tar}(\delta),L_{\rm tar}(\delta),r_{j^*}^{\mathrm{II}}}^\delta(f^\delta)
\le
J_{N_{\rm tar},L_{\rm tar},r_{j^*}^{\mathrm{II}}}^\delta(\phi^{\mathrm{II}})
<\tau\delta.
\]
Hence Phase~II terminates after finitely many inflation steps. In particular, $k(\delta)\le k^*(\delta)=\max\{k_N^*(\delta),k_L^*(\delta)\}$.

It remains to prove the complexity bounds. For the width sequence, if $k_N^*(\delta)=1$, then $N_{k_N^*(\delta)}=N_1$; if $k_N^*(\delta)>1$, then the minimality of $k_N^*(\delta)$ implies $N_{k_N^*(\delta)-1}<N_{\rm tar}(\delta)$, and Assumption~\ref{ass:expansion_generic}(i) yields $N_{k_N^*(\delta)}\le C_NN_{k_N^*(\delta)-1}<C_NN_{\rm tar}(\delta)$. Hence $N_{k_N^*(\delta)}\le \max\{N_1,\,C_NN_{\rm tar}(\delta)\}$. Similarly, $L_{k_L^*(\delta)}\le \max\{L_1,\,C_LL_{\rm tar}(\delta)\}$.
For Algorithm~\ref{alg:main_algorithm}, Assumption~\ref{ass:expansion_generic}(ii) yields $N_{k^*(\delta)}=N_{k_N^*(\delta)}$ and $L_{k^*(\delta)}=L_{k_L^*(\delta)}$. Thus,
\[
    N(\delta)\le N_{k^*(\delta)}=N_{k_N^*(\delta)}\le \max\{N_1,\,C_NN_{\rm tar}(\delta)\},
\]
and 
\[
    L(\delta)\le L_{k^*(\delta)}=L_{k_L^*(\delta)}\le \max\{L_1,\,C_LL_{\rm tar}(\delta)\}.
\]
For Algorithm~\ref{alg:two_stage}, if Phase~I terminates, then $k(\delta)\le k^*(\delta)$ and the monotonicity of the sequences gives $N(\delta)=N_{k(\delta)}\le N_{k^*(\delta)}$ and $L(\delta)=L_{k(\delta)}\le L_{k^*(\delta)}$, so the same bounds follow. If Phase~II is activated, then by construction the architecture is frozen at $(N_{\rm tar},L_{\rm tar})$, hence the above bounds also hold. 
This completes the proof of \textbf{(b)}.

\textbf{(c) Convergence.}
Let \(\delta\downarrow0\), and let \(f^{\delta}\) be the corresponding outputs of either algorithm with final indices \((N_{\delta},L_{\delta})\). We first derive a uniform bound for \(\mathcal R(f^{\delta})\).

For Algorithm~\ref{alg:main_algorithm}, by the stopping criterion
\eqref{eq:stopping_case1}, the final architecture satisfies $N_\delta\ge N_{\min},L_\delta\ge L_{\min}$ and $\mathcal E_{N_\delta,L_\delta}\le \eta .$ Hence the approximation property in Assumption~\ref{ass:generic_approx}
is applicable at the final architecture. Together with
Assumption~\ref{ass:R_full}(iv), this yields an approximant
$\phi_\delta\in
\mathcal N\bigl(
N_\delta,L_\delta,r_{\max}(N_\delta,L_\delta)
\bigr)$ such that $\|f^\dagger-\phi_\delta\|_{\mathcal X_1}
\le
\mathcal E_{N_\delta,L_\delta}\le \eta$, and $\mathcal R(\phi_\delta)\le C_{f^\dagger}$. Hence Assumption~\ref{ass:forward_full}(iv) applies to \(\phi_\delta\).
Denote $\beta_{\delta}=c_0\,\overline{\mathcal E}_{N_{\delta},L_{\delta}}^{\theta}$. Since $N_\delta\ge N_{\min}$ and $L_\delta\ge L_{\min}$, the definition \eqref{defoverline_E} gives that $\beta_{\delta}=c_0\mathcal E_{N_{\delta},L_{\delta}}^{\theta}$. By the optimality of \(f^\delta\), we obtain
\[
\|A(f^\delta)-g^\delta\|_{\mathcal Y}
+
\beta_\delta\mathcal R(f^\delta)
\le
\|A(\phi_\delta)-g^\delta\|_{\mathcal Y}
+
\beta_\delta\mathcal R(\phi_\delta).
\]
Discarding the nonnegative residual term on the left-hand side and using the local Hölder continuity of \(A\), we obtain
\begin{align*}
\mathcal R(f^\delta)
&\le \frac{\|A(\phi_\delta)-g^\delta\|_{\mathcal Y}}{\beta_\delta}+\mathcal R(\phi_\delta)\le \frac{\|A(\phi_\delta)-A(f^\dagger)\|_{\mathcal Y}+\|g-g^\delta\|_{\mathcal Y}}{c_0\mathcal E_{N_\delta,L_\delta}^{\theta}}+C_{f^\dagger} \\
&\le \frac{L_A\|f^\dagger-\phi_\delta\|_{\mathcal X_1}^{\theta}+\delta}{c_0\mathcal E_{N_\delta,L_\delta}^{\theta}}+C_{f^\dagger} \le \frac{L_A}{c_0}+\frac{\delta}{c_0\mathcal E_{N_\delta,L_\delta}^{\theta}}+C_{f^\dagger}.
\end{align*}
By the complexity bounds established in \textbf{(b)}, for sufficiently
small \(\delta\), we have
\(N_\delta\le C_NN_{\rm tar}(\delta)\) and
\(L_\delta\le C_LL_{\rm tar}(\delta)\). Since \(\mathcal E_{N,L}\) is
nonincreasing in each argument, Assumption~\ref{ass:expansion_generic}(iii)
and the target choice
\(q_0\mathcal E_\delta\le
\mathcal E_{N_{\rm tar}(\delta),L_{\rm tar}(\delta)}\le \mathcal E_\delta\) give
\[
\mathcal E_{N_\delta,L_\delta}
\ge
\mathcal E_{C_NN_{\rm tar}(\delta),\,C_LL_{\rm tar}(\delta)}
\ge
q_{\rm sc}\mathcal E_{N_{\rm tar}(\delta),L_{\rm tar}(\delta)}
\ge
q_{\rm sc}q_0\mathcal E_\delta.
\]
Therefore $\frac{\delta}{\mathcal E_{N_\delta,L_\delta}^{\theta}}
\le
\frac{\delta}{(q_{\rm sc}q_0)^\theta\mathcal E_\delta^\theta}$, which is uniformly bounded by the definition of \(\mathcal E_\delta\). Thus \(\mathcal R(f^\delta)\) is uniformly bounded for
Algorithm~\ref{alg:main_algorithm}.

For Algorithm~\ref{alg:two_stage}, the stopping rule gives
$J^\delta(f^\delta)\le \tau\delta$, hence
$\beta_\delta\mathcal R(f^\delta)\le \tau\delta$. Since
$\beta_\delta=c_0\mathcal E_{N_{\delta},L_{\delta}}^{\theta}$, and using the same lower bound derived above, we have
\[
    \mathcal R(f^\delta)
\le
\frac{\tau\delta}
{c_0\mathcal E_{N_{\delta},L_{\delta}}^{\theta}}\le \frac{\tau\delta}
{c_0(q_{\rm sc}q_0)^\theta\mathcal E_\delta^\theta}.
\]
Since \(\delta/\mathcal E_\delta^\theta\) is uniformly bounded,
\(\mathcal R(f^\delta)\) is uniformly bounded in both cases.

Now let $\delta_n\downarrow0$. Since $\mathcal R(f^{\delta_n})$ is uniformly bounded and $\mathcal R$ is coercive, the sequence $\{f^{\delta_n}\}$ is bounded in $\mathcal X_1$. By reflexivity, there exist a subsequence, not relabeled, and an element $f^*\in\mathcal X_1$ such that $f^{\delta_n}\rightharpoonup f^*$ in $\mathcal X_1$. By Assumption~\ref{ass:forward_full}(ii), we have
$A(f^{\delta_n})\rightharpoonup A(f^*)$ in $\mathcal Y$. For Algorithm~\ref{alg:main_algorithm}, the stopping rule yields $\|A(f^{\delta_n})-g^{\delta_n}\|_{\mathcal Y}\le \tau\delta_n$. For Algorithm~\ref{alg:two_stage}, since $\|A(f^{\delta_n})-g^{\delta_n}\|_{\mathcal Y}\le J^{\delta_n}(f^{\delta_n})\le \tau\delta_n$, the same estimate holds. Therefore, in either case,
\[
\|A(f^{\delta_n})-g\|_{\mathcal Y}
\le
\|A(f^{\delta_n})-g^{\delta_n}\|_{\mathcal Y}+\|g^{\delta_n}-g\|_{\mathcal Y}
\le
(\tau+1)\delta_n\to0.
\]
Hence $A(f^{\delta_n})\to g$ strongly in $\mathcal Y$. Since strong convergence implies weak convergence, the uniqueness of weak limits yields
$A(f^*)=g=A(f^\dagger)$. By Assumption~\ref{ass:forward_full}(iii), the equation
$A(f)=g$ admits the unique solution $f^\dagger$ in $\mathcal X_1$. Hence
$f^*=f^\dagger$.
Since the above argument applies to every subsequence of $\{f^{\delta_n}\}$, every
subsequence admits a further subsequence converging weakly to $f^\dagger$ in
$\mathcal X_1$. By the standard subsequence criterion, the whole sequence
$\{f^{\delta_n}\}$ converges weakly to $f^\dagger$ in $\mathcal X_1$.

Finally, suppose that $\mathcal X_1$ is compactly embedded into a Banach space $\mathcal X_2$. Then the bounded sequence $\{f^{\delta_n}\}$ admits a further subsequence, again not relabeled, such that $f^{\delta_n}\to \tilde f$ in $\mathcal X_2$ for some $\tilde f\in\mathcal X_2$. Since the embedding $\mathcal X_1\hookrightarrow\mathcal X_2$ is continuous, the weak convergence in $\mathcal X_1$ implies weak convergence in $\mathcal X_2$, so $f^{\delta_n}\rightharpoonup f^\dagger$ in $\mathcal X_2$. Since strong convergence implies weak convergence and weak limits are unique in $\mathcal X_2$, we obtain $\tilde f=f^\dagger$. Therefore, every subsequence of $\{f^{\delta_n}\}$ has a further subsequence
converging strongly to $f^\dagger$ in $\mathcal X_2$. This implies the strong
convergence of the whole sequence. Indeed, otherwise there would exist
$\varepsilon>0$ and a subsequence such that
\[
\|f^{\delta_n}-f^\dagger\|_{\mathcal X_2}\ge \varepsilon
\qquad \text{for all } n,
\]
which contradicts the existence of a further subsequence converging strongly to
$f^\dagger$ in $\mathcal X_2$. Hence $f^{\delta_n}\to f^\dagger$ strongly in $\mathcal X_2$.

\end{proof}

\section{Experimental results}
\label{sec:simulation}

In this section, we present numerical experiments on three classical ill-posed inverse problems to empirically validate the proposed expanding deep neural network schemes (Algorithms~\ref{alg:main_algorithm} and \ref{alg:two_stage}). Specifically, these experiments are designed to illustrate the regularization properties and convergence results established in Theorem~\ref{maintheorem}, and to demonstrate that the discrepancy principle serves as an effective stopping criterion for yielding accurate, noise-robust reconstructions. In all examples, the noisy data $g^\delta$ are generated by adding random noise to the exact data $g$ such that $\|g^\delta - g\|_{\mathcal{Y}} \le \delta$. To instantiate the theoretical framework, we adopt the canonical regularizer $\mathcal R(f):=\|f\|_{\mathcal X_1}$ as discussed in Remark~\ref{rem:R_choice}.

A crucial practical observation regarding Algorithm~\ref{alg:two_stage} is that, although it incorporates a two-stage mechanism to ensure finite termination theoretically, the discrepancy criterion $S_{k,0}^{2,\delta} \le \tau \delta$ is consistently met during Phase~I in our experiments. This occurs because the practical expressivity of deep neural networks significantly surpasses the worst-case approximation bounds ($N_{\rm tar}, L_{\rm tar}$) derived in Theorem~\ref{thm:sobolev_approx}. Consequently, the expanding architecture combined with a heuristically increasing radius $r_k^{\mathrm I}$ provides sufficient capacity to capture the regularized solution within the noise tolerance. Since Phase~II is never triggered in practice, we exclusively report the architectural evolution and results from Phase~I.

All numerical experiments are implemented in Python using the PyTorch framework and executed on a workstation with two NVIDIA L40 GPUs. The source code is publicly available at \url{https://github.com/z1998w/DNNip2}. To ensure an unbiased algorithmic comparison, we employ a unified ReLU network expansion schedule: width $N_k=\max \{4d,2^{kd}+1\}$ and depth $L_k=2k+3$. This schedule is directly motivated by the H\"older approximation in Theorem~\ref{thm:explicit_weights}, and is therefore particularly natural for Algorithm~\ref{alg:main_algorithm}. For Algorithm~\ref{alg:two_stage}, although this choice is not tied to its underlying approximation theorem, it still provides a valid monotone and unbounded architecture expansion path. We adopt this same schedule in the experiments to place both algorithms on an identical architectural scale and thereby enable a fair comparison of their stopping behavior and reconstruction performance. While the architecture schedule is unified, the parameter restrictions are matched to their respective theoretical frameworks. Specifically, Algorithm~\ref{alg:main_algorithm} enforces the theoretical explicit radius bound $r_k = 2(k d \vee \|f^\dagger\|_{\infty}) 2^{k(d \vee(p \alpha))}$. In contrast, Phase~I of Algorithm~\ref{alg:two_stage} utilizes a heuristic linearly growing exploratory radius $r_k^{\mathrm I} = 1000k$. Finally, to prevent severe gradient instability and prohibitive computational costs, we restrict the maximum expansion index to $k=5$.

\begin{table}[htpb]
\centering
\caption{Problem-specific hyperparameters and discretization sizes. Hyphens (-) indicate that the corresponding parameter is not applicable because Algorithm~\ref{alg:main_algorithm} lacks an explicit radius bound for Example~\ref{ex:EIT}.}\label{tab:hyperparameters}
\renewcommand{\arraystretch}{1.2}
\setlength{\tabcolsep}{8pt}
\begin{tabular}{c|lccc}
\toprule
\multicolumn{2}{c}{Settings}
& Example~\ref{ex:conv} 
& Example~\ref{ex:heat}
& Example~\ref{ex:EIT} \\
\midrule
\multirow{2}{*}{Algorithm~\ref{alg:main_algorithm}} 
  & $c_0$  & $0.02$ & $0.024$ & - \\
  & $\tau$ & $1.6$   & $1.02$   & - \\
\midrule
\multirow{2}{*}{Algorithm~\ref{alg:two_stage}} 
  & $c_0$  & $10^{-8}$ & $10^{-8}$ & $8\times 10^{-7}$ \\
  & $\tau$ & $1.05$ & $1.2$ & $1.2$ \\
\midrule
\multirow{2}{*}{Grid} 
  & $M_{\text{train}}$ & $100\times 100$ & $100\times 100$ & $50\times 50$ \\
  & $M_{\text{test}}$  & $200\times 200$ & $200\times 200$ & $100\times 100$ \\
\bottomrule
\end{tabular}
\end{table}
During the optimization process, networks are trained using full-batch gradient descent with a maximum of 50,000 epochs per expansion stage. We utilize the Adam optimizer (seed 2026) with an initial learning rate of $10^{-3}$, paired with a ReduceLROnPlateau scheduler that halves the learning rate upon 2000 epochs of stagnation (down to a minimum of $10^{-6}$). 


For evaluation, the domain $\Omega$ is discretized into a uniform training grid $\mathcal{X}_{\text{train}} = \{x_j\}_{j=1}^{M_{\text{train}}}$ for network fitting, and an independent test grid $\mathcal{X}_{\text{test}} = \{z_i\}_{i=1}^{M_{\text{test}}}$ for error assessment. The reconstruction accuracy is measured via the relative discrete $L^2$ error:
\[
  e_f := \frac{\| f^\delta - f^\dagger \|_{L^2(\mathcal{X}_{\text{test}})}}{\| f^\dagger \|_{L^2(\mathcal{X}_{\text{test}})}} 
  = \left( \frac{\sum_{i=1}^{M_{\text{test}}} \bigl| f^\delta(z_i) - f^\dagger(z_i) \bigr|^2}{\sum_{i=1}^{M_{\text{test}}} \bigl| f^\dagger(z_i) \bigr|^2} \right)^{1/2}.
\]
The problem-specific hyperparameters and grid configurations for the three examples are summarized in Table~\ref{tab:hyperparameters}.

\begin{figure}[H]
  \centering
  \includegraphics[width=0.9\linewidth]{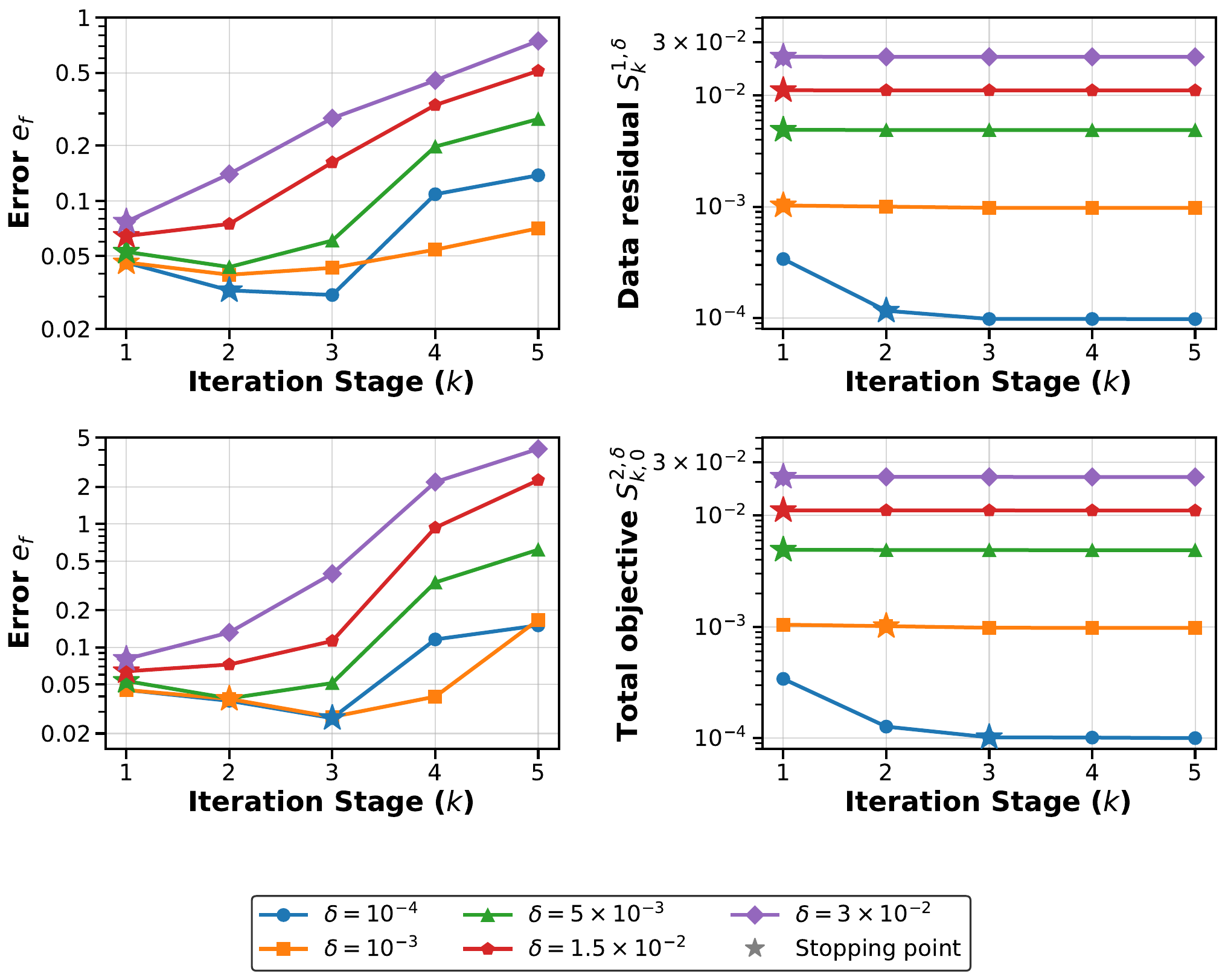}
  \caption{Example~\ref{ex:conv}: Evolution of the relative test error $e_f$ (left) and the stopping criterion value (right) for Algorithm~\ref{alg:main_algorithm} ($S_k^{1,\delta}$) and Algorithm~\ref{alg:two_stage} ($S_{k,0}^{2,\delta}$) under different noise levels $\delta$. Stars indicate the first iteration at which the stopping criterion is satisfied.}
  \label{fig:deconv_error_residuals}
\end{figure}

\begin{example}[Fredholm Integral Equation / Deconvolution]\label{ex:conv}
We consider a two-dimensional Fredholm integral equation of the first kind on the unit square \(\Omega=[0,1]^2\), corresponding to a Gaussian deconvolution problem. The forward operator is given by
\[
A(f)(\boldsymbol{x})=\int_{\Omega}\kappa(\boldsymbol{x},\boldsymbol{y})f(\boldsymbol{y})\,d\boldsymbol{y}=g(\boldsymbol{x}),
\]
where $\kappa(\boldsymbol{x}, \boldsymbol{y}) = \frac{1}{2\pi\ell_1^2} \exp\left(-\frac{|\boldsymbol{x}-\boldsymbol{y}|^2}{2\ell_1^2}\right)$ with $\ell_1=0.1$. The exact solution is chosen as \(f^\dagger(x_1,x_2)=0.1\sin(\pi x_1)\sin(\pi x_2)\).
\end{example}

\paragraph{Verification for Algorithm~\ref{alg:main_algorithm}}
We set \(\mathcal X_1=\mathcal X_A=\mathcal Y=L^2(\Omega)\). Since \(f^\dagger\) is smooth, it satisfies the Lipschitz condition required by Theorem~\ref{thm:explicit_weights} with \(\alpha=1\), and hence Assumption~\ref{ass:generic_approx} holds. Assumption~\ref{ass:X1_pivot_general} is satisfied by setting \(\mathcal X_0=H^{-1}(\Omega)\). For Assumption~\ref{ass:forward_full}, condition (i) follows immediately from \(\mathcal X_1=\mathcal X_A=L^2(\Omega)\). Moreover, \(A\) is a bounded linear operator on \(L^2(\Omega)\) by Young's inequality. Therefore, condition (ii) holds by sequential weak-to-weak continuity of bounded linear operators, and condition (iv) holds with \(\theta=1\). Finally, condition (iii) follows from the injectivity induced by the strict positive definiteness of the Gaussian kernel.
\begin{figure}[H]
  \centering
  \includegraphics[width=\linewidth]{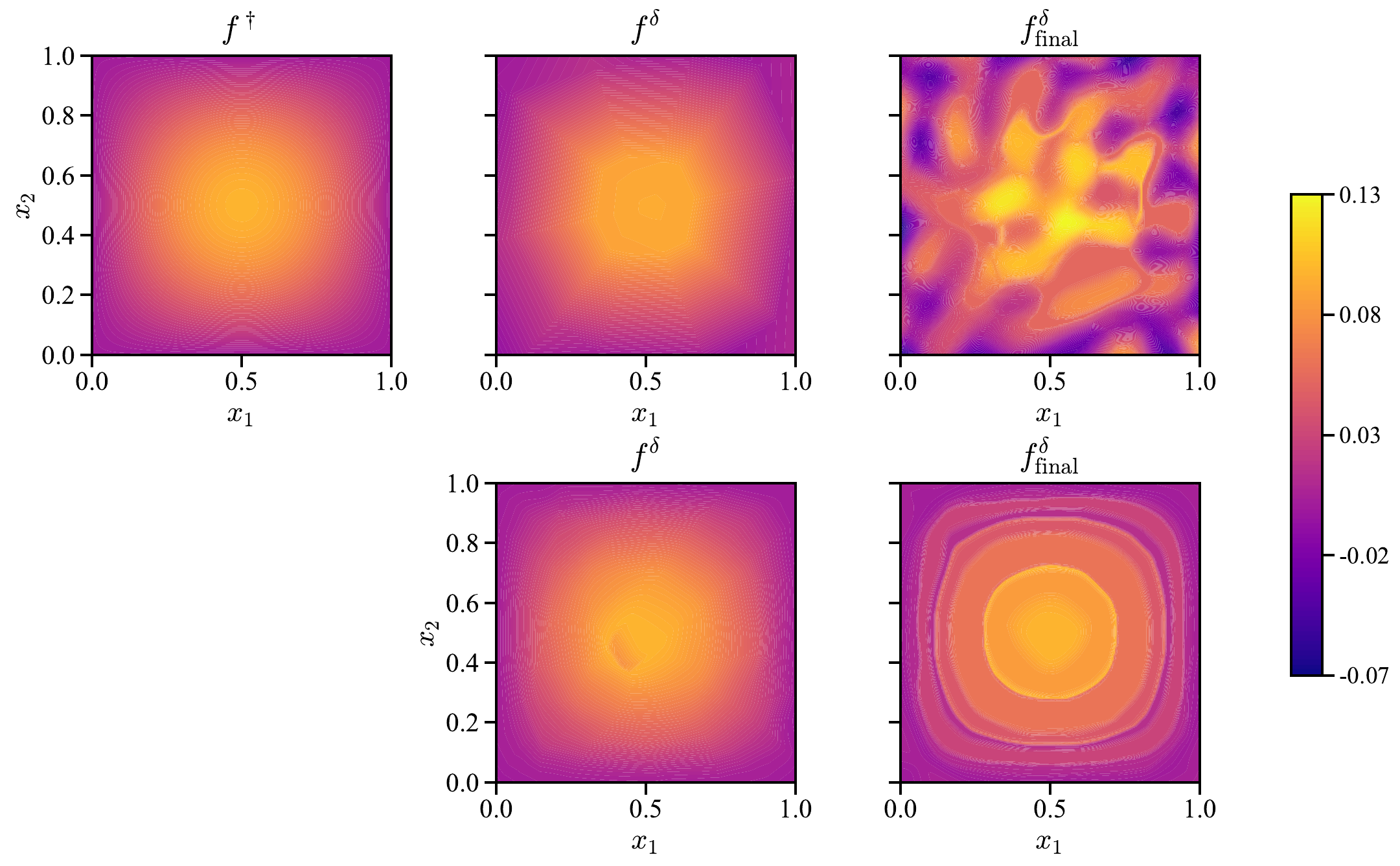}
  \caption{Example \ref{ex:conv}: True solution \(f^\dagger\) and the corresponding reconstructions \(f^\delta\) and \(f_{\text{final}}^\delta\) obtained by Algorithm~\ref{alg:main_algorithm} (first row) and Algorithm~\ref{alg:two_stage} (second row) for \(\delta = 0.0001\). Here, \(f_{\text{final}}^\delta\) denotes the reconstruction at the final expansion stage (\(k=5\)).}
  \label{fig:deconv_reconstructions}
\end{figure}

\paragraph{Verification for Algorithm~\ref{alg:two_stage}}
We set \(\mathcal X_1=H^1(\Omega)\), and \(\mathcal X_A=\mathcal Y=L^2(\Omega)\). By Theorem~\ref{thm:sobolev_approx} with \(s=2\) and \(p=2\), Assumption~\ref{ass:generic_approx} holds for the present example. Assumption~\ref{ass:X1_pivot_general} is satisfied by setting \(\mathcal X_0=L^2(\Omega)\). For Assumption~\ref{ass:forward_full}, well-definedness follows from the continuous embedding \(H^1(\Omega)\hookrightarrow L^2(\Omega)\), and injectivity is inherited from the \(L^2\)-setting. Furthermore, the compact embedding \(H^1(\Omega)\hookrightarrow\hookrightarrow L^2(\Omega)\) implies that weak convergence in \(H^1(\Omega)\) yields strong convergence in \(L^2(\Omega)\), and hence the required weak-to-weak continuity. Finally, let $\|A\|_{\mathrm{op}}$ denote the operator norm of $A$ on $L^2(\Omega)$ and $C_{\mathrm{emb}}$ denote the embedding constant of $H^1(\Omega)\hookrightarrow L^2(\Omega)$. Using the boundedness of $A$ together with this continuous embedding, we obtain$$\|A(f)-A(f^\dagger)\|_{L^2(\Omega)} \le \|A\|_{\mathrm{op}}\,\|f-f^\dagger\|_{L^2(\Omega)} \le \|A\|_{\mathrm{op}}\,C_{\mathrm{emb}}\|f-f^\dagger\|_{H^1(\Omega)}.$$This verifies Assumption~\ref{ass:forward_full}(iv) with $\theta=1$ and the corresponding constant $L_A = \|A\|_{\mathrm{op}}\,C_{\mathrm{emb}}$.

\begin{figure}[H]
\centering
\begin{subfigure}[t]{0.36\linewidth}
  \centering
  \includegraphics[width=\linewidth]{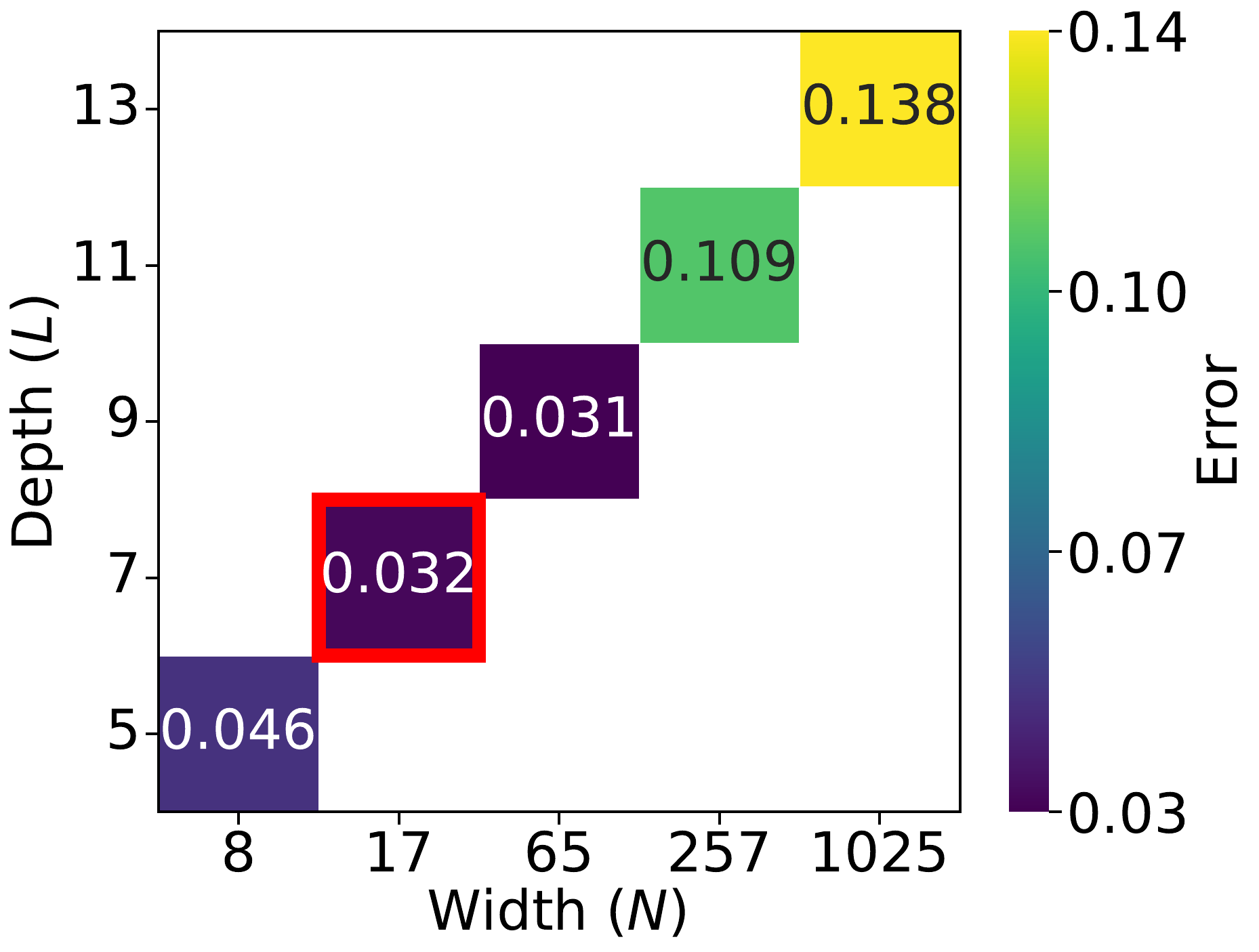}
  \caption{$\delta=0.0001$}
  \label{fig:Ex1_A1_results_delta_0_00001}
\end{subfigure}
\begin{subfigure}[t]{0.36\linewidth}
  \centering
  \includegraphics[width=\linewidth]{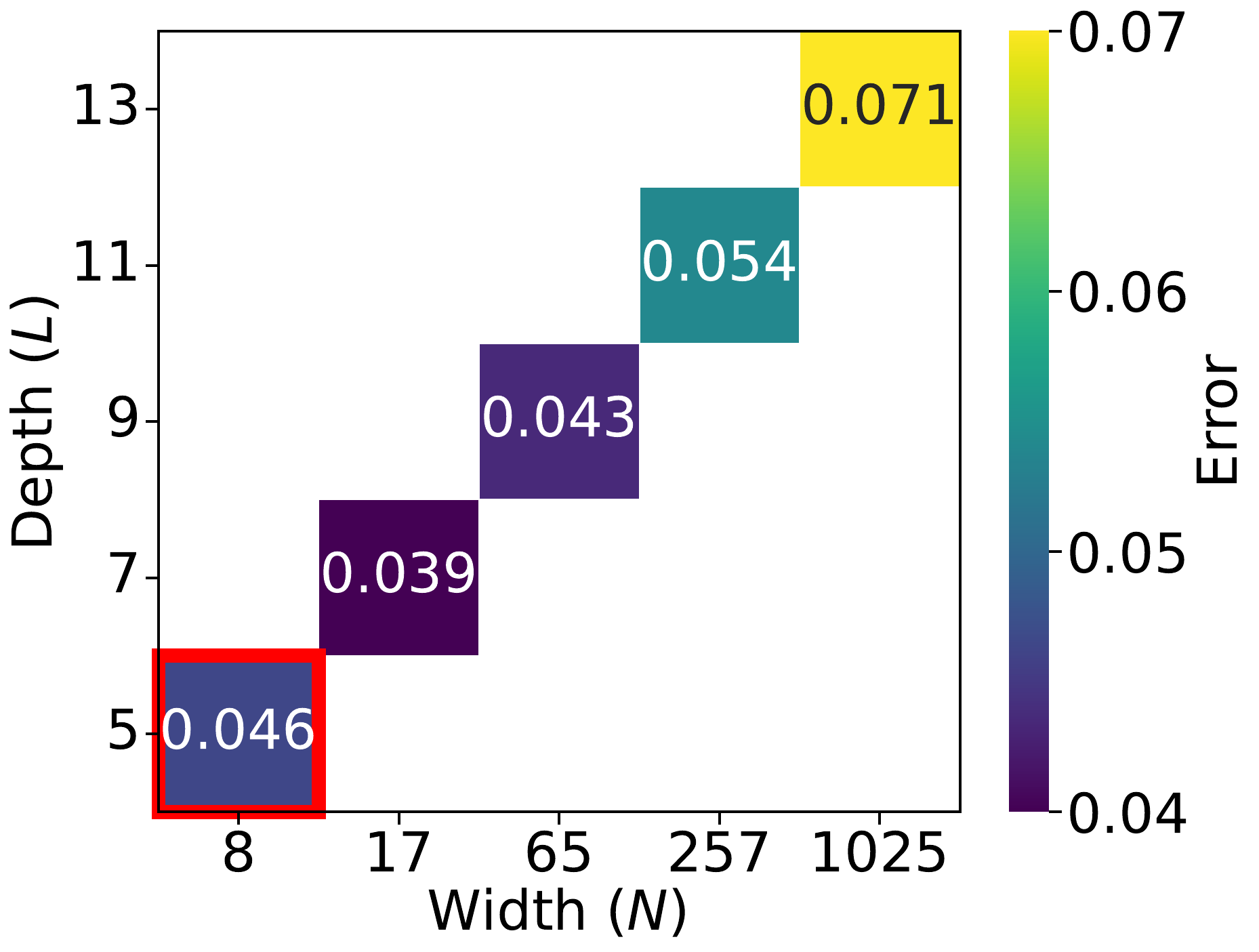}
  \caption{$\delta=0.001$}
  \label{fig:Ex1_A1_results_delta_0_001}
\end{subfigure}
\begin{subfigure}[t]{0.36\linewidth}
  \centering
  \includegraphics[width=\linewidth]{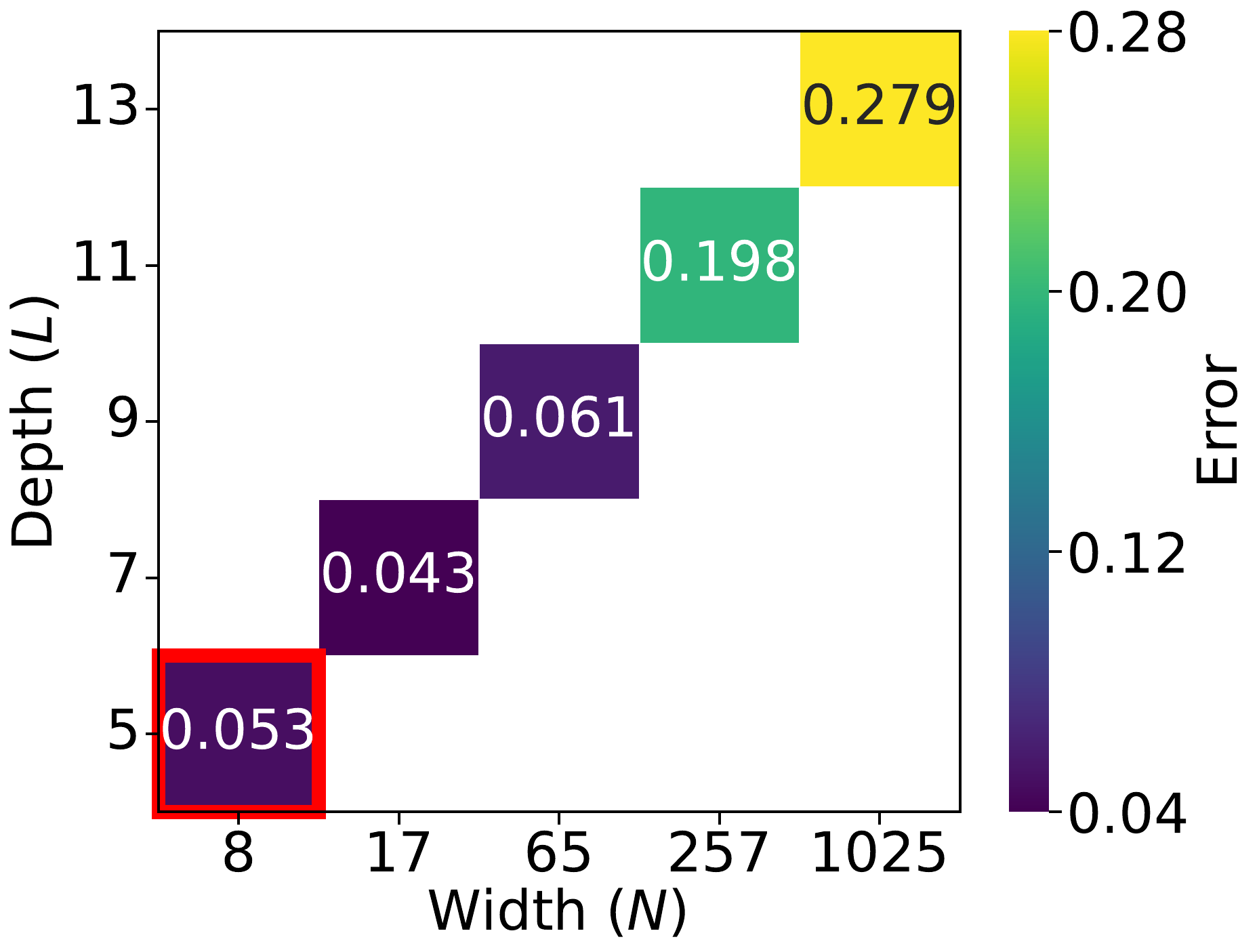}
  \caption{$\delta=0.005$}
  \label{fig:Ex1_A1_results_delta_0_005}
\end{subfigure}
\begin{subfigure}[t]{0.36\linewidth}
  \centering
  \includegraphics[width=\linewidth]{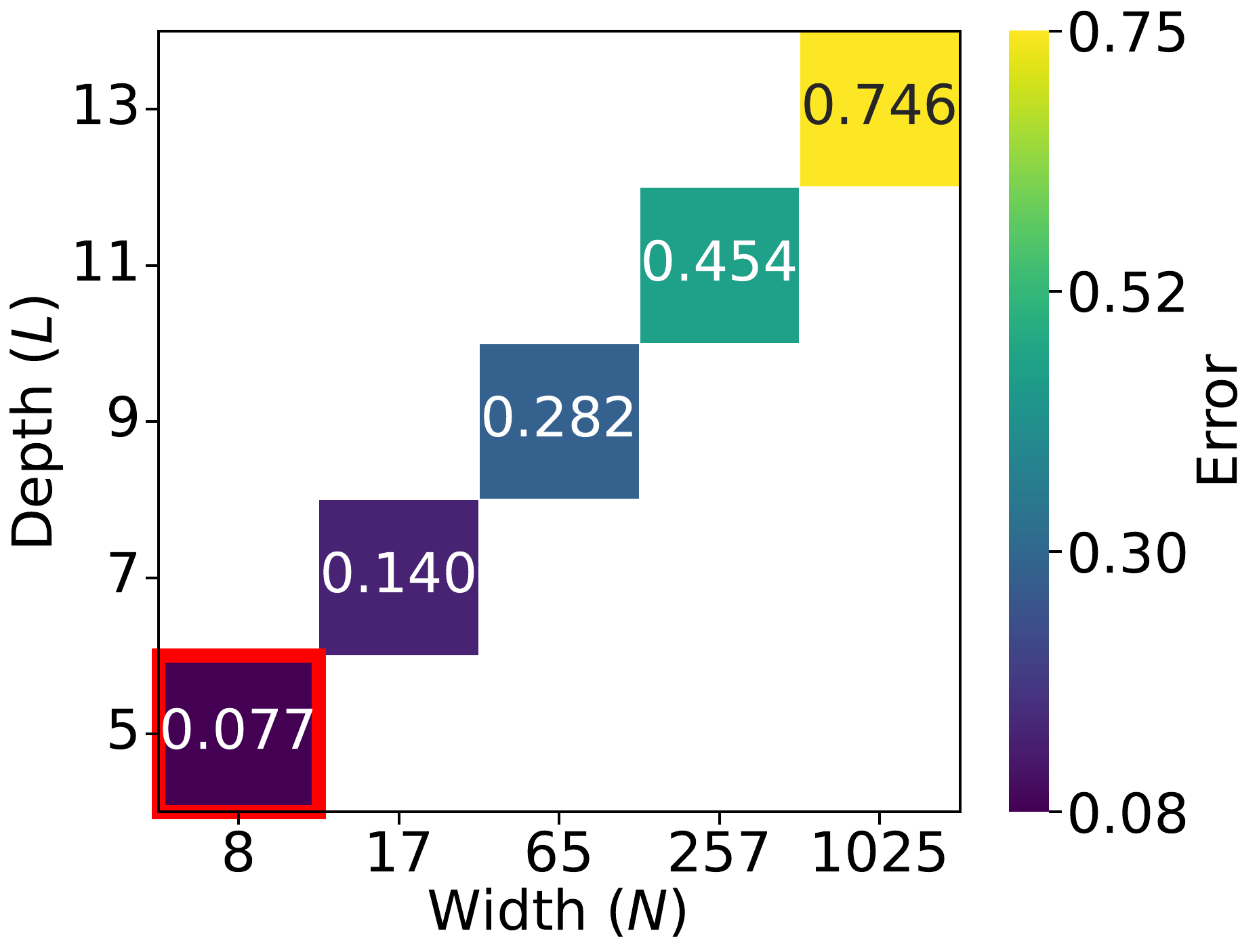}
  \caption{$\delta=0.03$}
  \label{fig:Ex1_A1_results_delta_0_03}
\end{subfigure}

\vspace{0.6ex}

\begin{subfigure}[t]{0.36\linewidth}
  \centering
  \includegraphics[width=\linewidth]{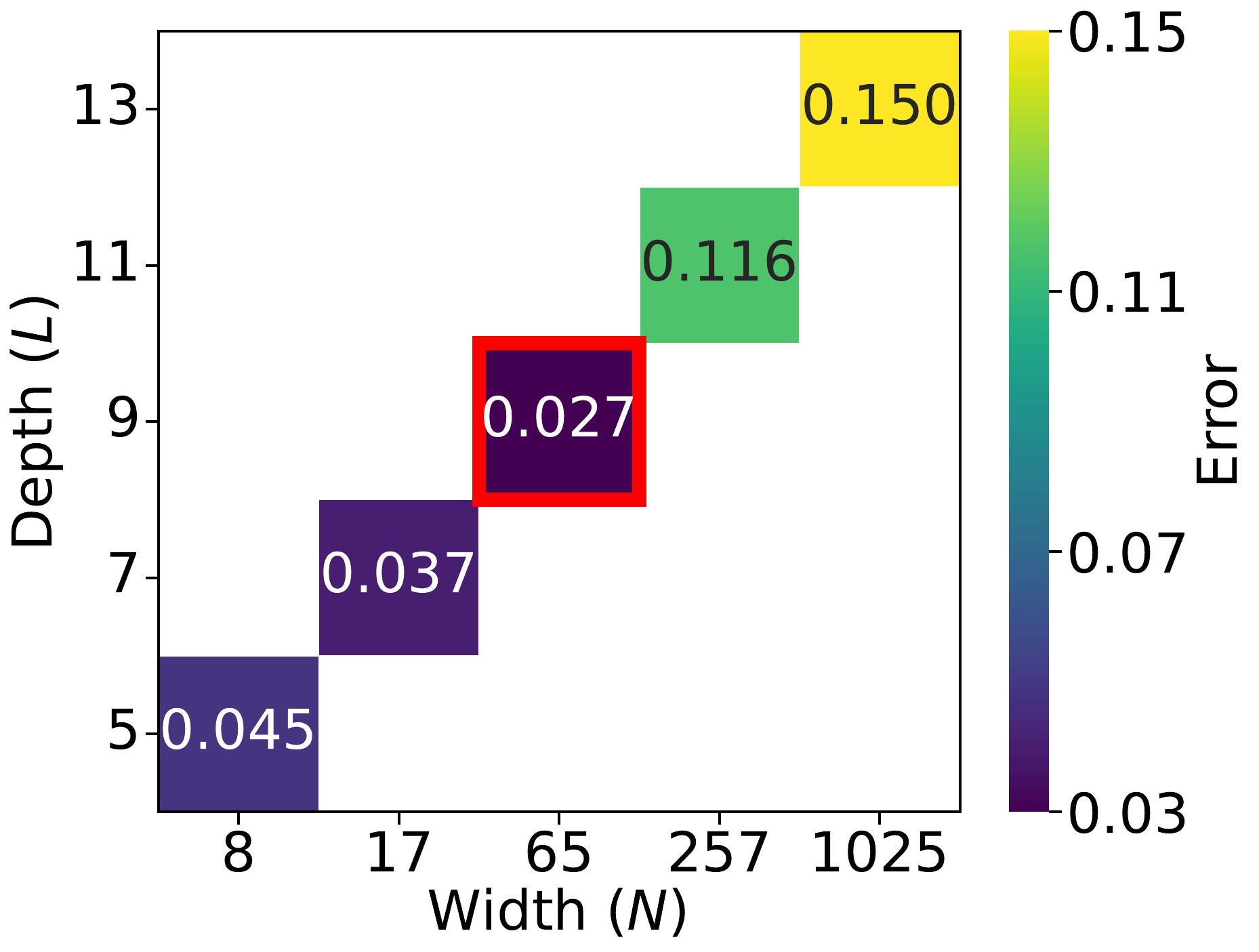}
  \caption{$\delta=0.0001$}
  \label{fig:Ex1_A2_results_delta_0_0001}
\end{subfigure}
\begin{subfigure}[t]{0.36\linewidth}
  \centering
  \includegraphics[width=\linewidth]{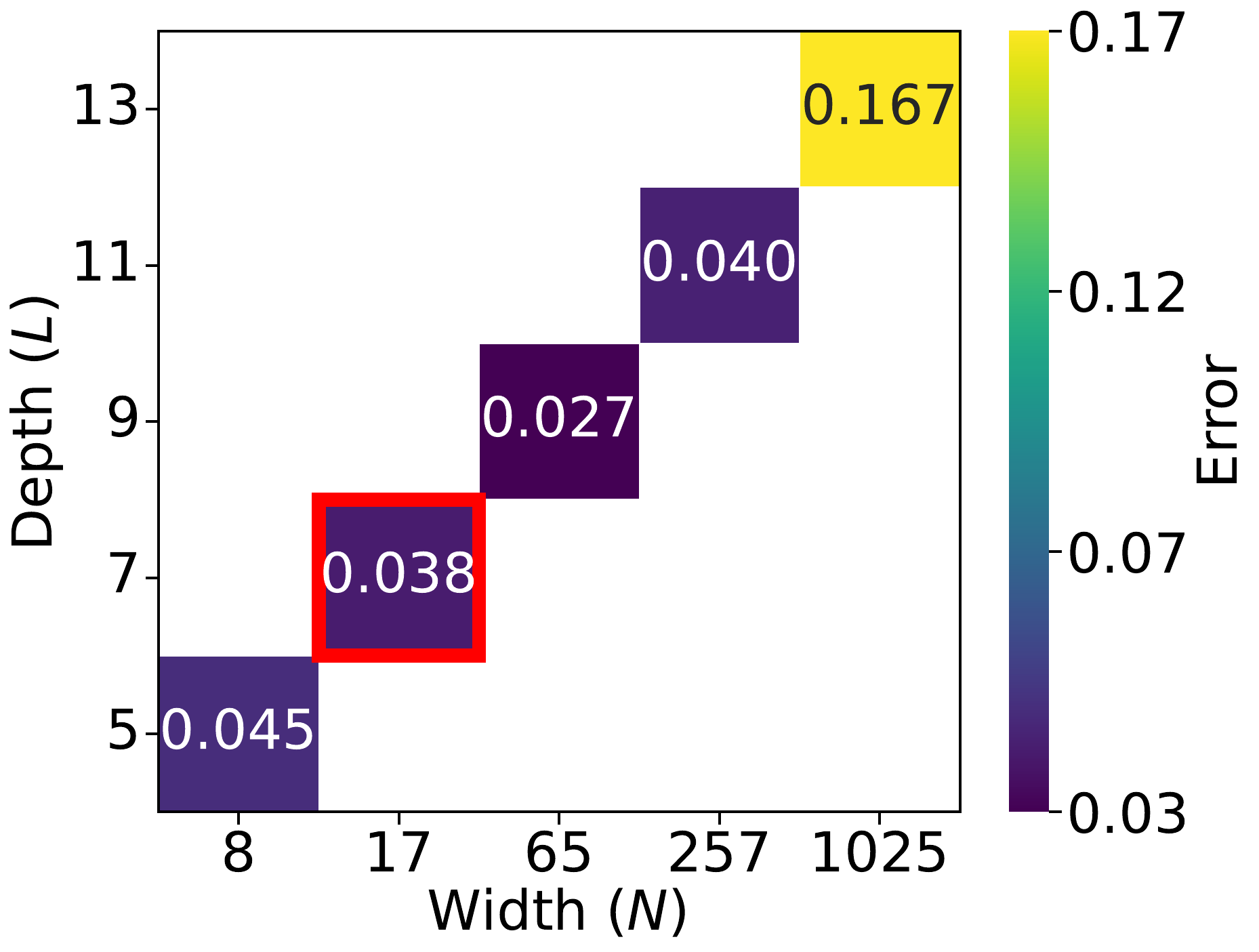}
  \caption{$\delta=0.001$}
  \label{fig:Ex1_A2_results_delta_0_001}
\end{subfigure}
\begin{subfigure}[t]{0.36\linewidth}
  \centering
  \includegraphics[width=\linewidth]{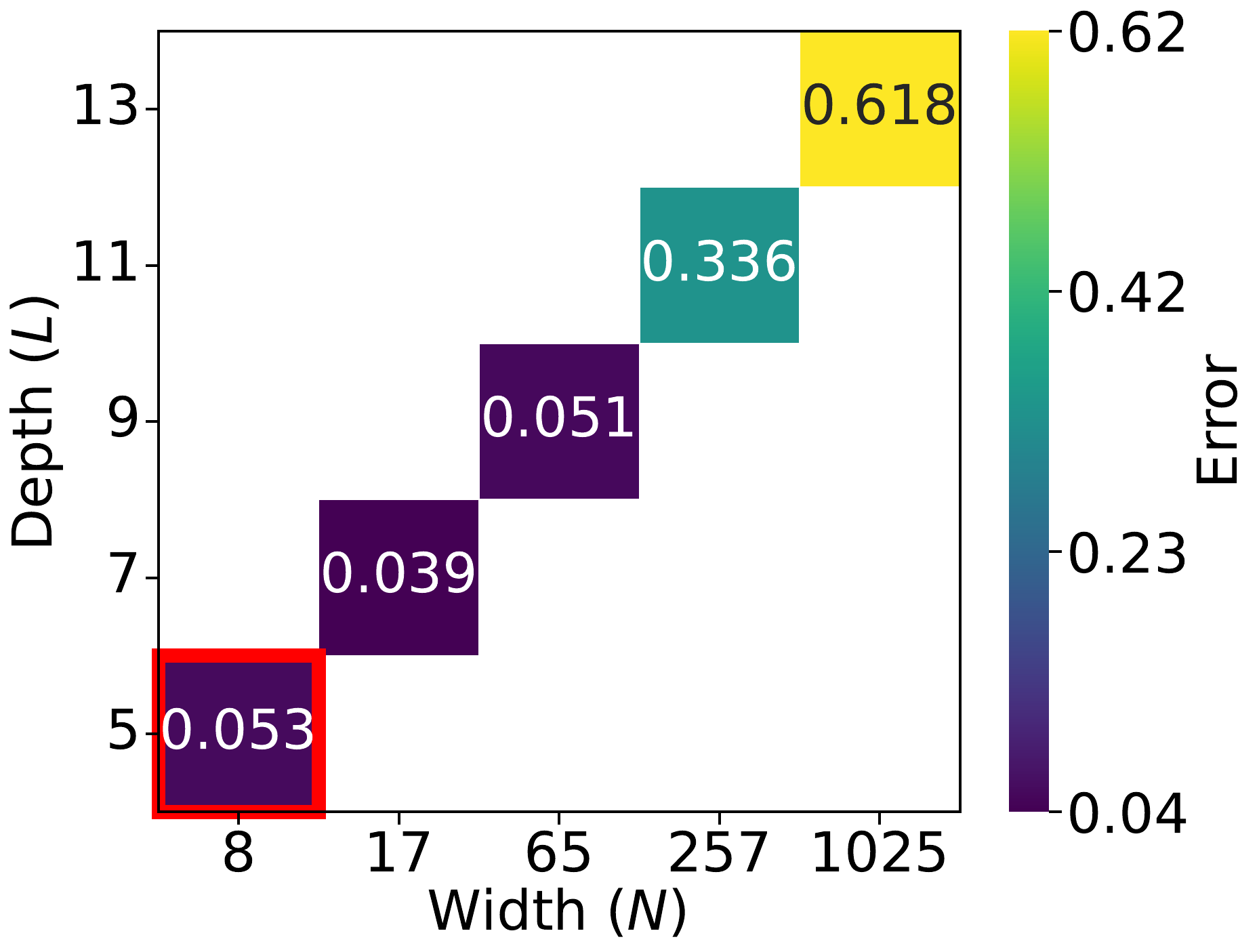}
  \caption{$\delta=0.005$}
  \label{fig:Ex1_A2_results_delta_0_005}
\end{subfigure}
\begin{subfigure}[t]{0.36\linewidth}
  \centering
  \includegraphics[width=\linewidth]{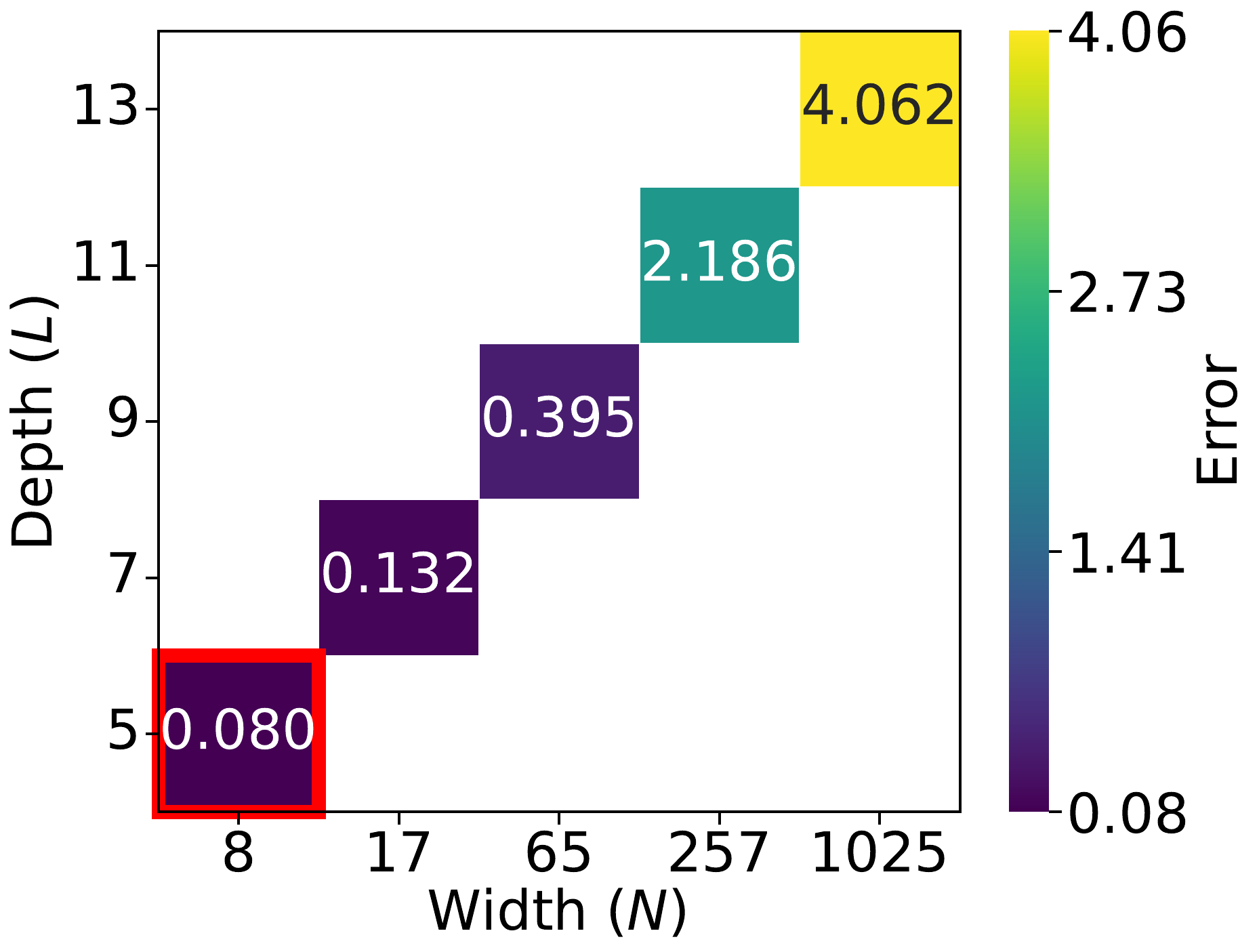}
  \caption{$\delta=0.03$}
  \label{fig:Ex1_A2_results_delta_0_03}
\end{subfigure}

\caption{Example \ref{ex:conv}: Relative \(L^2\) error \(e_f\) of the predicted \(f\) obtained by Algorithm~\ref{alg:main_algorithm} (first row) and Algorithm~\ref{alg:two_stage} (second row) for different network architectures (\(L\) layers, \(N\) neurons). Each column corresponds to a different noise level \(\delta\) in the data \(g\). The red box marks the first architecture for which the stopping criterion is satisfied.}
\label{fig:deconv_delta_grid}
\end{figure}

\begin{example}[Backward Heat Conduction]\label{ex:heat}
We consider the backward heat conduction problem on the unit square \(\Omega=[0,1]^2\) governed by
\begin{equation}\label{eq:heat_poly}
\left\{
\begin{aligned}
    \partial_t u-\Delta u &= 0, && (\boldsymbol{x},t)\in\Omega\times(0,T],\\
    u &= 0, && (\boldsymbol{x},t)\in\partial\Omega\times[0,T],\\
    u(\boldsymbol{x},0) &= f(\boldsymbol{x}), && \boldsymbol{x}\in\Omega.
\end{aligned}
\right.
\end{equation}
The forward operator is the solution map at final time $T=0.01$, given by $A(f) := u(\cdot, T) = g$, where $g$ denotes the final temperature distribution. We choose the exact solution as \(f^\dagger(x_1,x_2)=0.4\,x_1(1-x_1)x_2(1-x_2)\).
\end{example}
\paragraph{Verification for Algorithm~\ref{alg:main_algorithm}}
As in Example~\ref{ex:conv}, we set \(\mathcal X_1=\mathcal X_A=\mathcal Y=L^2(\Omega)\) and \(\mathcal X_0=H^{-1}(\Omega)\). Assumptions~\ref{ass:generic_approx} and \ref{ass:X1_pivot_general} are verified exactly as in Example~\ref{ex:conv}. For Assumption~\ref{ass:forward_full}, condition (i) follows immediately from \(\mathcal X_1=\mathcal X_A=L^2(\Omega)\). Moreover, the heat solution operator is bounded and linear on \(L^2(\Omega)\) by the standard energy estimate; therefore, condition (ii) holds by sequential weak-to-weak continuity of bounded linear operators, and condition (iv) holds with \(\theta=1\). Finally, condition (iii) follows from the classical backward uniqueness of the heat equation.
\begin{figure}[H]
  \centering
  \includegraphics[width=0.9\linewidth]{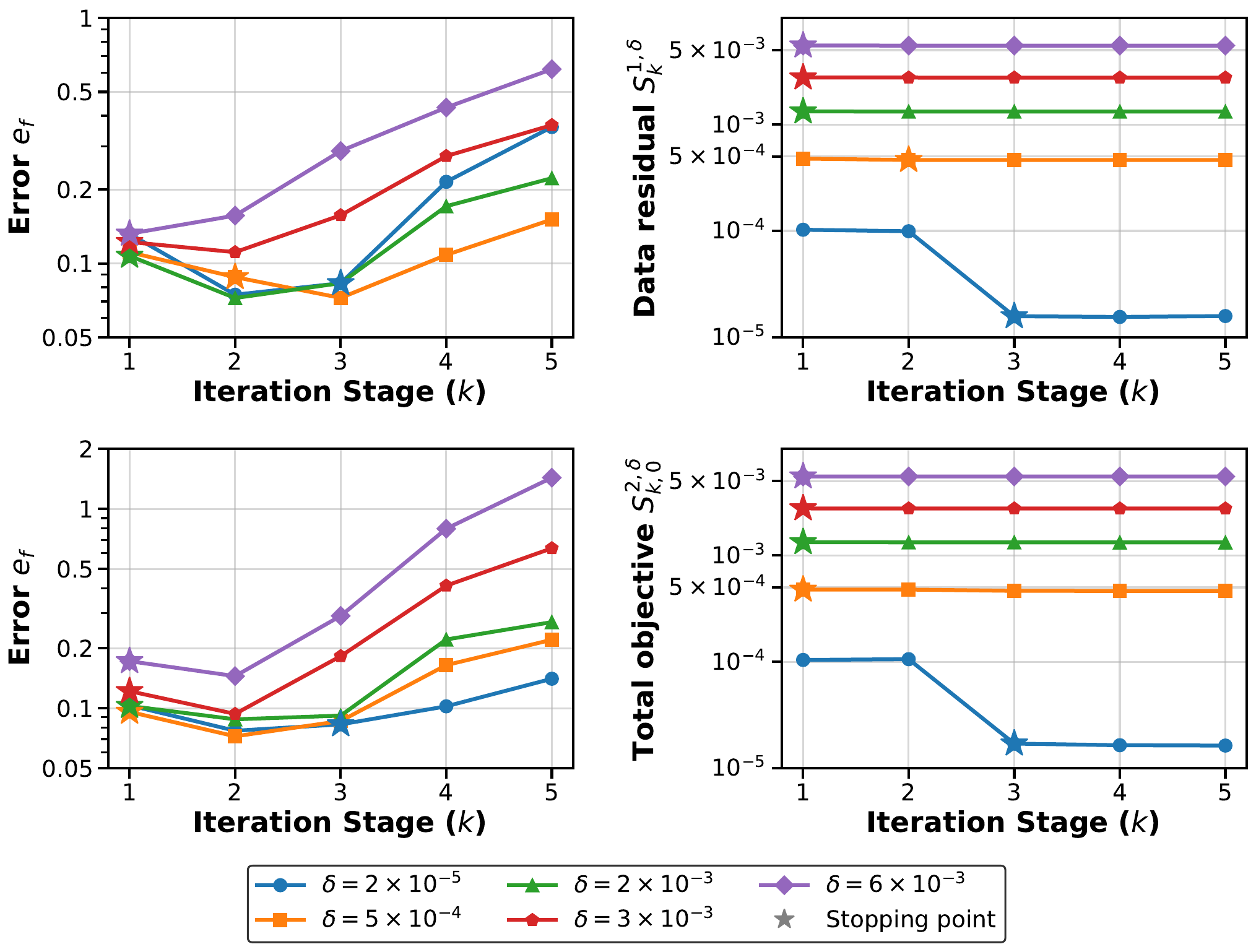}
  \caption{Example \ref{ex:heat}: Evolution of the relative test error $e_f$ (left) and the stopping criterion value (right) for Algorithm~\ref{alg:main_algorithm} ($S_k^{1,\delta}$) and Algorithm~\ref{alg:two_stage} ($S_{k,0}^{2,\delta}$) under different noise levels $\delta$. Stars indicate the first iteration at which the stopping criterion is satisfied.}
  \label{fig:heat_error_residual}
\end{figure}
\begin{figure}[H]
  \centering
  \includegraphics[width=\linewidth]{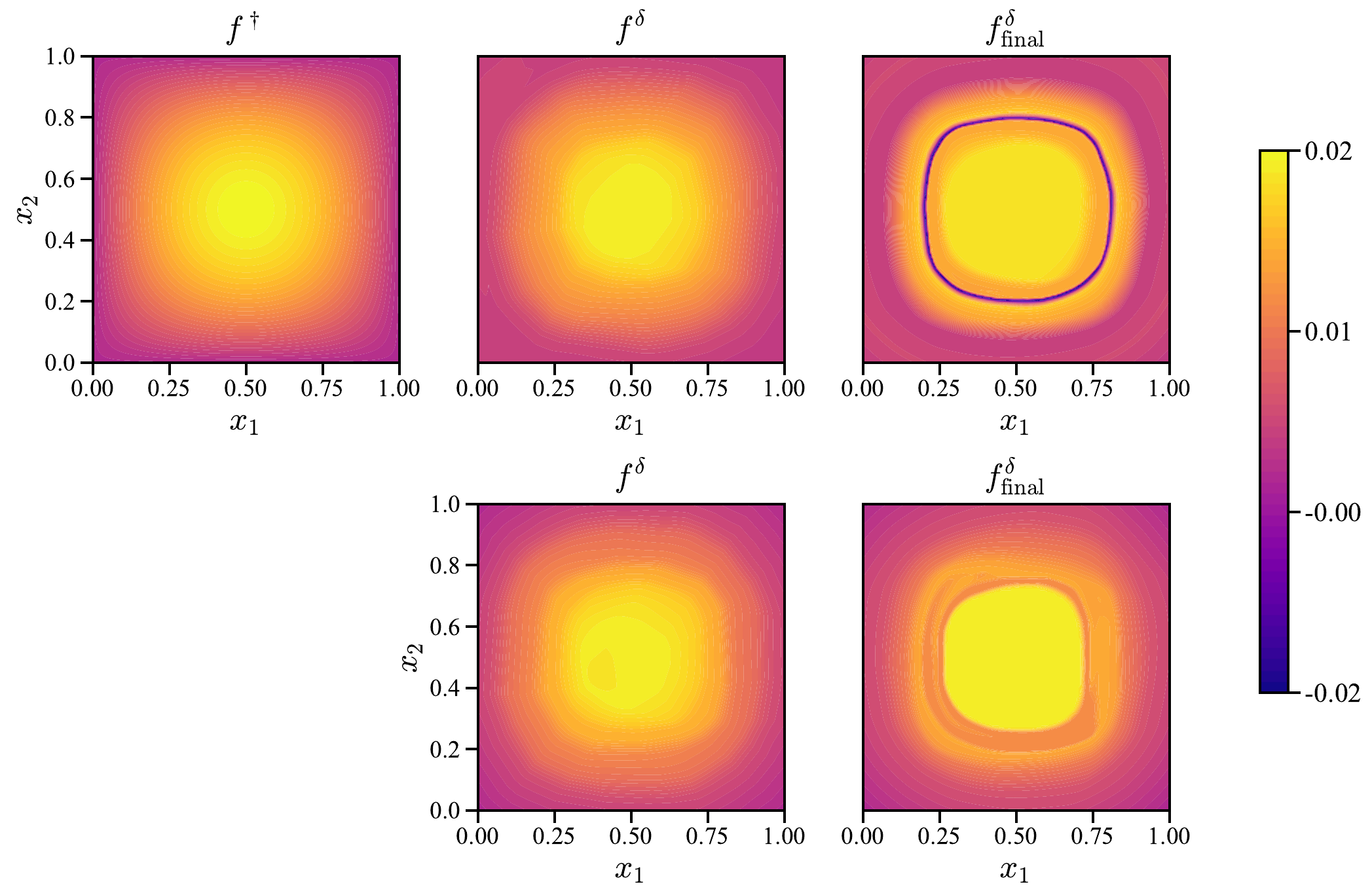}
  \caption{Example \ref{ex:heat}: True solution \(f^\dagger\) and the corresponding reconstructions \(f^\delta\) and \(f_{\text{final}}^\delta\) obtained by Algorithm~\ref{alg:main_algorithm} (first row) and Algorithm~\ref{alg:two_stage} (second row) for \(\delta = 0.00002\). Here, \(f_{\text{final}}^\delta\) denotes the reconstruction at the final expansion stage (\(k=5\)).}
  \label{fig:heat_reconstructions}
\end{figure}

\paragraph{Verification for Algorithm~\ref{alg:two_stage}}
Similarly, by setting \(\mathcal X_1=H^1(\Omega)\), \(\mathcal X_0=L^2(\Omega)\), and \(\mathcal X_A=\mathcal Y=L^2(\Omega)\), Assumptions~\ref{ass:generic_approx}, \ref{ass:X1_pivot_general}, and \ref{ass:forward_full} are verified exactly as in Example~\ref{ex:conv}.

\begin{example}[Electrical Impedance Tomography]\label{ex:EIT}
We consider the non-linear parameter identification problem for electrical impedance tomography (EIT) on the unit square \(\Omega=[0,1]^2\). The governing conductivity equation is
\begin{equation}\label{eq:eit_pde}
    \nabla \cdot \bigl(f(\boldsymbol{x})\nabla u(\boldsymbol{x})\bigr)=0, \qquad \boldsymbol{x}\in\Omega,
\end{equation}
where \(f(\boldsymbol{x})\) denotes the electrical conductivity and \(u(\boldsymbol{x})\) the electrical potential. For a prescribed boundary voltage \(h\), the potential satisfies the Dirichlet condition \(u|_{\partial\Omega}=h\), and the induced 
 boundary current is given by \(f\,\partial_\nu u|_{\partial\Omega}\), where \(\nu\) is the unit outward normal vector. The corresponding forward operator is the Dirichlet-to-Neumann (DN) map \(\Lambda_f:h\mapsto f\,\partial_\nu u|_{\partial\Omega}\). Accordingly, the classical Calder\'on problem of recovering the conductivity from boundary measurements can be formulated as the non-linear operator equation \(A(f)=g\), where \(A(f):=\Lambda_f\) and \(g\) denotes the measured DN data. To ensure uniform ellipticity of \eqref{eq:eit_pde}, the conductivity is required to satisfy \(f(\boldsymbol{x})\ge c>0\). We choose the exact conductivity \(f^\dagger(x_1,x_2)=0.1+0.1\sin(\pi x_1)\sin(\pi x_2)\), which satisfies \(f^\dagger\ge 0.1\) on \(\Omega\).

\begin{figure}[H]
\centering
\begin{subfigure}[t]{0.36\linewidth}
  \centering
  \includegraphics[width=\linewidth]{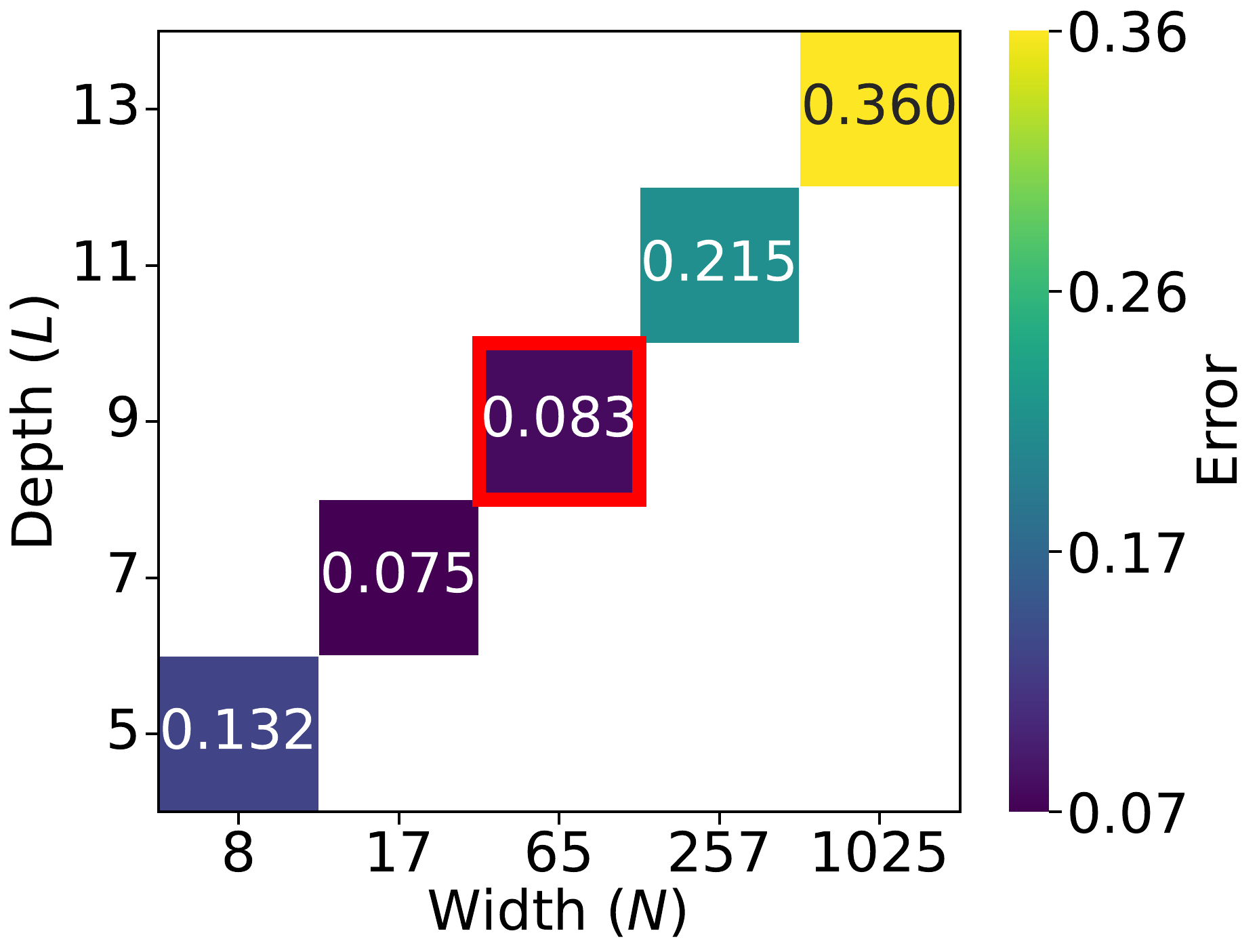}
  \caption{$\delta=0.00002$}
  \label{fig:Ex2_A1_results_delta_0_00002}
\end{subfigure}
\begin{subfigure}[t]{0.36\linewidth}
  \centering
  \includegraphics[width=\linewidth]{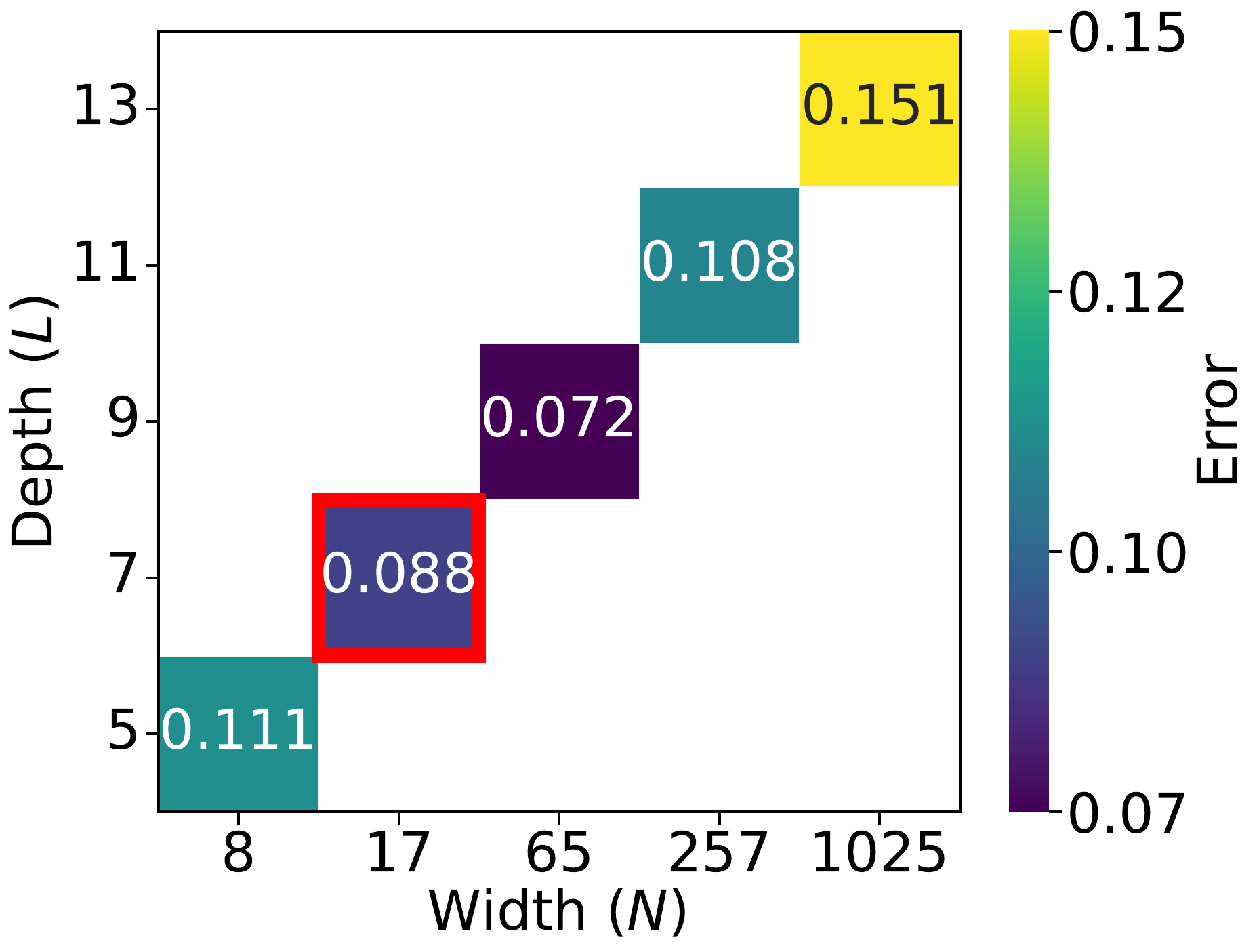}
  \caption{$\delta=0.0005$}
  \label{fig:Ex2_A1_results_delta_0_0005}
\end{subfigure}
\begin{subfigure}[t]{0.36\linewidth}
  \centering
  \includegraphics[width=\linewidth]{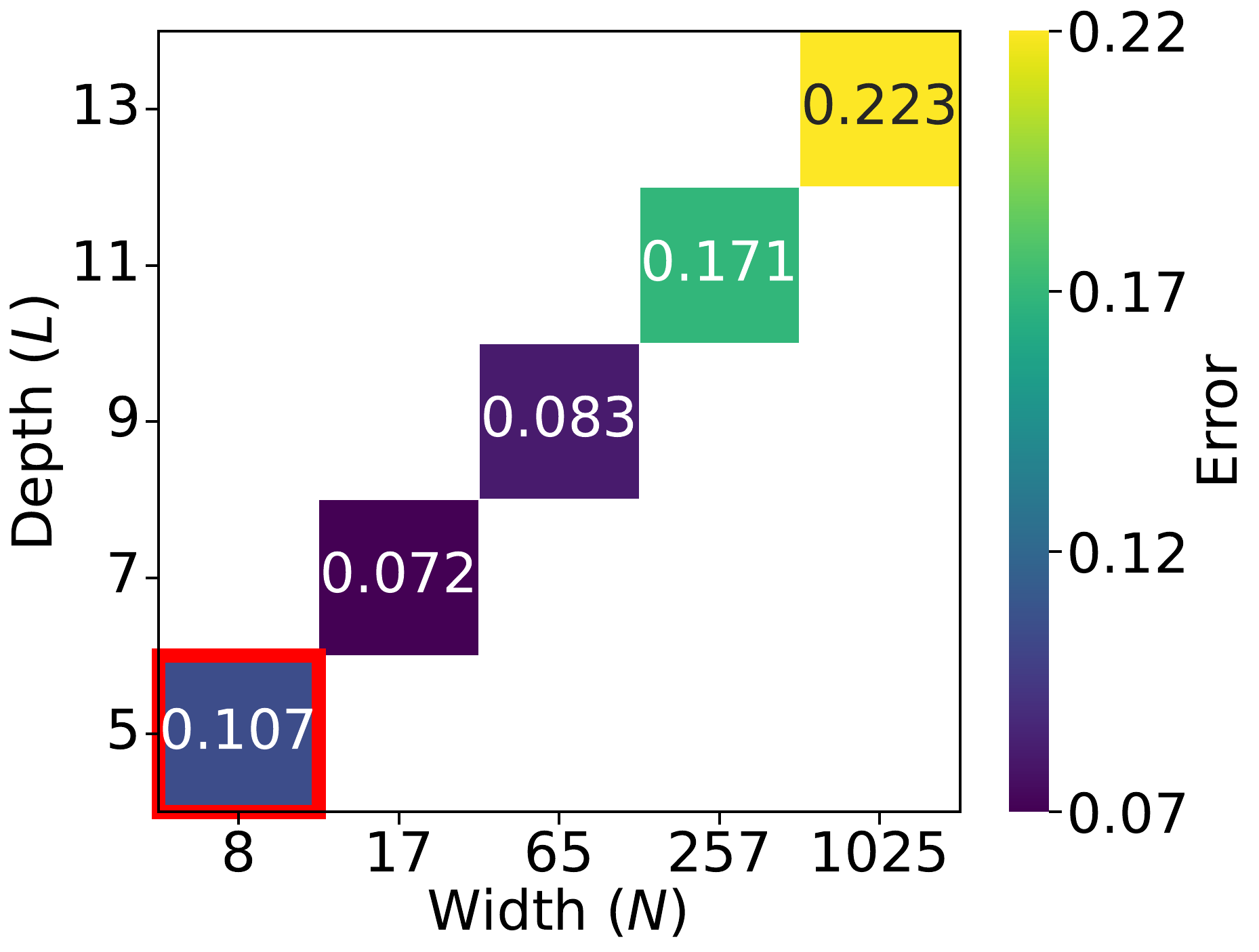}
  \caption{$\delta=0.002$}
  \label{fig:Ex2_A1_results_delta_0_002}
\end{subfigure}
\begin{subfigure}[t]{0.36\linewidth}
  \centering
  \includegraphics[width=\linewidth]{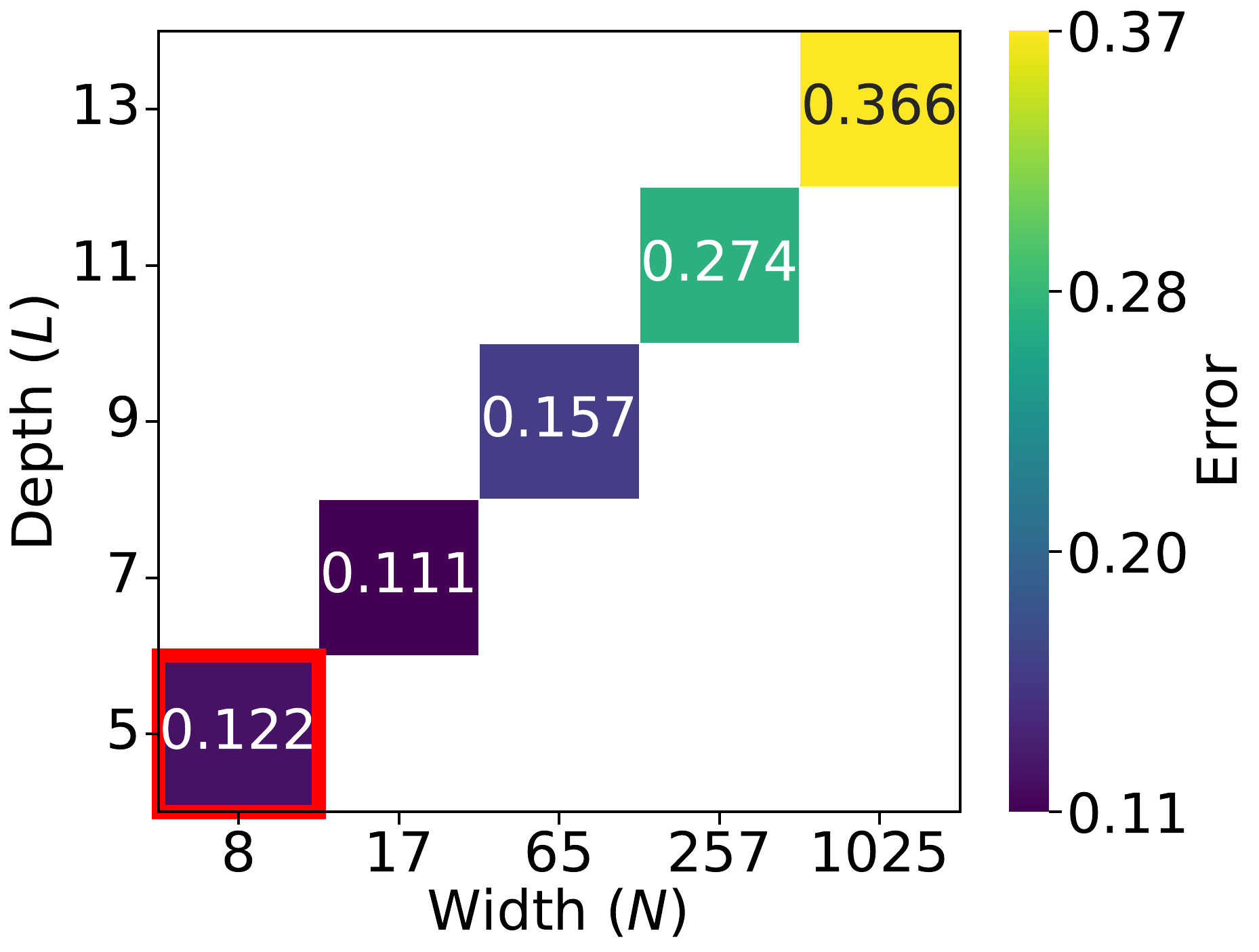}
  \caption{$\delta=0.003$}
  \label{fig:Ex2_A1_results_delta_0_003}
\end{subfigure}

\vspace{0.6ex}

\begin{subfigure}[t]{0.36\linewidth}
  \centering
  \includegraphics[width=\linewidth]{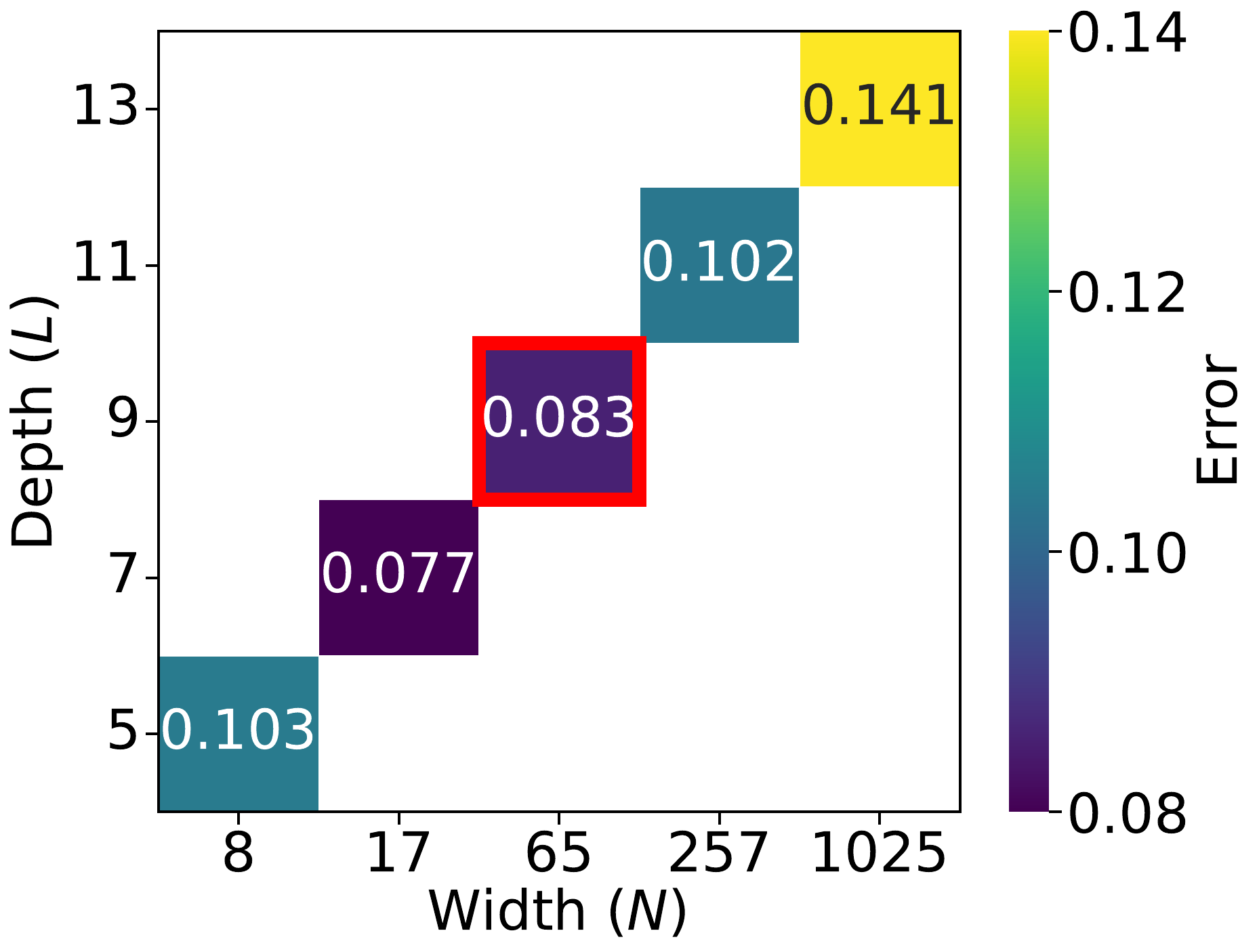}
  \caption{$\delta=0.00002$}
  \label{fig:Ex2_A2_results_delta_0_00002}
\end{subfigure}
\begin{subfigure}[t]{0.36\linewidth}
  \centering
  \includegraphics[width=\linewidth]{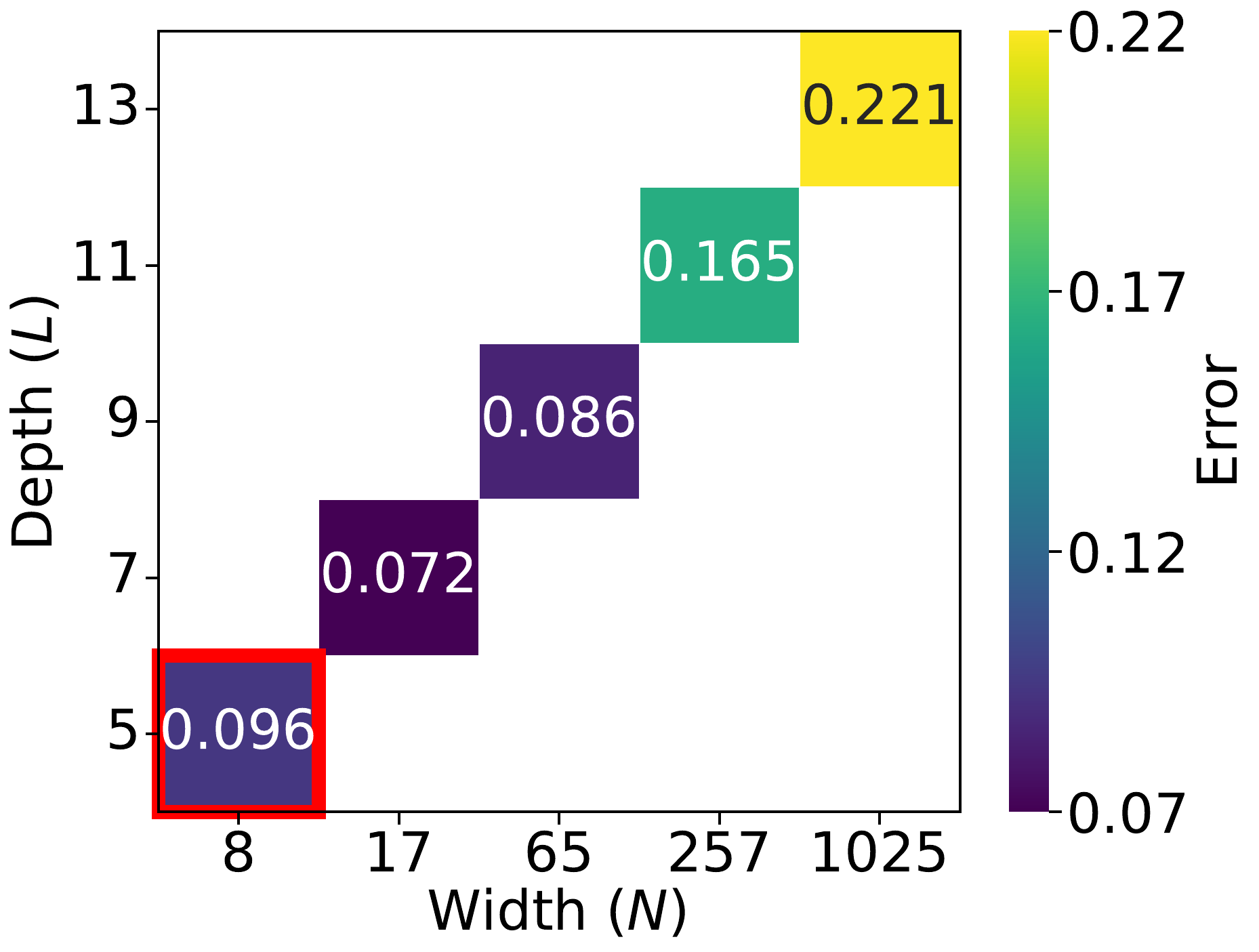}
  \caption{$\delta=0.0005$}
  \label{fig:Ex2_A2_results_delta_0_0005}
\end{subfigure}
\begin{subfigure}[t]{0.36\linewidth}
  \centering
  \includegraphics[width=\linewidth]{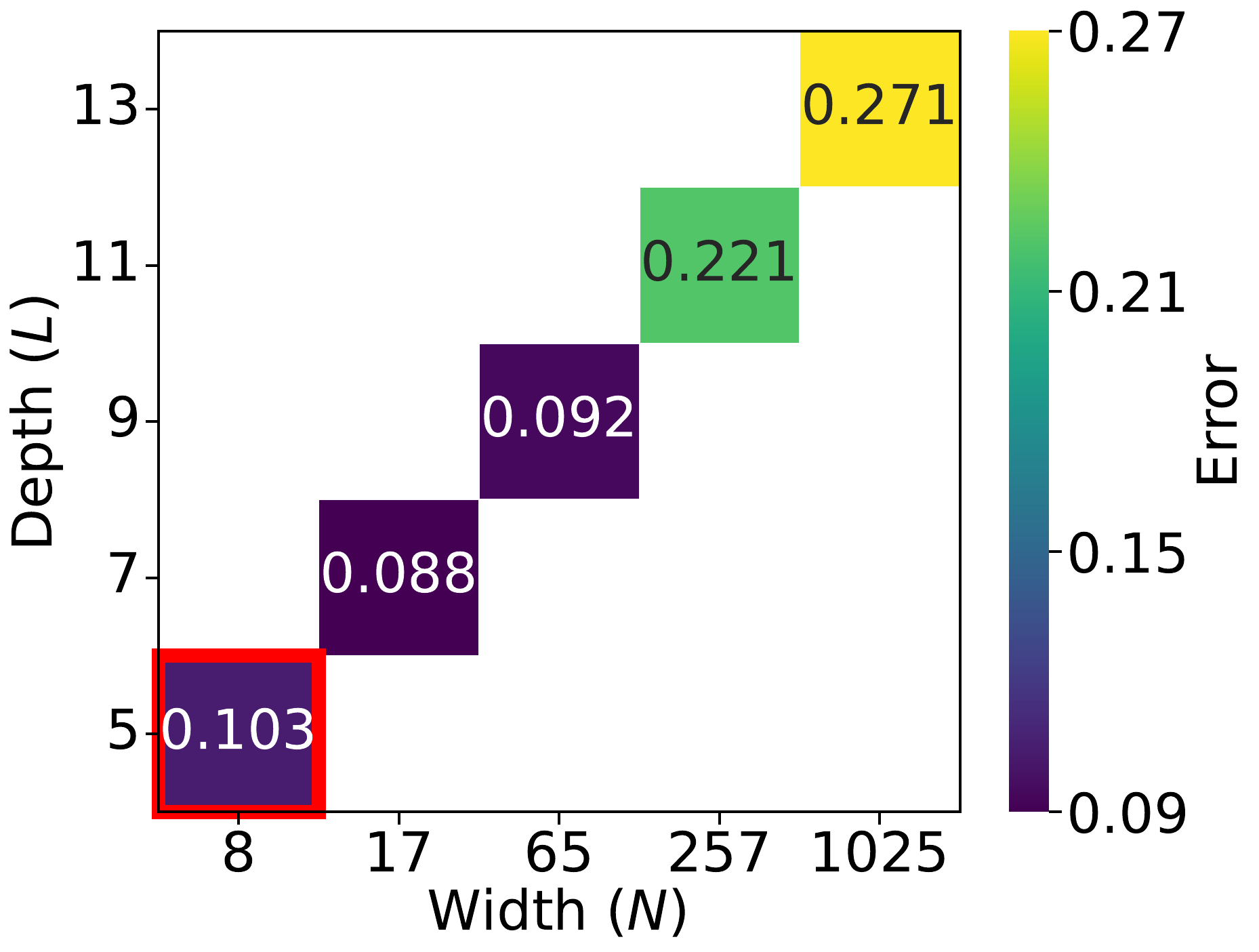}
  \caption{$\delta=0.002$}
  \label{fig:Ex2_A2_results_delta_0_002}
\end{subfigure}
\begin{subfigure}[t]{0.36\linewidth}
  \centering
  \includegraphics[width=\linewidth]{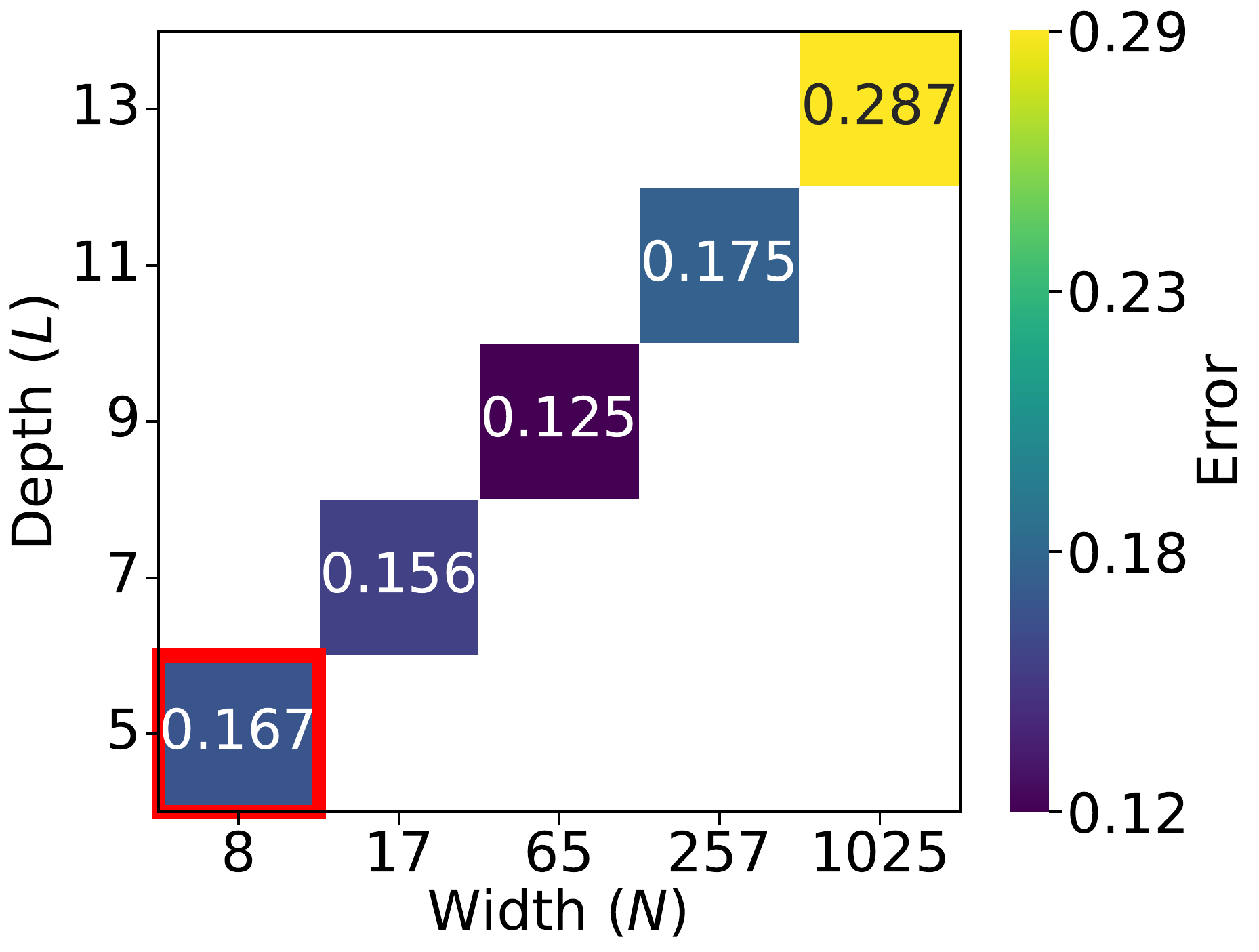}
  \caption{$\delta=0.003$}
  \label{fig:Ex2_A2_results_delta_0_003}
\end{subfigure}

\caption{Example \ref{ex:heat}: Relative \(L^2\) error \(e_f\) of the predicted \(f\) obtained by Algorithm~\ref{alg:main_algorithm} (first row) and Algorithm~\ref{alg:two_stage} (second row) for different network architectures (\(L\) layers, \(N\) neurons). Each column corresponds to a different noise level \(\delta\) in the data \(g\). The red box marks the first architecture for which the stopping criterion is satisfied.}
\label{fig:heat_delta_grid}
\end{figure}

In the numerical implementation, the full DN map is approximated by finitely many boundary excitations and the corresponding Neumann measurements. We apply voltage patterns to one side of \(\partial\Omega\) at a time and ground the remaining three sides. Writing \(\Gamma_{\mathrm{bottom}}=\{x_2=0\}\), \(\Gamma_{\mathrm{top}}=\{x_2=1\}\), \(\Gamma_{\mathrm{left}}=\{x_1=0\}\), and \(\Gamma_{\mathrm{right}}=\{x_1=1\}\), for each side \(\gamma\in\{\Gamma_{\mathrm{bottom}},\Gamma_{\mathrm{top}},\Gamma_{\mathrm{left}},\Gamma_{\mathrm{right}}\}\) and each frequency index \(\omega\in\{1,2\}\), we define
\begin{equation}
    h^{(\omega,\gamma)}(\boldsymbol{x})=
    \begin{cases}
        \sin(\omega\pi x_1), & \text{if } \boldsymbol{x}\in \gamma \text{ and } \gamma\in\{\Gamma_{\mathrm{bottom}},\Gamma_{\mathrm{top}}\},\\
        \sin(\omega\pi x_2), & \text{if } \boldsymbol{x}\in \gamma \text{ and } \gamma\in\{\Gamma_{\mathrm{left}},\Gamma_{\mathrm{right}}\},\\
        0, & \text{otherwise on } \partial\Omega\setminus \gamma.
    \end{cases}
\end{equation}
This choice preserves continuity at the corner points and yields \(4\times 2=8\) boundary measurements.
\end{example}
\begin{figure}[H]
    \centering
    \includegraphics[width=0.9\linewidth]{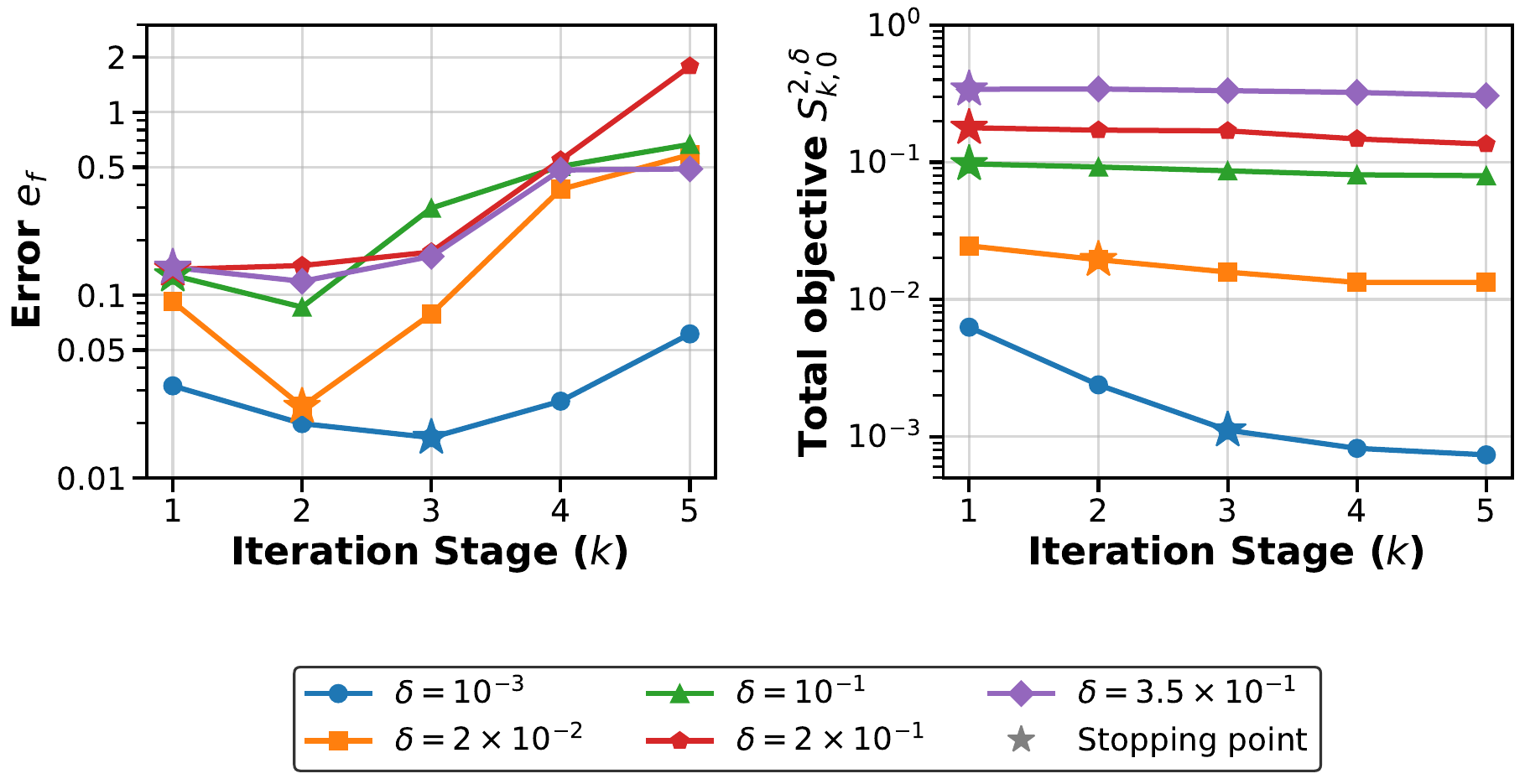}
    \caption{Example \ref{ex:EIT}: Evolution of the relative test error $e_f$ (left) and the stopping criterion value (right) for Algorithm~\ref{alg:two_stage} ($S_{k,0}^{2,\delta}$) under different noise levels $\delta$. Stars indicate the first iteration at which the stopping criterion is satisfied.}
    \label{fig:EIT_error_residual}
\end{figure}
Algorithm~\ref{alg:main_algorithm} is not considered in this example, since the \(L^p\)-based approximation framework in Theorem~\ref{thm:explicit_weights} does not provide a natural regularization space continuously embedded into \(L^\infty(\Omega)\), which is needed to preserve the positivity condition \(f\ge c>0\).

\paragraph{Verification for Algorithm~\ref{alg:two_stage} (unknown bound)}
We set \(\mathcal X_1=W^{1,3}(\Omega)\), \(\mathcal X_0=L^3(\Omega)\), \(\mathcal X_A=L^\infty(\Omega)\), and \(\mathcal Y=\mathcal L(H^{1/2}(\partial\Omega),H^{-1/2}(\partial\Omega))\). By Theorem~\ref{thm:sobolev_approx} with \(s=2\) and \(p=3\), Assumption~\ref{ass:generic_approx} holds for the present example. Assumption~\ref{ass:X1_pivot_general} is satisfied by choosing \(\mathcal X_0=L^3(\Omega)\). Since \(W^{1,3}(\Omega)\hookrightarrow L^\infty(\Omega)\) continuously, conductivities sufficiently close to \(f^\dagger\) in \(W^{1,3}(\Omega)\) remain uniformly positive, because \(f^\dagger\ge 0.1\) on \(\Omega\). Thus the DN map is well defined on a sufficiently small admissible neighborhood of \(f^\dagger\), which verifies Assumption~\ref{ass:forward_full}(i). Moreover, the compact embedding \(W^{1,3}(\Omega)\hookrightarrow\hookrightarrow L^\infty(\Omega)\) implies that weak convergence in \(W^{1,3}(\Omega)\) yields strong convergence in \(L^\infty(\Omega)\). Together with the local Lipschitz continuity of the DN map with respect to the \(L^\infty\)-norm on uniformly elliptic conductivities, this gives Assumption~\ref{ass:forward_full}(ii). Finally, let \(C_A>0\) denote the local Lipschitz constant of the forward map in \(L^\infty(\Omega)\), and let \(C_{\mathrm{emb}}\) denote the embedding constant of \(W^{1,3}(\Omega)\hookrightarrow L^\infty(\Omega)\). Then, for \(f\) sufficiently close to \(f^\dagger\), we have
\[
\|A(f)-A(f^\dagger)\|_{\mathcal Y}
\le C_A\|f-f^\dagger\|_{L^\infty(\Omega)}
\le C_A C_{\mathrm{emb}}\|f-f^\dagger\|_{W^{1,3}(\Omega)}.
\]
Hence Assumption~\ref{ass:forward_full}(iv) holds with \(\theta=1\) and \(L_A=C_A C_{\mathrm{emb}}\). The uniqueness condition in Assumption~\ref{ass:forward_full}(iii) follows from the classical uniqueness result for the Calder\'on problem in two dimensions.

\begin{figure}[htbp]
  \centering
  \includegraphics[width=\linewidth]{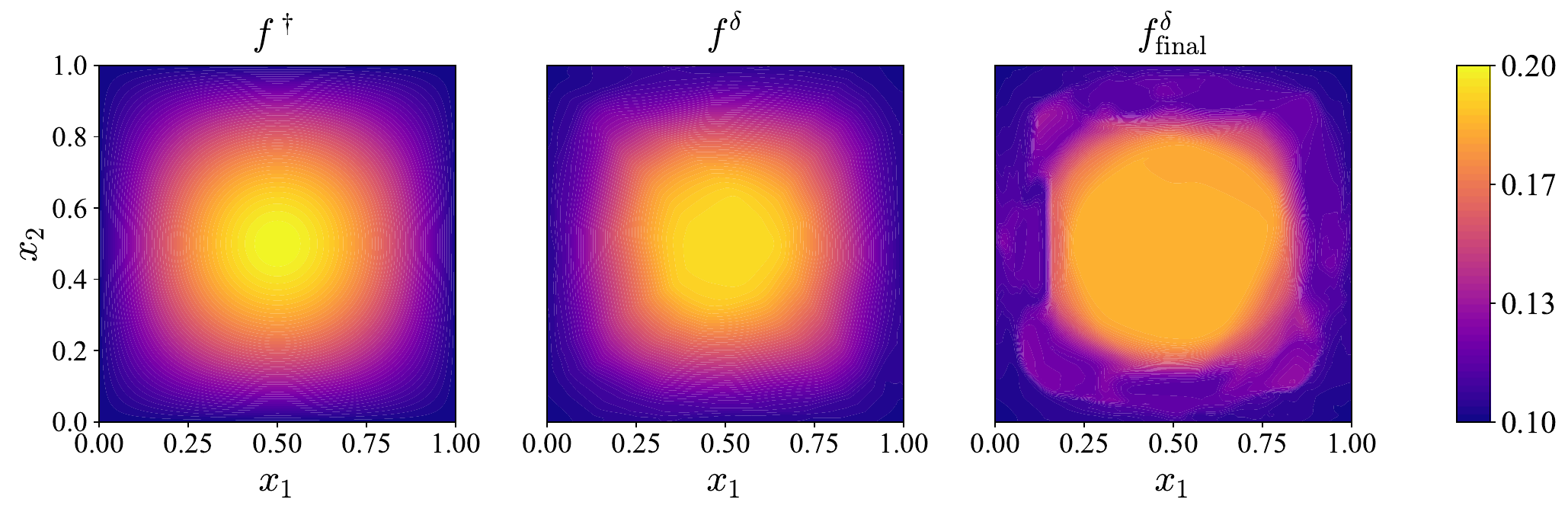}
  \caption{Example \ref{ex:EIT}: True solution \(f^\dagger\) and the corresponding reconstructions \(f^\delta\) and \(f_{\text{final}}^\delta\) obtained by Algorithm~\ref{alg:main_algorithm} (first row) and Algorithm~\ref{alg:two_stage} (second row) for \(\delta = 0.001\). Here, \(f_{\text{final}}^\delta\) denotes the reconstruction at the final expansion stage (\(k=5\)).}
  \label{fig:EIT_reconstructions}
\end{figure}

We evaluate Algorithm~\ref{alg:main_algorithm} and Algorithm~\ref{alg:two_stage} under various noise levels \(\delta\). Although the absolute noise bounds \(\delta\) may appear small, the corresponding relative noise levels \(\|g^\delta-g\|_{\mathcal Y}/\|g\|_{\mathcal Y}\) can still be substantial relative to the norm of the exact data \(g\). The chosen \(\delta\)-values therefore span a broad range of noise regimes. In Example~\ref{ex:conv}, \(\delta \in [10^{-4},\,3\times10^{-2}]\) gives relative noise levels from \(0.22\%\) to \(50\%\). The corresponding ranges are \(0.14\%\) to \(50\%\) in Example~\ref{ex:heat}, with \(\delta \in [2\times10^{-5},\,6\times10^{-3}]\), and \(0.14\%\) to \(50\%\) in Example~\ref{ex:EIT}, with \(\delta \in [10^{-3},\,0.35]\).

To validate the theoretical guarantees in Theorem~\ref{maintheorem}, we examine the algorithms from four perspectives. Although the stopping criterion may already be satisfied at an earlier stage, in all experiments we still run the full five-stage expansion process so as to show the complete evolution of the algorithms and to provide a more comprehensive demonstration of their effectiveness. Specifically, we report: (i) the trajectories of errors and stopping quantities to verify finite termination; (ii) architecture heatmaps to reveal noise-dependent structural adaptation; (iii) visual reconstructions to demonstrate effective noise suppression; and (iv) empirical convergence rates as \(\delta \to 0\).

\begin{figure}[H]
\centering

\begin{subfigure}[t]{0.36\linewidth}
  \centering
  \includegraphics[width=\linewidth]{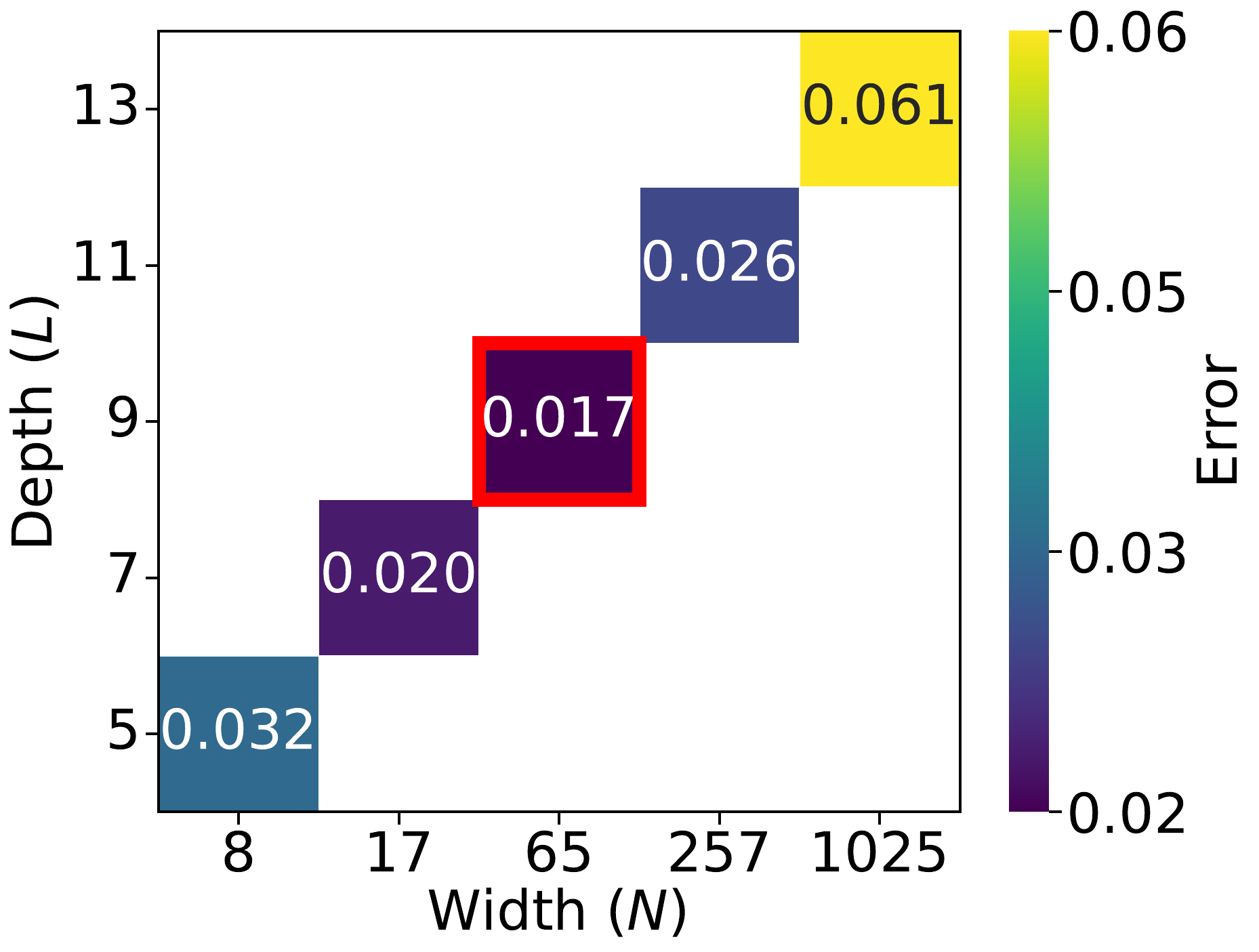}
  \caption{$\delta=0.001$}
  \label{fig:EIT_results_delta_0_001}
\end{subfigure}
\begin{subfigure}[t]{0.36\linewidth}
  \centering
  \includegraphics[width=\linewidth]{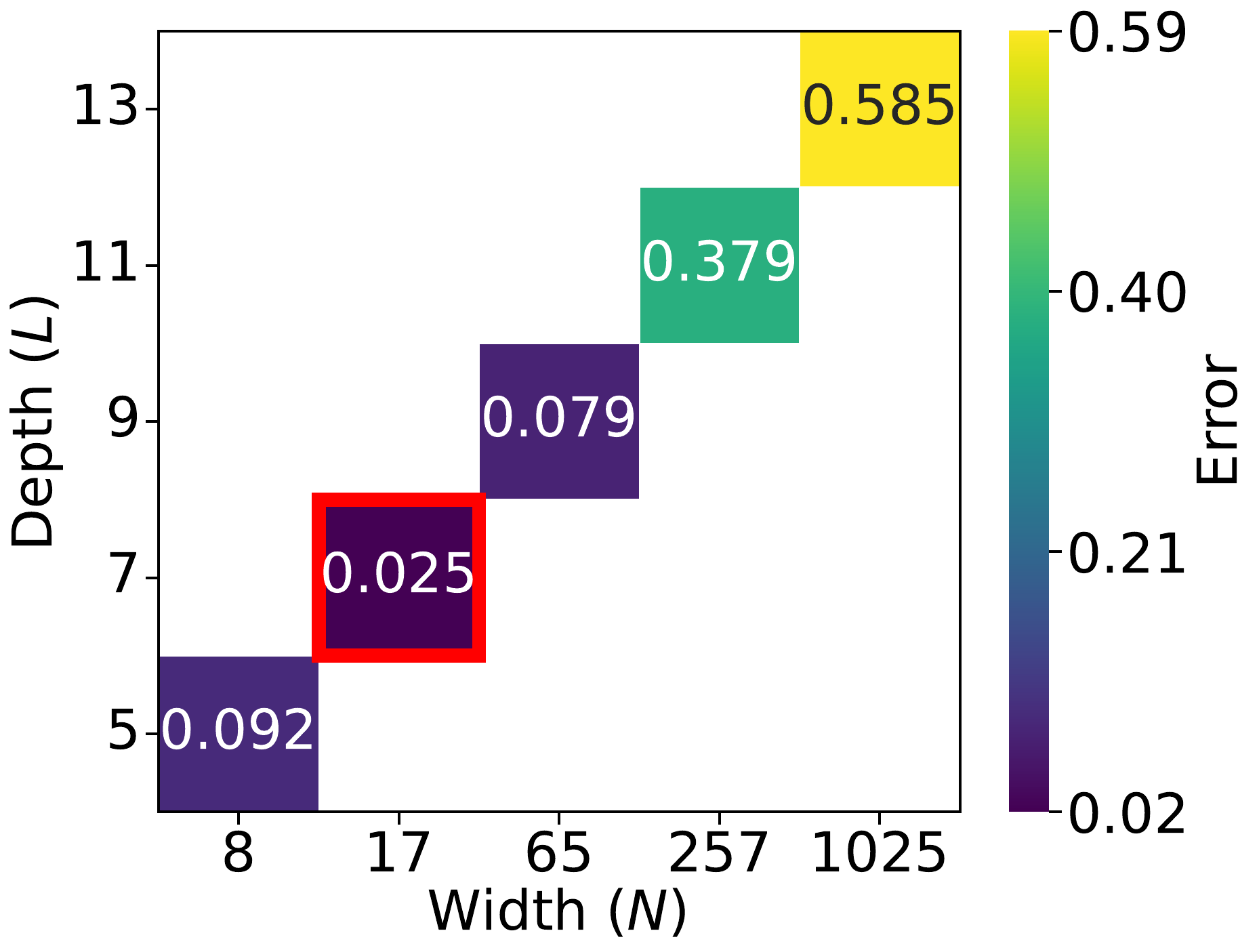}
  \caption{$\delta=0.02$}
  \label{fig:EIT_results_delta_0_02}
\end{subfigure}
\begin{subfigure}[t]{0.36\linewidth}
  \centering
  \includegraphics[width=\linewidth]{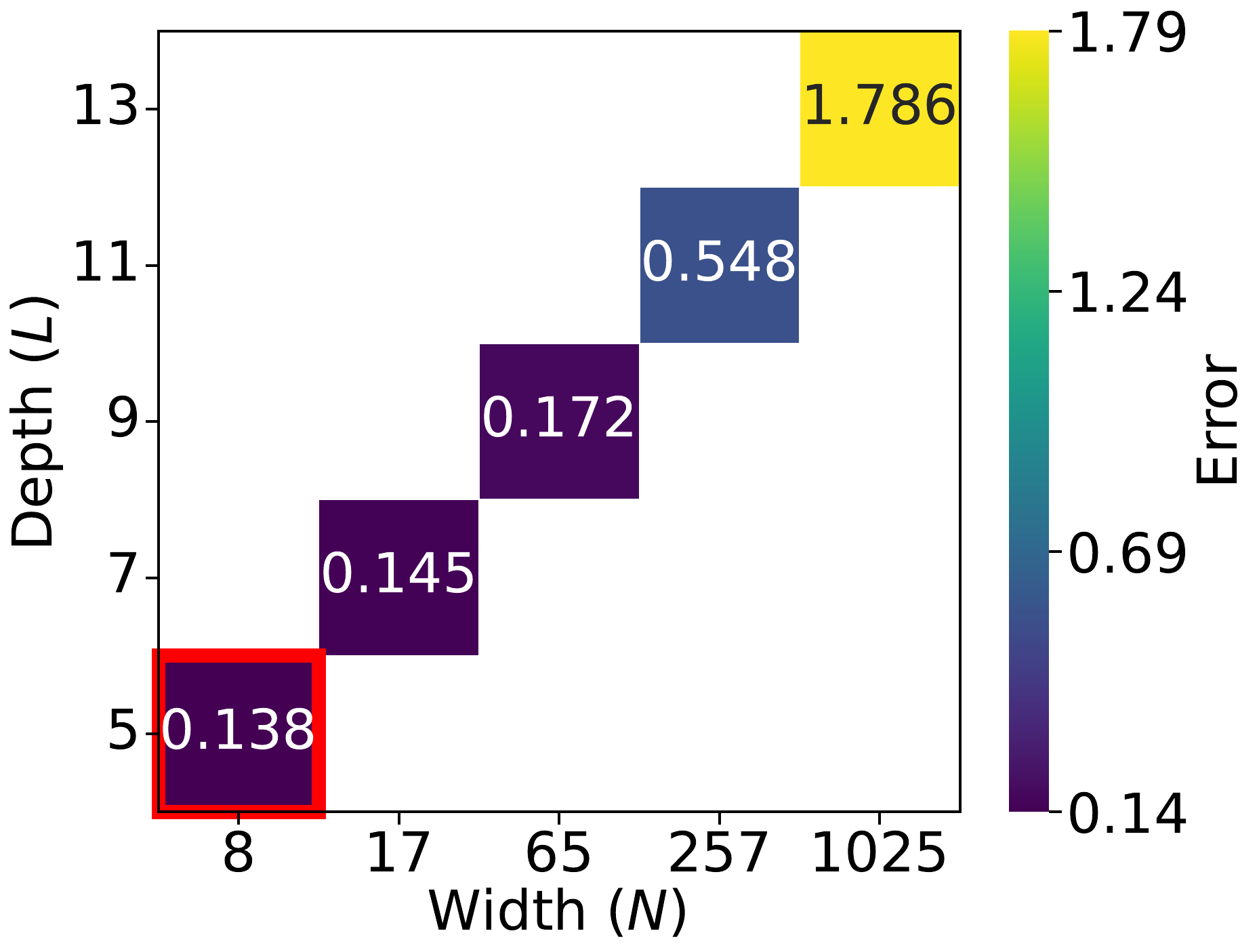}
  \caption{$\delta=0.2$}
  \label{fig:EIT_results_delta_0_2}
\end{subfigure}
\begin{subfigure}[t]{0.36\linewidth}
  \centering
  \includegraphics[width=\linewidth]{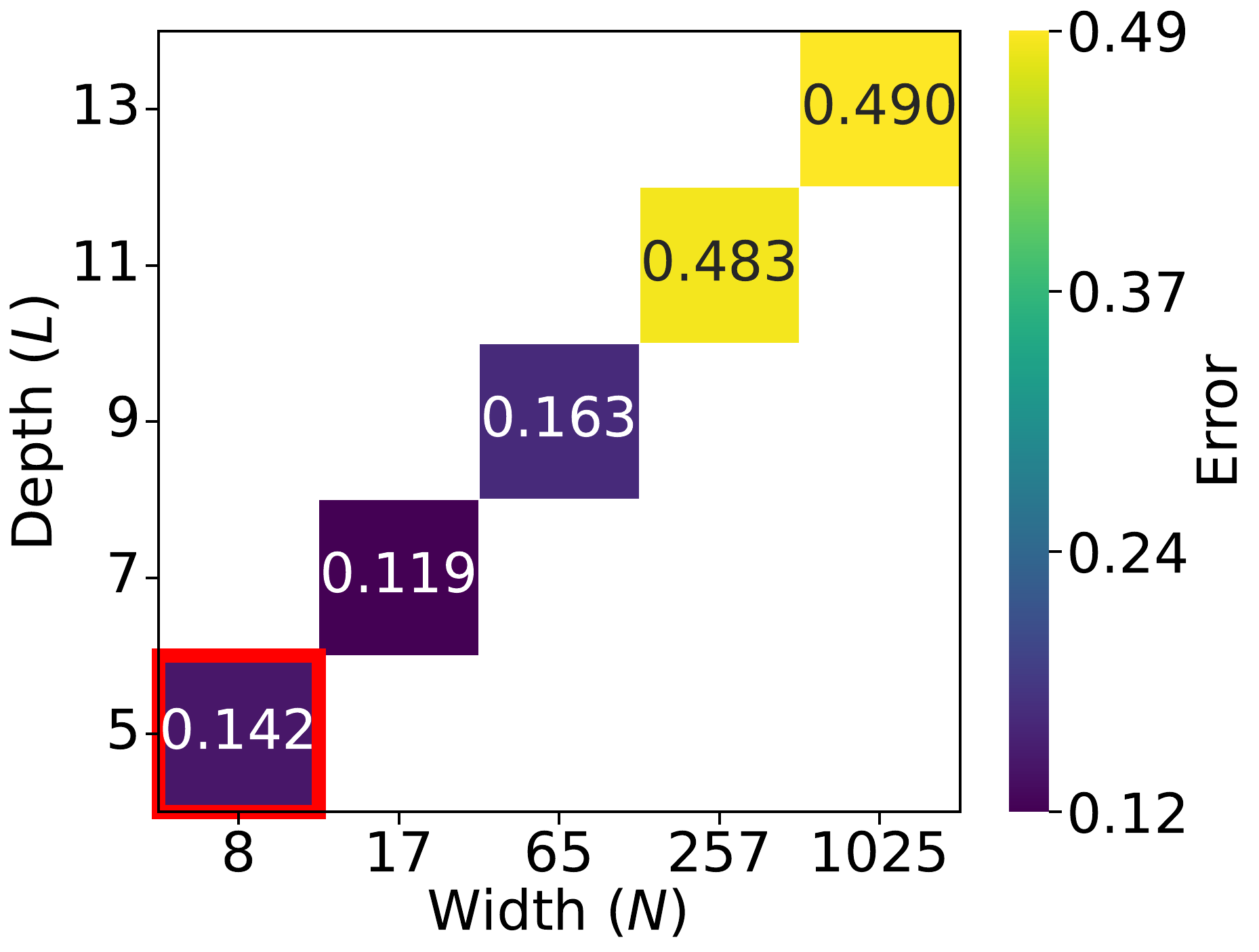}
  \caption{$\delta=0.35$}
  \label{fig:EIT_results_delta_0_35}
\end{subfigure}

\caption{Example \ref{ex:EIT}: Relative \(L^2\) error \(e_f\) of the predicted \(f\) obtained by Algorithm~\ref{alg:two_stage} for different network architectures (\(L\) layers, \(N\) neurons). Each column corresponds to a different noise level \(\delta\) in the data \(g\). The red box marks the first architecture for which the stopping criterion is satisfied.}
\label{fig:EIT_delta_grid}
\end{figure}

We first examine the finite termination property and the noise-dependent architecture complexity predicted by Theorem~\ref{maintheorem}(b). The trajectory plots (Figs.~\ref{fig:deconv_error_residuals}, \ref{fig:heat_error_residual}, \ref{fig:EIT_error_residual}) display the relative test error \(e_f\) together with the discrepancy-based stopping quantities \(S_k^{1,\delta}\) and \(S_{k,0}^{2,\delta}\). In all three examples, these quantities decrease overall and cross the discrepancy threshold after finitely many steps, confirming the stopping behavior of the proposed schemes. At the same time, the error curves exhibit the classical semi-convergence phenomenon of ill-posed inverse problems: for low to moderate noise levels, the error typically decreases at first and then increases once the architecture becomes overly rich, while for heavily contaminated data it may increase from the first iteration stage. This behavior highlights the necessity of early stopping. The same overfitting phenomenon is visible in both the reconstructions (Figs.~\ref{fig:deconv_reconstructions}, \ref{fig:heat_reconstructions}, \ref{fig:EIT_reconstructions}) and the architecture heatmaps (Figs.~\ref{fig:deconv_delta_grid}, \ref{fig:heat_delta_grid}, \ref{fig:EIT_delta_grid}). Reconstructions selected by the discrepancy principle preserve the main structural features, whereas over-expanded models develop pronounced spurious oscillations. In the heatmaps, the upper-right region is dominated by yellow, indicating relatively large reconstruction errors associated with excessively wide and deep architectures. By contrast, the blue regions correspond to more stable and accurate reconstructions.

\begin{figure}[H]
  \centering
  \includegraphics[width=0.68\linewidth]{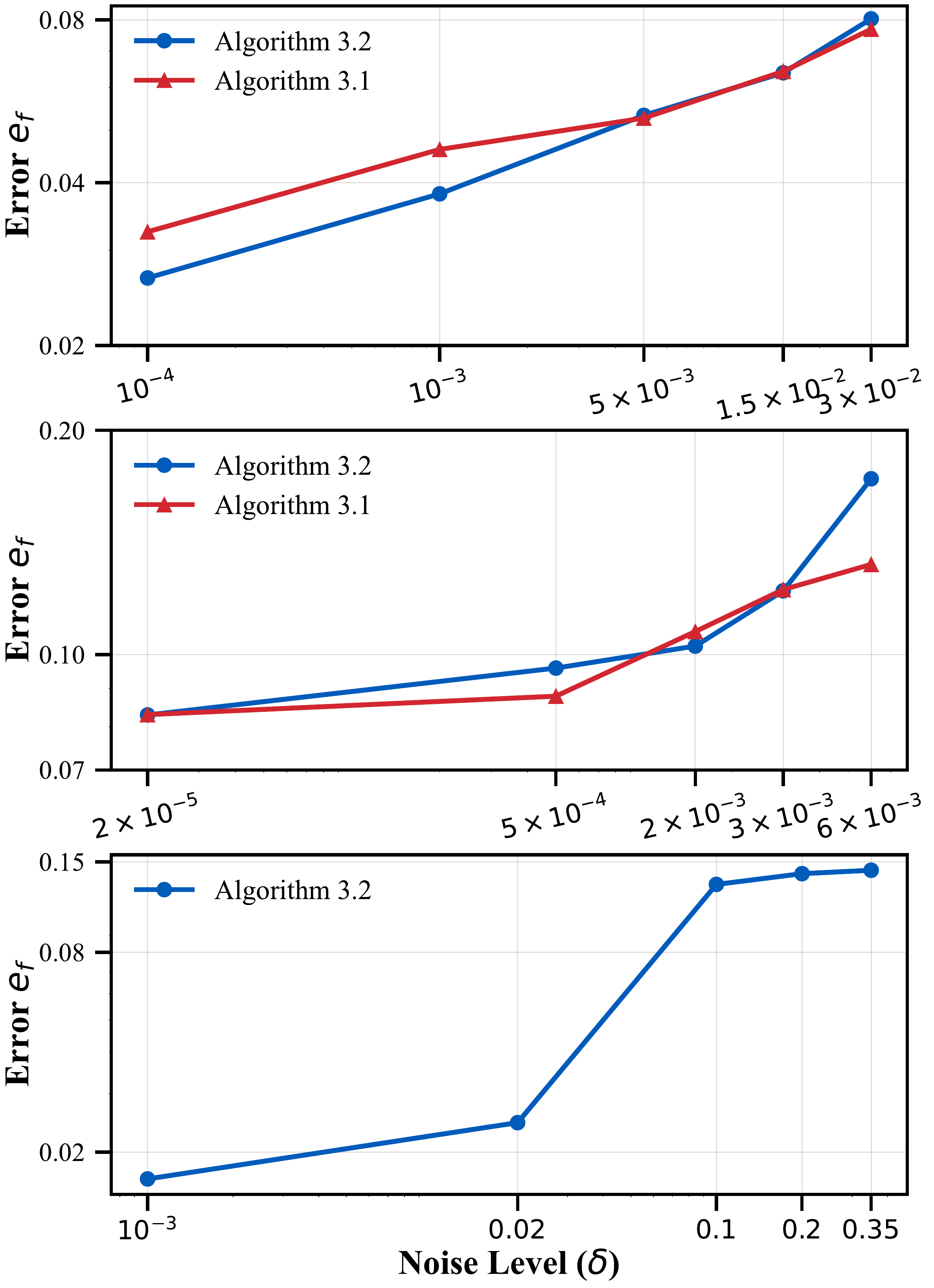}
  \caption{Relative \(L^2\) error \(e_f\) versus the noise level \(\delta\) for Examples~\ref{ex:conv} (top),~\ref{ex:heat} (middle), and~\ref{ex:EIT} (bottom). The error \(e_f\) is evaluated at the first iteration satisfying the stopping criterion.}
  \label{fig:All_Examples_Convergence}
\end{figure}

The selected architecture depends clearly on the noise level: smaller noise permits richer network architectures, while larger noise leads to earlier stopping at smaller widths and depths. This pattern is visible in all three heatmaps, namely Figs.~\ref{fig:deconv_delta_grid}, \ref{fig:heat_delta_grid}, and \ref{fig:EIT_delta_grid}, where the red boxes marking the stopping configurations selected by the discrepancy principle shift systematically toward deeper and wider networks as \(\delta\) decreases. As a concrete illustration, consider Example~\ref{ex:conv}; see Fig.~\ref{fig:deconv_delta_grid}. For the relatively large noise level \(\delta=0.005\), both Algorithm~\ref{alg:main_algorithm} and Algorithm~\ref{alg:two_stage} stop already at \(k=1\) with \((N,L)=(8,5)\). For the much smaller noise level \(\delta=0.0001\), Algorithm~\ref{alg:main_algorithm} expands to \(k=2\) with \((N,L)=(17,7)\), whereas Algorithm~\ref{alg:two_stage} expands to \(k=3\) with \((N,L)=(65,9)\). This empirical trend is consistent with the asymptotic complexity laws in \eqref{eq:scaling_sobolev} and \eqref{eq:scaling_hoelder}. Since all examples are posed in spatial dimension \(d=2\) with \(\theta=1\), and the exact solutions satisfy the assumed regularity (\(\alpha=1\) in the H\"older setting and \(s=2\) in the Sobolev setting), the theoretical bounds reduce to \(L(\delta)=\mathcal O(\log(1/\delta))\) and \(N(\delta)=\mathcal O(\delta^{-2})\) for Algorithm~\ref{alg:main_algorithm}, and to \(N(\delta)L(\delta)=\mathcal O(\delta^{-1}|\log\delta|^2)\) for Algorithm~\ref{alg:two_stage}. These bounds mean that, as the noise level decreases, the admissible stopping architecture may grow at the corresponding rates. The experiments support this prediction: larger noise levels confine the admissible capacity to relatively small architectures, whereas smaller noise levels allow richer models before overfitting emerges.

Finally, we examine the asymptotic convergence behavior as the noise level tends to zero, in accordance with Theorem~\ref{maintheorem}(c). As shown in Fig.~\ref{fig:All_Examples_Convergence}, the relative \(L^2\) error \(e_f\), evaluated at the first iteration satisfying the stopping criterion, is plotted against \(\delta\) on a log-log scale. In all three examples, the plotted relative \(L^2\) error decreases clearly as the noise level \(\delta\) tends to zero, indicating an empirical convergence trend with respect to \(\delta\). These quantitative results support the theoretical conclusion that the regularized reconstructions converge to the exact solution as \(\delta\to0\), while also demonstrating that the proposed expanding framework performs consistently across diverse inverse problems.

\section{Conclusions}
\label{sec:conclusion}

In this paper, we extended architecture-based regularization from shallow to deep neural networks for ill-posed inverse problems. Previous studies typically used the width of a single hidden layer as the sole regularization parameter. Here, we advanced this idea by developing an expanding framework in which both network width and depth increase adaptively. Under the approximation property for deep neural networks, we proposed two discrepancy-principle-based regularization algorithms corresponding to the cases where an explicit \emph{a priori} bound on the parameter radius is available or unavailable. For both methods, we established the existence of regularized minimizers, finite termination of the expansion procedure, asymptotic convergence of the reconstructed solutions as the noise level vanishes, and explicit asymptotic bounds on the terminal network architecture.

By combining these general bounds with concrete approximation results, we further derived explicit scaling laws for the stopping architecture in the H\"older and Sobolev settings. Numerical experiments on several representative inverse problems supported the theoretical analysis from multiple perspectives, including finite termination in practice, noise-dependent structural adaptation, effective noise suppression, and empirical algebraic convergence as the noise level decreases. Overall, these results show that architecture expansion provides a mathematically justified and practically effective regularization strategy for deep neural networks in linear and non-linear inverse problems. Future work includes deriving sharper approximation and parameter-radius estimates for concrete network classes, as well as extending the present framework to more structured architectures, such as convolutional neural networks and transformers.

\begin{appendices}

\section{Proof of Lemma \ref{lem:NN_compact_Lq}}\label{appendix:lem_NN_compact}

\begin{proof}
Fix an admissible network architecture, namely a depth $K\le L$ and widths
$N_1,\ldots,N_K$ satisfying $\max_{1\le \ell\le K}N_\ell\le N$. For this fixed
architecture, the parameter collection $\Theta$ consists of finitely many scalar
weights and biases. The constraint $\|\Theta\|\le r$ therefore defines a closed
and bounded subset of a finite-dimensional parameter space. Since all norms on
finite-dimensional spaces are equivalent, this set is compact by the
Heine--Borel theorem.

For this fixed architecture, consider the realization map $T(\Theta)=\phi(\cdot;\Theta)$. Because $\sigma$ is continuous and the network is obtained by finitely many
compositions of affine maps and $\sigma$, the map
$(x,\Theta)\mapsto \phi(x;\Theta)$ is continuous on the compact set
$\overline{\Omega}\times\{\Theta:\|\Theta\|\le r\}$. Hence it is uniformly
continuous. Therefore, if $\Theta_n\to\Theta$, then
\[
\|\phi(\cdot;\Theta_n)-\phi(\cdot;\Theta)\|_{C(\overline{\Omega})}\to 0.
\]
Thus the realization map is continuous from the compact parameter set into
$C(\overline{\Omega})$. Its image is therefore compact in $C(\overline{\Omega})$. Since $K\le L$ and $1\le N_\ell\le N$, there are only finitely many admissible
architectures. Hence $\mathcal N(N,L,r)$ is a finite union of compact subsets of
$C(\overline{\Omega})$, and is therefore compact in $C(\overline{\Omega})$.

Finally, the embedding $C(\overline{\Omega})\hookrightarrow L^p(\Omega)$ is
continuous for every $p\in[1,\infty)$, since $\|f\|_{L^p(\Omega)}\le |\Omega|^{1/p}\|f\|_{C(\overline{\Omega})}$. Therefore $\mathcal N(N,L,r)$ is also compact in $L^p(\Omega)$. This completes
the proof.
\end{proof}




\end{appendices}

\section*{Acknowledgments}
This work was funded by the Shenzhen Sci-Tech Fund (No. RCJC20231211090030059), National Key Research and Development Program of China (No. 2025YFE0113400) and National Natural Science Foundation of China (No. W2421102).

\bibliography{references}


\begin{thebibliography}{62}
\ifx \bisbn   \undefined \def \bisbn  #1{ISBN #1}\fi
\ifx \binits  \undefined \def \binits#1{#1}\fi
\ifx \bauthor  \undefined \def \bauthor#1{#1}\fi
\ifx \batitle  \undefined \def \batitle#1{#1}\fi
\ifx \bjtitle  \undefined \def \bjtitle#1{#1}\fi
\ifx \bvolume  \undefined \def \bvolume#1{\textbf{#1}}\fi
\ifx \byear  \undefined \def \byear#1{#1}\fi
\ifx \bissue  \undefined \def \bissue#1{#1}\fi
\ifx \bfpage  \undefined \def \bfpage#1{#1}\fi
\ifx \blpage  \undefined \def \blpage #1{#1}\fi
\ifx \burl  \undefined \def \burl#1{\textsf{#1}}\fi
\ifx \doiurl  \undefined \def \doiurl#1{\url{https://doi.org/#1}}\fi
\ifx \betal  \undefined \def \betal{\textit{et al.}}\fi
\ifx \binstitute  \undefined \def \binstitute#1{#1}\fi
\ifx \binstitutionaled  \undefined \def \binstitutionaled#1{#1}\fi
\ifx \bctitle  \undefined \def \bctitle#1{#1}\fi
\ifx \beditor  \undefined \def \beditor#1{#1}\fi
\ifx \bpublisher  \undefined \def \bpublisher#1{#1}\fi
\ifx \bbtitle  \undefined \def \bbtitle#1{#1}\fi
\ifx \bedition  \undefined \def \bedition#1{#1}\fi
\ifx \bseriesno  \undefined \def \bseriesno#1{#1}\fi
\ifx \blocation  \undefined \def \blocation#1{#1}\fi
\ifx \bsertitle  \undefined \def \bsertitle#1{#1}\fi
\ifx \bsnm \undefined \def \bsnm#1{#1}\fi
\ifx \bsuffix \undefined \def \bsuffix#1{#1}\fi
\ifx \bparticle \undefined \def \bparticle#1{#1}\fi
\ifx \barticle \undefined \def \barticle#1{#1}\fi
\bibcommenthead
\ifx \bconfdate \undefined \def \bconfdate #1{#1}\fi
\ifx \botherref \undefined \def \botherref #1{#1}\fi
\ifx \url \undefined \def \url#1{\textsf{#1}}\fi
\ifx \bchapter \undefined \def \bchapter#1{#1}\fi
\ifx \bbook \undefined \def \bbook#1{#1}\fi
\ifx \bcomment \undefined \def \bcomment#1{#1}\fi
\ifx \oauthor \undefined \def \oauthor#1{#1}\fi
\ifx \citeauthoryear \undefined \def \citeauthoryear#1{#1}\fi
\ifx \endbibitem  \undefined \def \endbibitem {}\fi
\ifx \bconflocation  \undefined \def \bconflocation#1{#1}\fi
\ifx \arxivurl  \undefined \def \arxivurl#1{\textsf{#1}}\fi
\csname PreBibitemsHook\endcsname

\bibitem[\protect\citeauthoryear{Engl et~al.}{1996}]{engl1996regularization}
\begin{bbook}
\bauthor{\bsnm{Engl}, \binits{H.W.}},
\bauthor{\bsnm{Hanke}, \binits{M.}},
\bauthor{\bsnm{Neubauer}, \binits{A.}}:
\bbtitle{Regularization of Inverse Problems}.
\bpublisher{Springer},
\blocation{New York}
(\byear{1996})
\end{bbook}
\endbibitem

\bibitem[\protect\citeauthoryear{Isakov}{2006}]{isakov2006inverse}
\begin{bbook}
\bauthor{\bsnm{Isakov}, \binits{V.}}:
\bbtitle{Inverse Problems for Partial Differential Equations}.
\bpublisher{Springer},
\blocation{New York}
(\byear{2006})
\end{bbook}
\endbibitem

\bibitem[\protect\citeauthoryear{Schuster
  et~al.}{2012}]{schuster2012regularization}
\begin{bbook}
\bauthor{\bsnm{Schuster}, \binits{T.}},
\bauthor{\bsnm{Kaltenbacher}, \binits{B.}},
\bauthor{\bsnm{Hofmann}, \binits{B.}},
\bauthor{\bsnm{Kazimierski}, \binits{K.S.}}:
\bbtitle{Regularization Methods in Banach Spaces}.
\bpublisher{Walter de Gruyter},
\blocation{Berlin}
(\byear{2012})
\end{bbook}
\endbibitem

\bibitem[\protect\citeauthoryear{Tikhonov and
  Arsenin}{1977}]{tikhonov1977solutions}
\begin{bbook}
\bauthor{\bsnm{Tikhonov}, \binits{A.N.}},
\bauthor{\bsnm{Arsenin}, \binits{V.Y.}}:
\bbtitle{Solutions of Ill-Posed Problems}.
\bpublisher{Winston \/ Wiley},
\blocation{Washington, DC \/ New York}
(\byear{1977})
\end{bbook}
\endbibitem

\bibitem[\protect\citeauthoryear{Ito and Jin}{2014}]{ito2014inverse}
\begin{bbook}
\bauthor{\bsnm{Ito}, \binits{K.}},
\bauthor{\bsnm{Jin}, \binits{B.}}:
\bbtitle{Inverse Problems: Tikhonov Theory and Algorithms}.
\bpublisher{World Scientific},
\blocation{Singapore}
(\byear{2014})
\end{bbook}
\endbibitem

\bibitem[\protect\citeauthoryear{Gong et~al.}{2020}]{gong2020new}
\begin{barticle}
\bauthor{\bsnm{Gong}, \binits{R.}},
\bauthor{\bsnm{Hofmann}, \binits{B.}},
\bauthor{\bsnm{Zhang}, \binits{Y.}}:
\batitle{A new class of accelerated regularization methods, with application to
  bioluminescence tomography}.
\bjtitle{Inverse Probl.}
\bvolume{36}(\bissue{5}),
\bfpage{055013}
(\byear{2020})
\end{barticle}
\endbibitem

\bibitem[\protect\citeauthoryear{Jin and Kereta}{2023}]{jin2023convergence}
\begin{barticle}
\bauthor{\bsnm{Jin}, \binits{B.}},
\bauthor{\bsnm{Kereta}, \binits{{\v{Z}}.}}:
\batitle{On the convergence of stochastic gradient descent for linear inverse
  problems in banach spaces}.
\bjtitle{SIAM J. Imaging Sci.}
\bvolume{16}(\bissue{2}),
\bfpage{671}--\blpage{705}
(\byear{2023})
\end{barticle}
\endbibitem

\bibitem[\protect\citeauthoryear{Zhang and Chen}{2023}]{zhang2023stochastic}
\begin{barticle}
\bauthor{\bsnm{Zhang}, \binits{Y.}},
\bauthor{\bsnm{Chen}, \binits{C.}}:
\batitle{Stochastic asymptotical regularization for linear inverse problems}.
\bjtitle{Inverse Probl.}
\bvolume{39}(\bissue{1}),
\bfpage{015007}
(\byear{2023})
\end{barticle}
\endbibitem

\bibitem[\protect\citeauthoryear{Jin et~al.}{2025}]{jin2025regularizing}
\begin{bchapter}
\bauthor{\bsnm{Jin}, \binits{B.}},
\bauthor{\bsnm{Xia}, \binits{Y.}},
\bauthor{\bsnm{Zhou}, \binits{Z.}}:
\bctitle{On the regularizing property of stochastic iterative methods for
  solving inverse problems}.
In: \bbtitle{Handbook of Numerical Analysis}
vol. \bseriesno{26},
pp. \bfpage{211}--\blpage{272}.
\bpublisher{Elsevier},
\blocation{Amsterdam}
(\byear{2025})
\end{bchapter}
\endbibitem

\bibitem[\protect\citeauthoryear{Natterer}{2001}]{natterer2001mathematics}
\begin{bbook}
\bauthor{\bsnm{Natterer}, \binits{F.}}:
\bbtitle{The Mathematics of Computerized Tomography}.
\bpublisher{SIAM},
\blocation{Philadelphia}
(\byear{2001})
\end{bbook}
\endbibitem

\bibitem[\protect\citeauthoryear{McCann et~al.}{2017}]{mccann2017convolutional}
\begin{barticle}
\bauthor{\bsnm{McCann}, \binits{M.T.}},
\bauthor{\bsnm{Jin}, \binits{K.H.}},
\bauthor{\bsnm{Unser}, \binits{M.}}:
\batitle{Convolutional neural networks for inverse problems in imaging: {A}
  review}.
\bjtitle{IEEE Signal Process. Mag.}
\bvolume{34}(\bissue{6}),
\bfpage{85}--\blpage{95}
(\byear{2017})
\end{barticle}
\endbibitem

\bibitem[\protect\citeauthoryear{Arridge et~al.}{2019}]{arridge2019solving}
\begin{barticle}
\bauthor{\bsnm{Arridge}, \binits{S.}},
\bauthor{\bsnm{Maass}, \binits{P.}},
\bauthor{\bsnm{{\"O}ktem}, \binits{O.}},
\bauthor{\bsnm{Sch{\"o}nlieb}, \binits{C.-B.}}:
\batitle{Solving inverse problems using data-driven models}.
\bjtitle{Acta Numer.}
\bvolume{28},
\bfpage{1}--\blpage{174}
(\byear{2019})
\end{barticle}
\endbibitem

\bibitem[\protect\citeauthoryear{Ongie et~al.}{2020}]{ongie2020deep}
\begin{barticle}
\bauthor{\bsnm{Ongie}, \binits{G.}},
\bauthor{\bsnm{Jalal}, \binits{A.}},
\bauthor{\bsnm{Metzler}, \binits{C.A.}},
\bauthor{\bsnm{Baraniuk}, \binits{R.G.}},
\bauthor{\bsnm{Dimakis}, \binits{A.G.}},
\bauthor{\bsnm{Willett}, \binits{R.}}:
\batitle{Deep learning techniques for inverse problems in imaging}.
\bjtitle{IEEE J. Sel. Areas Inf. Theory}
\bvolume{1}(\bissue{1}),
\bfpage{39}--\blpage{56}
(\byear{2020})
\end{barticle}
\endbibitem

\bibitem[\protect\citeauthoryear{Scarlett
  et~al.}{2023}]{scarlett2023theoretical}
\begin{barticle}
\bauthor{\bsnm{Scarlett}, \binits{J.}},
\bauthor{\bsnm{Heckel}, \binits{R.}},
\bauthor{\bsnm{Rodrigues}, \binits{M.R.}},
\bauthor{\bsnm{Hand}, \binits{P.}},
\bauthor{\bsnm{Eldar}, \binits{Y.C.}}:
\batitle{Theoretical perspectives on deep learning methods in inverse
  problems}.
\bjtitle{IEEE J. Sel. Areas Inf. Theory}
\bvolume{3}(\bissue{3}),
\bfpage{433}--\blpage{453}
(\byear{2023})
\end{barticle}
\endbibitem

\bibitem[\protect\citeauthoryear{Jin et~al.}{2017}]{jin2017deep}
\begin{barticle}
\bauthor{\bsnm{Jin}, \binits{K.H.}},
\bauthor{\bsnm{McCann}, \binits{M.T.}},
\bauthor{\bsnm{Froustey}, \binits{E.}},
\bauthor{\bsnm{Unser}, \binits{M.}}:
\batitle{Deep convolutional neural network for inverse problems in imaging}.
\bjtitle{IEEE Trans. Image Process.}
\bvolume{26}(\bissue{9}),
\bfpage{4509}--\blpage{4522}
(\byear{2017})
\end{barticle}
\endbibitem

\bibitem[\protect\citeauthoryear{Ronneberger et~al.}{2015}]{ronneberger2015u}
\begin{bchapter}
\bauthor{\bsnm{Ronneberger}, \binits{O.}},
\bauthor{\bsnm{Fischer}, \binits{P.}},
\bauthor{\bsnm{Brox}, \binits{T.}}:
\bctitle{{U-Net: Convolutional Networks for Biomedical Image Segmentation}}.
In: \bbtitle{International Conference on Medical Image Computing and
  Computer-assisted Intervention},
pp. \bfpage{234}--\blpage{241}
(\byear{2015}).
\bcomment{Springer}
\end{bchapter}
\endbibitem

\bibitem[\protect\citeauthoryear{Adler and {\"O}ktem}{2018}]{adler2018learned}
\begin{barticle}
\bauthor{\bsnm{Adler}, \binits{J.}},
\bauthor{\bsnm{{\"O}ktem}, \binits{O.}}:
\batitle{Learned primal-dual reconstruction}.
\bjtitle{IEEE Trans. Med. Imaging}
\bvolume{37}(\bissue{6}),
\bfpage{1322}--\blpage{1332}
(\byear{2018})
\end{barticle}
\endbibitem

\bibitem[\protect\citeauthoryear{Hammernik
  et~al.}{2018}]{hammernik2018learning}
\begin{barticle}
\bauthor{\bsnm{Hammernik}, \binits{K.}},
\bauthor{\bsnm{Klatzer}, \binits{T.}},
\bauthor{\bsnm{Kobler}, \binits{E.}},
\bauthor{\bsnm{Recht}, \binits{M.P.}},
\bauthor{\bsnm{Sodickson}, \binits{D.K.}},
\bauthor{\bsnm{Pock}, \binits{T.}},
\bauthor{\bsnm{Knoll}, \binits{F.}}:
\batitle{Learning a variational network for reconstruction of accelerated {MRI}
  data}.
\bjtitle{Magn. Reson. Med.}
\bvolume{79}(\bissue{6}),
\bfpage{3055}--\blpage{3071}
(\byear{2018})
\end{barticle}
\endbibitem

\bibitem[\protect\citeauthoryear{Bora et~al.}{2017}]{bora2017compressed}
\begin{bchapter}
\bauthor{\bsnm{Bora}, \binits{A.}},
\bauthor{\bsnm{Jalal}, \binits{A.}},
\bauthor{\bsnm{Price}, \binits{E.}},
\bauthor{\bsnm{Dimakis}, \binits{A.G.}}:
\bctitle{Compressed sensing using generative models}.
In: \bbtitle{International Conference on Machine Learning},
pp. \bfpage{537}--\blpage{546}
(\byear{2017}).
\bcomment{PMLR}
\end{bchapter}
\endbibitem

\bibitem[\protect\citeauthoryear{Mardani et~al.}{2018}]{mardani2018deep}
\begin{barticle}
\bauthor{\bsnm{Mardani}, \binits{M.}},
\bauthor{\bsnm{Gong}, \binits{E.}},
\bauthor{\bsnm{Cheng}, \binits{J.Y.}},
\bauthor{\bsnm{Vasanawala}, \binits{S.S.}},
\bauthor{\bsnm{Zaharchuk}, \binits{G.}},
\bauthor{\bsnm{Xing}, \binits{L.}},
\bauthor{\bsnm{Pauly}, \binits{J.M.}}:
\batitle{Deep generative adversarial neural networks for compressive sensing
  {MRI}}.
\bjtitle{IEEE Trans. Med. Imaging}
\bvolume{38}(\bissue{1}),
\bfpage{167}--\blpage{179}
(\byear{2018})
\end{barticle}
\endbibitem

\bibitem[\protect\citeauthoryear{Antun et~al.}{2020}]{antun2020instabilities}
\begin{barticle}
\bauthor{\bsnm{Antun}, \binits{V.}},
\bauthor{\bsnm{Renna}, \binits{F.}},
\bauthor{\bsnm{Poon}, \binits{C.}},
\bauthor{\bsnm{Adcock}, \binits{B.}},
\bauthor{\bsnm{Hansen}, \binits{A.C.}}:
\batitle{On instabilities of deep learning in image reconstruction and the
  potential costs of {AI}}.
\bjtitle{Proc. Natl. Acad. Sci. U. S. A.}
\bvolume{117}(\bissue{48}),
\bfpage{30088}--\blpage{30095}
(\byear{2020})
\end{barticle}
\endbibitem

\bibitem[\protect\citeauthoryear{Li et~al.}{2020}]{li2020nett}
\begin{barticle}
\bauthor{\bsnm{Li}, \binits{H.}},
\bauthor{\bsnm{Schwab}, \binits{J.}},
\bauthor{\bsnm{Antholzer}, \binits{S.}},
\bauthor{\bsnm{Haltmeier}, \binits{M.}}:
\batitle{{NETT}: solving inverse problems with deep neural networks}.
\bjtitle{Inverse Probl.}
\bvolume{36}(\bissue{6}),
\bfpage{065005}
(\byear{2020})
\end{barticle}
\endbibitem

\bibitem[\protect\citeauthoryear{Lunz et~al.}{2018}]{lunz2018adversarial}
\begin{botherref}
\oauthor{\bsnm{Lunz}, \binits{S.}},
\oauthor{\bsnm{{\"O}ktem}, \binits{O.}},
\oauthor{\bsnm{Sch{\"o}nlieb}, \binits{C.-B.}}:
Adversarial regularizers in inverse problems.
Adv. Neural Inf. Process. Syst.
\textbf{31}
(2018)
\end{botherref}
\endbibitem

\bibitem[\protect\citeauthoryear{Lunz}{2022}]{lunz2022learned}
\begin{bchapter}
\bauthor{\bsnm{Lunz}, \binits{S.}}:
\bctitle{Learned regularizers for inverse problems}.
In: \bbtitle{Handbook of Mathematical Models and Algorithms in Computer Vision
  and Imaging: Mathematical Imaging and Vision},
pp. \bfpage{1}--\blpage{21}.
\bpublisher{Springer},
\blocation{New York}
(\byear{2022})
\end{bchapter}
\endbibitem

\bibitem[\protect\citeauthoryear{Kobler et~al.}{2020}]{kobler2020total}
\begin{bchapter}
\bauthor{\bsnm{Kobler}, \binits{E.}},
\bauthor{\bsnm{Effland}, \binits{A.}},
\bauthor{\bsnm{Kunisch}, \binits{K.}},
\bauthor{\bsnm{Pock}, \binits{T.}}:
\bctitle{Total deep variation for linear inverse problems}.
In: \bbtitle{Proceedings of the IEEE/CVF Conference on Computer Vision and
  Pattern Recognition},
pp. \bfpage{7549}--\blpage{7558}
(\byear{2020})
\end{bchapter}
\endbibitem

\bibitem[\protect\citeauthoryear{Jin et~al.}{2020}]{jin2020convergence}
\begin{barticle}
\bauthor{\bsnm{Jin}, \binits{B.}},
\bauthor{\bsnm{Zhou}, \binits{Z.}},
\bauthor{\bsnm{Zou}, \binits{J.}}:
\batitle{On the convergence of stochastic gradient descent for nonlinear
  ill-posed problems}.
\bjtitle{SIAM J. Optim.}
\bvolume{30}(\bissue{2}),
\bfpage{1421}--\blpage{1450}
(\byear{2020})
\end{barticle}
\endbibitem

\bibitem[\protect\citeauthoryear{Long et~al.}{2024}]{long2024accelerated}
\begin{barticle}
\bauthor{\bsnm{Long}, \binits{H.}},
\bauthor{\bsnm{Zhang}, \binits{Y.}},
\bauthor{\bsnm{Gao}, \binits{G.}}:
\batitle{An accelerated inexact newton regularization scheme with a learned
  feature-selection rule for non-linear inverse problems}.
\bjtitle{Inverse Probl.}
\bvolume{40}(\bissue{8}),
\bfpage{085011}
(\byear{2024})
\end{barticle}
\endbibitem

\bibitem[\protect\citeauthoryear{Raissi et~al.}{2019}]{raissi2019physics}
\begin{barticle}
\bauthor{\bsnm{Raissi}, \binits{M.}},
\bauthor{\bsnm{Perdikaris}, \binits{P.}},
\bauthor{\bsnm{Karniadakis}, \binits{G.E.}}:
\batitle{Physics-informed neural networks: A deep learning framework for
  solving forward and inverse problems involving nonlinear partial differential
  equations}.
\bjtitle{J. Comput. Phys.}
\bvolume{378},
\bfpage{686}--\blpage{707}
(\byear{2019})
\end{barticle}
\endbibitem

\bibitem[\protect\citeauthoryear{Karniadakis
  et~al.}{2021}]{karniadakis2021physics}
\begin{barticle}
\bauthor{\bsnm{Karniadakis}, \binits{G.E.}},
\bauthor{\bsnm{Kevrekidis}, \binits{I.G.}},
\bauthor{\bsnm{Lu}, \binits{L.}},
\bauthor{\bsnm{Perdikaris}, \binits{P.}},
\bauthor{\bsnm{Wang}, \binits{S.}},
\bauthor{\bsnm{Yang}, \binits{L.}}:
\batitle{Physics-informed machine learning}.
\bjtitle{Nat. Rev. Phys.}
\bvolume{3}(\bissue{6}),
\bfpage{422}--\blpage{440}
(\byear{2021})
\end{barticle}
\endbibitem

\bibitem[\protect\citeauthoryear{Ulyanov et~al.}{2018}]{ulyanov2018deep}
\begin{bchapter}
\bauthor{\bsnm{Ulyanov}, \binits{D.}},
\bauthor{\bsnm{Vedaldi}, \binits{A.}},
\bauthor{\bsnm{Lempitsky}, \binits{V.}}:
\bctitle{Deep image prior}.
In: \bbtitle{Proceedings of the IEEE Conference on Computer Vision and Pattern
  Recognition},
pp. \bfpage{9446}--\blpage{9454}
(\byear{2018})
\end{bchapter}
\endbibitem

\bibitem[\protect\citeauthoryear{Heckel et~al.}{2019}]{heckel2019deep}
\begin{bchapter}
\bauthor{\bsnm{Heckel}, \binits{R.}}, \betal:
\bctitle{Deep {Decoder}: Concise {Image Representations from Untrained
  Non-convolutional Networks}}.
In: \bbtitle{International Conference on Learning Representations}
(\byear{2019})
\end{bchapter}
\endbibitem

\bibitem[\protect\citeauthoryear{Dittmer
  et~al.}{2020}]{dittmer2020regularization}
\begin{barticle}
\bauthor{\bsnm{Dittmer}, \binits{S.}},
\bauthor{\bsnm{Kluth}, \binits{T.}},
\bauthor{\bsnm{Maass}, \binits{P.}},
\bauthor{\bsnm{Otero~Baguer}, \binits{D.}}:
\batitle{Regularization by architecture: A deep prior approach for inverse
  problems}.
\bjtitle{J. Math. Imaging Vis.}
\bvolume{62}(\bissue{3}),
\bfpage{456}--\blpage{470}
(\byear{2020})
\end{barticle}
\endbibitem

\bibitem[\protect\citeauthoryear{Buskulic
  et~al.}{2024}]{buskulic2024convergence}
\begin{barticle}
\bauthor{\bsnm{Buskulic}, \binits{N.}},
\bauthor{\bsnm{Fadili}, \binits{J.}},
\bauthor{\bsnm{Qu{\'e}au}, \binits{Y.}}:
\batitle{Convergence and recovery guarantees of unsupervised neural networks
  for inverse problems}.
\bjtitle{J. Math. Imaging Vis.}
\bvolume{66}(\bissue{4}),
\bfpage{584}--\blpage{605}
(\byear{2024})
\end{barticle}
\endbibitem

\bibitem[\protect\citeauthoryear{Wang et~al.}{2023}]{wang2023early}
\begin{botherref}
\oauthor{\bsnm{Wang}, \binits{H.}},
\oauthor{\bsnm{Li}, \binits{T.}},
\oauthor{\bsnm{Zhuang}, \binits{Z.}},
\oauthor{\bsnm{Chen}, \binits{T.}},
\oauthor{\bsnm{Liang}, \binits{H.}},
\oauthor{\bsnm{Sun}, \binits{J.}}:
Early stopping for deep image prior.
Transact. Mach. Learn. Res.
\textbf{2023}
(2023)
\end{botherref}
\endbibitem

\bibitem[\protect\citeauthoryear{Qayyum et~al.}{2022}]{qayyum2022untrained}
\begin{barticle}
\bauthor{\bsnm{Qayyum}, \binits{A.}},
\bauthor{\bsnm{Ilahi}, \binits{I.}},
\bauthor{\bsnm{Shamshad}, \binits{F.}},
\bauthor{\bsnm{Boussaid}, \binits{F.}},
\bauthor{\bsnm{Bennamoun}, \binits{M.}},
\bauthor{\bsnm{Qadir}, \binits{J.}}:
\batitle{Untrained neural network priors for inverse imaging problems: A
  survey}.
\bjtitle{IEEE Trans. Pattern Anal. Mach. Intell.}
\bvolume{45}(\bissue{5}),
\bfpage{6511}--\blpage{6536}
(\byear{2022})
\end{barticle}
\endbibitem

\bibitem[\protect\citeauthoryear{Cybenko}{1989}]{cybenko1989approximation}
\begin{barticle}
\bauthor{\bsnm{Cybenko}, \binits{G.}}:
\batitle{Approximation by superpositions of a sigmoidal function}.
\bjtitle{Math. Control Signals Syst.}
\bvolume{2}(\bissue{4}),
\bfpage{303}--\blpage{314}
(\byear{1989})
\end{barticle}
\endbibitem

\bibitem[\protect\citeauthoryear{Hornik et~al.}{1989}]{hornik1989multilayer}
\begin{barticle}
\bauthor{\bsnm{Hornik}, \binits{K.}},
\bauthor{\bsnm{Stinchcombe}, \binits{M.}},
\bauthor{\bsnm{White}, \binits{H.}}:
\batitle{Multilayer feedforward networks are universal approximators}.
\bjtitle{Neural Netw.}
\bvolume{2}(\bissue{5}),
\bfpage{359}--\blpage{366}
(\byear{1989})
\end{barticle}
\endbibitem

\bibitem[\protect\citeauthoryear{Hornik}{1991}]{hornik1991approximation}
\begin{barticle}
\bauthor{\bsnm{Hornik}, \binits{K.}}:
\batitle{Approximation capabilities of multilayer feedforward networks}.
\bjtitle{Neural Netw.}
\bvolume{4}(\bissue{2}),
\bfpage{251}--\blpage{257}
(\byear{1991})
\end{barticle}
\endbibitem

\bibitem[\protect\citeauthoryear{Stinchcombe}{1989}]{stinchcombe1989universal}
\begin{bchapter}
\bauthor{\bsnm{Stinchcombe}}:
\bctitle{Universal approximation using feedforward networks with non-sigmoid
  hidden layer activation functions}.
In: \bbtitle{International 1989 Joint Conference on Neural Networks},
pp. \bfpage{613}--\blpage{617}
(\byear{1989}).
\bcomment{IEEE}
\end{bchapter}
\endbibitem

\bibitem[\protect\citeauthoryear{Pinkus}{1999}]{pinkus1999approximation}
\begin{barticle}
\bauthor{\bsnm{Pinkus}, \binits{A.}}:
\batitle{Approximation theory of the {MLP} model in neural networks}.
\bjtitle{Acta Numer.}
\bvolume{8},
\bfpage{143}--\blpage{195}
(\byear{1999})
\end{barticle}
\endbibitem

\bibitem[\protect\citeauthoryear{Barron}{2002}]{barron2002universal}
\begin{barticle}
\bauthor{\bsnm{Barron}, \binits{A.R.}}:
\batitle{Universal approximation bounds for superpositions of a sigmoidal
  function}.
\bjtitle{IEEE Trans. Inf. Theory}
\bvolume{39}(\bissue{3}),
\bfpage{930}--\blpage{945}
(\byear{2002})
\end{barticle}
\endbibitem

\bibitem[\protect\citeauthoryear{Bach}{2017}]{bach2017breaking}
\begin{barticle}
\bauthor{\bsnm{Bach}, \binits{F.}}:
\batitle{Breaking the curse of dimensionality with convex neural networks}.
\bjtitle{J. Mach. Learn. Res.}
\bvolume{18}(\bissue{19}),
\bfpage{1}--\blpage{53}
(\byear{2017})
\end{barticle}
\endbibitem

\bibitem[\protect\citeauthoryear{Li et~al.}{2024}]{li2024two}
\begin{barticle}
\bauthor{\bsnm{Li}, \binits{Y.}},
\bauthor{\bsnm{Lu}, \binits{S.}},
\bauthor{\bsnm{Math{\'e}}, \binits{P.}},
\bauthor{\bsnm{Pereverzev}, \binits{S.V.}}:
\batitle{Two-layer networks with the {$\mathrm{ReLU}^k$} activation function:
  Barron spaces and derivative approximation}.
\bjtitle{Numer. Math.}
\bvolume{156}(\bissue{1}),
\bfpage{319}--\blpage{344}
(\byear{2024})
\end{barticle}
\endbibitem

\bibitem[\protect\citeauthoryear{Eldan and Shamir}{2016}]{eldan2016power}
\begin{bchapter}
\bauthor{\bsnm{Eldan}, \binits{R.}},
\bauthor{\bsnm{Shamir}, \binits{O.}}:
\bctitle{The power of depth for feedforward neural networks}.
In: \bbtitle{Conference on Learning Theory},
pp. \bfpage{907}--\blpage{940}
(\byear{2016}).
\bcomment{PMLR}
\end{bchapter}
\endbibitem

\bibitem[\protect\citeauthoryear{Telgarsky}{2016}]{telgarsky2016benefits}
\begin{bchapter}
\bauthor{\bsnm{Telgarsky}, \binits{M.}}:
\bctitle{Benefits of depth in neural networks}.
In: \bbtitle{Conference on Learning Theory},
pp. \bfpage{1517}--\blpage{1539}
(\byear{2016}).
\bcomment{PMLR}
\end{bchapter}
\endbibitem

\bibitem[\protect\citeauthoryear{Mhaskar and Poggio}{2016}]{mhaskar2016deep}
\begin{barticle}
\bauthor{\bsnm{Mhaskar}, \binits{H.N.}},
\bauthor{\bsnm{Poggio}, \binits{T.}}:
\batitle{Deep vs. shallow networks: An approximation theory perspective}.
\bjtitle{Anal. Appl.}
\bvolume{14}(\bissue{06}),
\bfpage{829}--\blpage{848}
(\byear{2016})
\end{barticle}
\endbibitem

\bibitem[\protect\citeauthoryear{Poggio et~al.}{2017}]{poggio2017and}
\begin{barticle}
\bauthor{\bsnm{Poggio}, \binits{T.}},
\bauthor{\bsnm{Mhaskar}, \binits{H.}},
\bauthor{\bsnm{Rosasco}, \binits{L.}},
\bauthor{\bsnm{Miranda}, \binits{B.}},
\bauthor{\bsnm{Liao}, \binits{Q.}}:
\batitle{Why and when can deep-but not shallow-networks avoid the curse of
  dimensionality: a review}.
\bjtitle{Int. J. Autom. Comput.}
\bvolume{14}(\bissue{5}),
\bfpage{503}--\blpage{519}
(\byear{2017})
\end{barticle}
\endbibitem

\bibitem[\protect\citeauthoryear{Lu et~al.}{2017}]{lu2017expressive}
\begin{botherref}
\oauthor{\bsnm{Lu}, \binits{Z.}},
\oauthor{\bsnm{Pu}, \binits{H.}},
\oauthor{\bsnm{Wang}, \binits{F.}},
\oauthor{\bsnm{Hu}, \binits{Z.}},
\oauthor{\bsnm{Wang}, \binits{L.}}:
The expressive power of neural networks: A view from the width.
Adv. Neural Inf. Process. Syst.
\textbf{30}
(2017)
\end{botherref}
\endbibitem

\bibitem[\protect\citeauthoryear{Yarotsky}{2017}]{yarotsky2017error}
\begin{barticle}
\bauthor{\bsnm{Yarotsky}, \binits{D.}}:
\batitle{Error bounds for approximations with deep {ReLU} networks}.
\bjtitle{Neural Netw.}
\bvolume{94},
\bfpage{103}--\blpage{114}
(\byear{2017})
\end{barticle}
\endbibitem

\bibitem[\protect\citeauthoryear{Yarotsky}{2018}]{yarotsky2018optimal}
\begin{bchapter}
\bauthor{\bsnm{Yarotsky}, \binits{D.}}:
\bctitle{Optimal approximation of continuous functions by very deep {ReLU}
  networks}.
In: \bbtitle{Conference on Learning Theory},
pp. \bfpage{639}--\blpage{649}
(\byear{2018}).
\bcomment{PMLR}
\end{bchapter}
\endbibitem

\bibitem[\protect\citeauthoryear{Shen et~al.}{2019}]{shen2019nonlinear}
\begin{barticle}
\bauthor{\bsnm{Shen}, \binits{Z.}},
\bauthor{\bsnm{Yang}, \binits{H.}},
\bauthor{\bsnm{Zhang}, \binits{S.}}:
\batitle{Nonlinear approximation via compositions}.
\bjtitle{Neural Netw.}
\bvolume{119},
\bfpage{74}--\blpage{84}
(\byear{2019})
\end{barticle}
\endbibitem

\bibitem[\protect\citeauthoryear{Shen et~al.}{2020}]{shen2020deep}
\begin{barticle}
\bauthor{\bsnm{Shen}, \binits{Z.}},
\bauthor{\bsnm{Yang}, \binits{H.}},
\bauthor{\bsnm{Zhang}, \binits{S.}}:
\batitle{Deep network approximation characterized by number of neurons}.
\bjtitle{Commun. Comput. Phys.}
\bvolume{28}(\bissue{5}),
\bfpage{1768}--\blpage{1811}
(\byear{2020})
\end{barticle}
\endbibitem

\bibitem[\protect\citeauthoryear{Shen et~al.}{2022}]{shen2022optimal}
\begin{barticle}
\bauthor{\bsnm{Shen}, \binits{Z.}},
\bauthor{\bsnm{Yang}, \binits{H.}},
\bauthor{\bsnm{Zhang}, \binits{S.}}:
\batitle{Optimal approximation rate of {ReLU} networks in terms of width and
  depth}.
\bjtitle{J. Math. Pures Appl.}
\bvolume{157},
\bfpage{101}--\blpage{135}
(\byear{2022})
\end{barticle}
\endbibitem

\bibitem[\protect\citeauthoryear{Yarotsky and
  Zhevnerchuk}{2020}]{yarotsky2020phase}
\begin{barticle}
\bauthor{\bsnm{Yarotsky}, \binits{D.}},
\bauthor{\bsnm{Zhevnerchuk}, \binits{A.}}:
\batitle{The phase diagram of approximation rates for deep neural networks}.
\bjtitle{Adv. Neural Inf. Process. Syst.}
\bvolume{33},
\bfpage{13005}--\blpage{13015}
(\byear{2020})
\end{barticle}
\endbibitem

\bibitem[\protect\citeauthoryear{G{\"u}hring et~al.}{2020}]{guhring2020error}
\begin{barticle}
\bauthor{\bsnm{G{\"u}hring}, \binits{I.}},
\bauthor{\bsnm{Kutyniok}, \binits{G.}},
\bauthor{\bsnm{Petersen}, \binits{P.}}:
\batitle{Error bounds for approximations with deep {ReLU} neural networks in
  {$W^{s,p}$} norms}.
\bjtitle{Anal. Appl.}
\bvolume{18}(\bissue{05}),
\bfpage{803}--\blpage{859}
(\byear{2020})
\end{barticle}
\endbibitem

\bibitem[\protect\citeauthoryear{Lu et~al.}{2021}]{lu2021deep}
\begin{barticle}
\bauthor{\bsnm{Lu}, \binits{J.}},
\bauthor{\bsnm{Shen}, \binits{Z.}},
\bauthor{\bsnm{Yang}, \binits{H.}},
\bauthor{\bsnm{Zhang}, \binits{S.}}:
\batitle{Deep network approximation for smooth functions}.
\bjtitle{SIAM J. Math. Anal.}
\bvolume{53}(\bissue{5}),
\bfpage{5465}--\blpage{5506}
(\byear{2021})
\end{barticle}
\endbibitem

\bibitem[\protect\citeauthoryear{Hon and Yang}{2022}]{hon2022simultaneous}
\begin{barticle}
\bauthor{\bsnm{Hon}, \binits{S.}},
\bauthor{\bsnm{Yang}, \binits{H.}}:
\batitle{Simultaneous neural network approximation for smooth functions}.
\bjtitle{Neural Netw.}
\bvolume{154},
\bfpage{152}--\blpage{164}
(\byear{2022})
\end{barticle}
\endbibitem

\bibitem[\protect\citeauthoryear{Petersen and
  Voigtlaender}{2018}]{petersen2018optimal}
\begin{barticle}
\bauthor{\bsnm{Petersen}, \binits{P.}},
\bauthor{\bsnm{Voigtlaender}, \binits{F.}}:
\batitle{Optimal approximation of piecewise smooth functions using deep {ReLU}
  neural networks}.
\bjtitle{Neural Netw.}
\bvolume{108},
\bfpage{296}--\blpage{330}
(\byear{2018})
\end{barticle}
\endbibitem

\bibitem[\protect\citeauthoryear{Jiao et~al.}{2023}]{jiao2023approximation}
\begin{barticle}
\bauthor{\bsnm{Jiao}, \binits{Y.}},
\bauthor{\bsnm{Wang}, \binits{Y.}},
\bauthor{\bsnm{Yang}, \binits{Y.}}:
\batitle{Approximation bounds for norm constrained neural networks with
  applications to regression and {GAN}s}.
\bjtitle{Appl. Comput. Harmon. Anal.}
\bvolume{65},
\bfpage{249}--\blpage{278}
(\byear{2023})
\end{barticle}
\endbibitem

\bibitem[\protect\citeauthoryear{Schmidt-Hieber}{2021}]{schmidt2021kolmogorov}
\begin{barticle}
\bauthor{\bsnm{Schmidt-Hieber}, \binits{J.}}:
\batitle{The {Kolmogorov--Arnold} representation theorem revisited}.
\bjtitle{Neural Netw.}
\bvolume{137},
\bfpage{119}--\blpage{126}
(\byear{2021})
\end{barticle}
\endbibitem

\bibitem[\protect\citeauthoryear{Wang et~al.}{2025}]{wang2025shallow}
\begin{botherref}
\oauthor{\bsnm{Wang}, \binits{L.}},
\oauthor{\bsnm{Zhu}, \binits{Q.}},
\oauthor{\bsnm{Jin}, \binits{B.}},
\oauthor{\bsnm{Zhang}, \binits{Y.}}:
Shallow neural network yields regularization for ill-posed inverse problems.
arXiv preprint arXiv:2511.16171
(2025)
\end{botherref}
\endbibitem

\bibitem[\protect\citeauthoryear{Yang}{2025}]{yang2025optimal}
\begin{botherref}
\oauthor{\bsnm{Yang}, \binits{Y.}}:
On the optimal approximation of {S}obolev and {B}esov functions using deep
  {ReLU} neural networks.
Appl. Comput. Harmon. Anal.,
101797
(2025)
\end{botherref}
\endbibitem

\end{thebibliography}

\end{document}